\documentclass[12pt,reqno]{amsart}
\usepackage[T1]{fontenc}
\usepackage{lmodern}
\usepackage{amsmath,amssymb,amsthm,mathtools}
\usepackage{xcolor,microtype,needspace}
\usepackage[unicode,colorlinks=true,linkcolor=blue!45!black,citecolor=blue!45!black,urlcolor=blue!45!black]{hyperref}
\makeatletter
\renewcommand\part{\@startsection{part}{0}%
  \z@{\linespacing\@plus\linespacing}{.5\linespacing}%
  {\normalfont\bfseries\centering}}
\makeatother
\newcommand{\R}{\mathbb R}
\newcommand{\C}{\mathbb C}

\newcommand{\E}{\mathbb E}

\DeclareMathOperator{\tr}{tr}

\DeclareMathOperator{\Cov}{Cov}
\newtheorem{theorem}{Theorem}[section]
\newtheorem{lemma}[theorem]{Lemma}
\newtheorem{proposition}[theorem]{Proposition}
\newtheorem{corollary}[theorem]{Corollary}
\theoremstyle{definition}

\theoremstyle{remark}
\newtheorem{remark}[theorem]{Remark}
\numberwithin{equation}{section}
\title[Riesz kernels and Jordan rigidity]{Riesz kernels of hyperbolic polynomials:\\ positivity, admissible exponents and Jordan rigidity}
\author{Dongsheng Wei}
\address{Independent researcher}
\email{dongshengwei2025@icloud.com}
\date{September 29, 2026}
\subjclass[2020]{Primary 26A48; Secondary 17C20, 26C10, 32A07, 44A10, 52A40, 53A15}
\keywords{Hyperbolic polynomial, complete monotonicity, Riesz kernel, admissible exponent, Euclidean Jordan algebra, multiplicative Legendre transform}
\hypersetup{pdftitle={Riesz kernels of hyperbolic polynomials: positivity, admissible exponents and Jordan rigidity},pdfauthor={Dongsheng Wei}}
\DeclareMathOperator{\rank}{rank}
\begin{document}
\begin{abstract}
Scott and Sokal asked whether every homogeneous polynomial with the half-plane property has a completely monotone negative power. We answer this question affirmatively and prove the Riesz-positivity conjecture of Micha{\l}ek, Sturmfels, Uhler and Zwiernik, restated by Kozhasov, Micha{\l}ek and Sturmfels: in $n$ variables, every exponent $\alpha\ge4096n^2$ is admissible, independently of the degree and coefficients. For complete hyperbolic polynomials the Riesz density is strictly log-concave, with relative Gaussian error at most $512n^2/\alpha$ and explicit curvature bounds. We characterize admissible exponents by a common spectral Dirichlet law, prove $n\le m+\alpha m(m-1)$ with its equality case, and obtain the sharp degree-dependent gap $0<\alpha<1/(2(m-1))$ whenever the degree-$m$ polynomial has a nonlinear irreducible factor. A nonnegative fourth-order defect of $-\log p$ vanishes at one point precisely for products of positive integer powers of Euclidean Jordan determinants. We classify the corresponding logarithmic Monge--Amp\`ere equation and answer the question of Etingof, Kazhdan and Polishchuk about polynomial multiplicative Legendre transforms within irreducible complete hyperbolic polynomials; the general question has counterexamples, the Clifford quartics of Kogiso and Sato. The classification extends to reducible polynomials under a boundary-visibility hypothesis, and asymptotic common-power formulas for the Riesz densities force exact Jordan formulas.
\end{abstract}
\maketitle
\setcounter{tocdepth}{1}
\noindent\begin{minipage}{\textwidth}
\tableofcontents
\end{minipage}
\par
\medskip
\section{Introduction}\label{sec:intro}
Let $p$ be a homogeneous real polynomial, positive on an open convex cone $C\subset\R^n$. We ask for which exponents $\alpha>0$ there is a positive measure $\mu_\alpha$ on the dual cone such that
\begin{equation}\label{eq:intro-laplace}
 p(x)^{-\alpha}=\int_{C^*}e^{-\langle x,y\rangle}\,d\mu_\alpha(y),
 \qquad x\in C.
\end{equation}
By the Bernstein--Hausdorff--Widder--Choquet theorem, this is equivalent to complete monotonicity of $p^{-\alpha}$ on $C$ \cite{Choquet}, \cite[Theorem~2.2]{SS}. When $p$ is hyperbolic, G\aa rding's construction of the Riesz kernel gives the inverse Laplace transform \cite[Theorem~3.1]{Garding1951}; Atiyah, Bott and G\aa rding developed the associated distributions further in \cite[Lemma~4.15]{ABG}. The question is whether this inverse transform is positive.

Scott and Sokal proved that a polynomial with a completely monotone negative power has the half-plane property \cite[Corollary~2.3]{SS}, and asked whether a homogeneous polynomial with that property could have no completely monotone negative power \cite[Remark~3 following Corollary~2.3]{SS}. Micha{\l}ek, Sturmfels, Uhler and Zwiernik conjectured that every hyperbolic polynomial gives rise to a statistical exponential family with an underlying measure. For complete hyperbolic polynomials, they formulated this as nonnegativity of the Riesz kernel for all sufficiently large exponents \cite[Conjecture~3.5]{MSUZ}. Kozhasov, Micha{\l}ek and Sturmfels restated this conjecture in terms of integral certificates of positivity \cite[Conjecture~4.10]{KMS}. We prove it with a bound depending only on the dimension. Corollary~\ref{cor:exponential-family} describes the resulting exponential family and its moments.

Several classes admit more precise answers. For the determinant on the cone of positive definite real symmetric $r\times r$ matrices, the admissible exponents form the Wallach set
\begin{equation}\label{eq:intro-wallach}
 \left\{0,\frac12,\ldots,\frac{r-1}{2}\right\}
       \cup\left(\frac{r-1}{2},\infty\right).
\end{equation}
This follows from the positivity theory of Riesz distributions on symmetric cones; see \cite[Theorem~1]{Gindikin} and \cite[Theorem~1.3(a)]{SS}. Products of linear forms positive on $C$ have completely monotone negative powers for every positive exponent, by the one-dimensional gamma integral. For elementary symmetric polynomials, Kozhasov, Micha{\l}ek and Sturmfels proved complete monotonicity for all sufficiently large exponents \cite[Theorem~6.4]{KMS}. These representations also connect complete monotonicity with the canonical forms of positive geometries, as studied by Mazzucchelli and Raman \cite{MR}. These positivity results rely on special structure of the polynomial: a definite determinantal representation, a factorization into linear forms, or the specific form of the elementary symmetric polynomials. Such structure is not available for a general hyperbolic polynomial.

Our main result is quantitative. Let $p$ be a homogeneous real polynomial, positive on a hyperbolicity cone $C\subset\R^n$. Write $K=C^*$ for the closed dual cone and
\begin{equation}\label{eq:a-hessian}
 H_p(x)=-D^2\log p(x),\qquad x\in C.
\end{equation}
We call $p$ complete if $\overline C$ contains no line. In this case $H_p$ is positive definite, and the inverse Laplace transform of $p^{-\alpha}$ is a function for the exponents considered below. We denote it by $q_\alpha$.

\Needspace{0.70\textheight}
\begin{theorem}\label{thm:main}
Let $p$ be a complete homogeneous hyperbolic polynomial in $n\geq2$ variables, positive on its hyperbolicity cone $C$, and let $K=C^*$. Assume
\begin{equation}\label{ineq:a-threshold}
 \alpha\geq4096n^2.
\end{equation}
For every $y\in\operatorname{int}K$, there is a unique point $e=e_{\alpha,y}\in C$ such that
\begin{equation}\label{eq:a-saddle}
 y=\alpha\nabla\log p(e).
\end{equation}
Define
\begin{equation}\label{eq:a-Q}
 Q_\alpha(y)=\frac{e^{\langle e,y\rangle}}
 {(2\pi\alpha)^{n/2}p(e)^\alpha\sqrt{\det H_p(e)}},
 \qquad d=\frac{512n^2}{\alpha}.
\end{equation}
Then $q_\alpha$ is a real $C^2$ function supported in $K$, nonnegative everywhere and positive on its interior. Its Gaussian comparison is
\begin{equation}\label{ineq:a-comparison}
 \left|\frac{q_\alpha(y)}{Q_\alpha(y)}-1\right|\leq d.
\end{equation}
Its logarithmic curvature satisfies
\begin{equation}\label{ineq:a-curvature}
 \frac{1-2d}{\alpha}H_p(e)^{-1}
 \preceq-D^2\log q_\alpha(y)
 \preceq\frac{1+4d}{\alpha}H_p(e)^{-1}.
\end{equation}
In particular $q_\alpha$ is strictly log-concave on $\operatorname{int}K$, and $p^{-\alpha}$ is completely monotone on $C$.
\end{theorem}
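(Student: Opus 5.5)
We will write $q_\alpha$ as a Fourier--Laplace integral along the real shift through the saddle point and then run a quantitative saddle-point analysis. The estimates will use only the self-concordance of $-\log p$, whose constants depend on $n$ but not on the degree.

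\emph{Saddle point and integral.} Since $p$ is complete and hyperbolic, $\phi=-\log p$ is a strictly convex barrier on $C$. It is $m$-logarithmically homogeneous, and $-\nabla\phi$ maps $C$ bijectively onto $\operatorname{int}K$. Minimizing $\langle x,y\rangle-\alpha\log p(x)$ over $C$ therefore gives the unique $e$ in \eqref{eq:a-saddle}. Inverting the Laplace transform along the shifted contour $e+i\R^n$ gives
\[
 q_\alpha(y)=(2\pi)^{-n}\int_{\R^n}e^{\langle e+it,y\rangle}p(e+it)^{-\alpha}\,dt .
\]
Here $p(e+it)\neq0$ by hyperbolicity, and the integral converges absolutely once $\alpha$ exceeds a small multiple of $n$. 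The same convergence justifies differentiating twice under the integral, which gives $C^2$ regularity. Support in $K$ follows from the Paley--Wiener argument: for $y\notin K$ one moves $e$ to infinity inside $C$.

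\emph{Gaussian comparison.} Substitute $t=s/\sqrt{\alpha}$ in coordinates where $H_p(e)=I$. By \eqref{eq:a-saddle} the linear term cancels, and the integrand becomes $Q_\alpha$-normalized times
\[
 F(s)=\exp\Bigl(-\alpha\bigl[\log p(e+is/\sqrt\alpha)-\log p(e)-i\langle\nabla\log p(e),s\rangle/\sqrt\alpha\bigr]\Bigr).
\]
Factor $p(e+it)=p(e)\prod_j(1+i\lambda_j)$, where the $\lambda_j$ are the hyperbolic eigenvalues of $t$ with respect to $e$. Then $\sum_j\lambda_j^2=\|t\|^2_{H_p(e)}$, and the quadratic part of $-\log F$ is exactly $|s|^2/2$. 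The remainder is bounded termwise: $|\log(1+ix)-ix+x^2/2|\le|x|^3/3$ gives a cubic error $\alpha^{-1/2}|s|^3$, which vanishes after symmetrization. For large $|s|$ we use $|1+i\lambda|^{-\alpha}=(1+\lambda^2)^{-\alpha/2}$. The key dimension-only input is that at most $n$ of the $\lambda_j$ can be needed to control $\|t\|$, since the rank of $t$ is at most $n$ in the relevant sense. This makes the tail bound $\le C\prod(1+\lambda_j^2)^{-\alpha/2}$ independent of the degree $m$. Integrating the fourth-order Taylor error against the Gaussian gives a relative error $O(n^2/\alpha)$, and we track constants to reach $512n^2/\alpha$.

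\emph{Curvature and conclusion.} Differentiating under the integral gives $\nabla\log q_\alpha$ and $D^2\log q_\alpha$ as a tilted mean and covariance of $e+it$. Equivalently, $-D^2\log q_\alpha(y)=-\Cov_{\nu}(it)$, where $\nu$ is the complex measure above. The same saddle expansion shows that the covariance equals $\alpha^{-1}H_p(e)^{-1}$ up to relative error $O(d)$, which gives \eqref{ineq:a-curvature}.

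Positivity on $\operatorname{int}K$ follows from \eqref{ineq:a-comparison} because $d<1$. Nonnegativity on $\partial K$ follows by continuity, since $q_\alpha$ is $C^2$ and vanishes outside $K$. Complete monotonicity then follows from \eqref{eq:intro-laplace} with $d\mu_\alpha=q_\alpha\,dy$.

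\emph{Main obstacle.} The hard step is the degree-independent bound for the non-Gaussian region $|s|\gtrsim\alpha^{1/6}$. Here we must show that $\prod_j(1+\lambda_j^2)^{-\alpha/2}$ decays like $(1+|s|^2/\alpha)^{-c\alpha}$ using only $n$. Our first attempt is to restrict to the $n$-dimensional space, note that $\sum\lambda_j^2=|s|^2/\alpha$, and use $\prod(1+\lambda_j^2)\ge1+\sum\lambda_j^2$.
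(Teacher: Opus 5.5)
Your framework matches the paper's: the saddle from strict convexity and coercivity, the contour $e+i\R^n$, normalization by $H_p(e)$, positivity from the comparison, continuity on $\partial K$, and the Laplace representation. Moreover, the inequality $\prod_j(1+\lambda_j^2)\ge1+\sum_j\lambda_j^2$ that you call a ``first attempt'' already settles what you label the main obstacle. It gives the degree-free envelope $|F(s)|\le(1+|s|^2/\alpha)^{-\alpha/2}$ on all of $\R^n$, which is exactly the paper's $K_\alpha$. Your ``rank at most $n$'' heuristic is false, since $t=e$ has $m$ nonzero roots, but you do not need it.

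\textbf{The gap: the $n^2$ rate is not obtained.} The real difficulty is the step you pass over: getting relative error $O(n^2/\alpha)$ from the cubic part of the phase.
\begin{itemize}
\item Symmetrization removes only the first-order term $\tfrac{i}{3\sqrt\alpha}P(s)$, where $P(s)=\sum_j\lambda_j(s)^3$ with roots taken for $s$ in your $H_p(e)$-orthonormal coordinates. Its square survives in $\operatorname{Re}F=|F|\cos\theta$, with $\theta$ the phase of $F$, and it is the leading error, of size $\mathbb E P(Z)^2/(18\alpha)$.
\item The only information you record is the pointwise bound $|P(s)|\le|s|^3$. That bound gives $\mathbb E|Z|^6/(18\alpha)=n(n+2)(n+4)/(18\alpha)$, which is of order $n^3/\alpha$.
\item Your local/tail split at $|s|\sim\alpha^{1/6}$ fails for the same reason. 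At $\alpha=4096n^2$ the Gaussian concentrates on $|s|\approx\sqrt n$, and this exceeds $\alpha^{1/6}=4n^{1/3}$ once $n>4096$. So almost all the mass lies in your ``non-Gaussian'' region. There the pointwise phase bound $\alpha^{-1/2}|s|^3\approx\sqrt n/64$ exceeds one and gives no control of the oscillation.
\end{itemize}
As written, your scheme needs $\alpha\gtrsim n^3$, not the stated threshold with $d=512n^2/\alpha$.

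\textbf{The missing idea.} Keep the cubic as a polynomial and control it only in Gaussian $L^2$, with no region split.
\begin{itemize}
\item Use $1-\cos\theta\le\theta^2/2$ globally, with $\theta=P/(3\sqrt\alpha)+R$ and $|R|\le|s|^5/(5\alpha^{3/2})$.
\item Use the sandwich $e^{-|s|^2/2}\le|F|\le(1+|s|^2/\alpha)^{-\alpha/2}$, and integrate polynomials against the envelope exactly. Its moments are Gaussian moments times explicit gamma ratios.
\item The bound $|P|\le|s|^3$ should enter only through polarization, giving $|T(u,v,w)|\le9/2$ for the symmetric tensor $T$ of $P$. Then $\|T\|_F^2\le81n^2/4$ and $|a|\le9n/2$, where $a_i=\sum_jT_{ijj}$.
\item Wick's formula $\mathbb EP(Z)^2=6\|T\|_F^2+9|a|^2$ then gives $O(n^2)$ (Lemma~\ref{lem:cubic}). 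The Lorentz polynomial shows that $n^2/\alpha$ is the true order.
\end{itemize}
The curvature bound has the same issue, so your ``same saddle expansion'' would inherit the $n^3$ loss there too. The second-moment matrix needs the weighted estimate $\mathbb E\langle w,Z\rangle^2P(Z)^2=O(n^2)$. The first moment needs Cauchy--Schwarz together with $\sin^2\theta\le2(1-\cos\theta)$.
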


The bound is uniform over complete hyperbolic polynomials of a given dimension. It yields the same complete-monotonicity conclusion for an arbitrary hyperbolic polynomial after quotienting by the lineality space of its cone. In particular, it answers a question of Scott and Sokal; the half-plane formulation is recorded in Corollary~\ref{cor:half-plane-cm}. The strict log-concavity also gives a concave root of the kernel, since $q_\alpha$ is homogeneous of degree $m\alpha-n$, where $m=\deg p$.

The proof uses the real roots of the restrictions $t\mapsto p(e+tv)$ to normalize the inverse Laplace integral at its saddle. In the Hessian coordinates, the sum of the squares of the root parameters is $\|z\|^2$, so a radial bound for the modulus holds on the whole contour and contains no degree factor. The leading phase is a cubic polynomial whose Gaussian second moment is $O(n^2)$; integrating this polynomial before estimating it is the step that gives an error of order $n^2/\alpha$. The same calculation with one or two additional linear factors controls the derivatives needed for strict log-concavity.

The quadratic dependence on $n$ in the Gaussian comparison is already attained by the Lorentz polynomial $p(x)=x_0^2-x_1^2-\cdots-x_{n-1}^2$. At $y=2\alpha(1,0,\ldots,0)$, the absolute relative error equals $(3n^2-6n+4)/(24\alpha)+O_n(\alpha^{-2})$ as $\alpha\to\infty$ for fixed $n$ (Section~\ref{subsec:lorentz}), although $p^{-\alpha}$ is completely monotone on its hyperbolicity cone for every $\alpha\ge(n-2)/2$ \cite[Theorem~1.9(b)]{SS}.

The admissible exponents also carry algebraic information about $p$. Define
\begin{equation}\label{eq:n-3-1}
G(p)=\{\alpha\ge0:p^{-\alpha}\text{ is completely monotone on }C\}.
\end{equation}
Let $m=\deg p$ and fix $e\in C$. The characteristic roots $\lambda_j^e(v)$ are determined by $p(te-v)=p(e)\prod_j(t-\lambda_j^e(v))$. The compact base of the closed dual cone is $S_e=\{u\in K:u(e)=1\}$.

\begin{theorem}[Spectral probability and the exponent gap]\label{thm:intro-exponents}
Let $p$ be complete, with $n,m\ge2$. For $\alpha>0$, membership in $G(p)$ is equivalent to the existence of a random linear functional $U\in S_e$ whose law satisfies
\[
 U(v)\stackrel d=\sum_{j=1}^m W_j\lambda_j^e(v)
 \quad(v\in\R^n),\qquad W\sim\operatorname{Dir}(\alpha,\ldots,\alpha).
\]
This law is unique and its support has convex hull $S_e$. Every positive $\alpha\in G(p)$ satisfies
\begin{equation}\label{eq:intro-dimension}
 n\le m+\alpha m(m-1).
\end{equation}
If a nonlinear real irreducible factor of $p$ has multiplicity $a$, then
\begin{equation}\label{eq:intro-gap}
 (m-a)\alpha\ge\frac12.
\end{equation}
Consequently, unless $p$ is a product of real linear forms, $G(p)\cap(0,1/(2(m-1)))=\varnothing$. This degree-dependent gap is sharp for every $m\ge2$.
\end{theorem}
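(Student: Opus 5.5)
The plan is to pass from the Laplace representation \eqref{eq:intro-laplace} to a probability law on the base $S_e$ by polar decomposition, and then to identify that law through a Dirichlet--Liouville integral. Suppose $\alpha\in G(p)$ with representing measure $\mu_\alpha$. Homogeneity of $p^{-\alpha}$ of degree $-m\alpha$ forces $\mu_\alpha$ to be homogeneous of degree $m\alpha-n$ (by uniqueness of Laplace transforms). Writing $y=ru$ with $u\in S_e$ and $r>0$, we get
\[
 p(x)^{-\alpha}=\Gamma(m\alpha)\int_{S_e}u(x)^{-m\alpha}\,d\nu(u)
\]
for a finite measure $\nu$ on $S_e$. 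Evaluating at $x=e$ shows that $\nu$ normalizes to a probability law; let $U$ be its random element. On the other side, the classical Dirichlet identity $\E\bigl[(\sum_j W_ja_j)^{-m\alpha}\bigr]=\prod_j a_j^{-\alpha}$ for $W\sim\operatorname{Dir}(\alpha,\ldots,\alpha)$ and $a_j>0$, applied with $a_j=1+s\lambda_j^e(v)$, gives
\[
 \E\Bigl[\bigl(1+s\textstyle\sum_j W_j\lambda_j^e(v)\bigr)^{-m\alpha}\Bigr]=\frac{p(e+sv)^{-\alpha}}{p(e)^{-\alpha}} .
\]
The left side of the polar formula at $x=e+sv$ is $\E[(1+sU(v))^{-m\alpha}]$. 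Both random variables are bounded, so expanding in $s$ near $0$ matches all moments, and hence the laws of $U(v)$ and $\sum_jW_j\lambda_j^e(v)$ agree. Cramér--Wold then determines the law of $U$ uniquely from these one-dimensional laws. Conversely, given such a $U$, the radial formula $d\mu_\alpha=r^{m\alpha-1}dr\,d\nu$ reverses every step, which gives the equivalence.

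For the support claim I would use that the largest characteristic root $\lambda_{\max}^e(v)$ is the support function of $S_e$, by the standard duality between $C$ and $K$ in the normalization $u(e)=1$. A Dirichlet vector can be arbitrarily close to the vertex of the simplex carrying $\lambda_{\max}$, so the essential supremum of $U(v)$ equals $\lambda_{\max}^e(v)$ for every $v$. Two compact convex sets with the same support function coincide, so the convex hull of $\operatorname{supp}U$ is $S_e$.

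Both inequalities would come from a cap-counting argument that compares Dirichlet tail exponents with the geometry of $S_e$. Fix a nonlinear irreducible factor of multiplicity $a$, and a direction $v$ for which the top root comes from that factor and is otherwise simple. The top eigenvalue then carries the Dirichlet weight of $a$ coordinates, a $\mathrm{Beta}(a\alpha,(m-a)\alpha)$ variable. Hence
\[
 P\bigl(U(v)>\lambda_{\max}^e(v)-\varepsilon\bigr)\asymp\varepsilon^{(m-a)\alpha},
\]
with constants locally uniform in $v$. Because the factor is nonlinear, the exposed points of $S_e$ in these directions move along a curved part of $\partial S_e$. Rotating $v$ in a one-parameter family by angles of order $\varepsilon^{1/2}$ therefore produces about $\varepsilon^{-1/2}$ pairwise (almost) disjoint caps of width $\varepsilon$. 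Summing their masses gives $\varepsilon^{-1/2}\varepsilon^{(m-a)\alpha}\lesssim1$, hence $(m-a)\alpha\ge\frac12$, which is \eqref{eq:intro-gap}. Since $a\le m-1$ whenever there is a nonlinear factor, this yields the gap $G(p)\cap(0,1/(2(m-1)))=\varnothing$.

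For \eqref{eq:intro-dimension} I would run the same count in all $n-2$ boundary directions at a point where all roots are simple. Here I expect the main obstacle: the naive count gives only $(m-1)\alpha\ge(n-2)/2$. Reaching $n-m\le\alpha m(m-1)$ likely requires either mixed tail exponents over several roots simultaneously, or a moment/rank computation that uses the fact that $\E[U(v)^2]$ is a combination of the quadratic forms $(\sum_j\lambda_j)^2$ and $\sum_j\lambda_j^2$. I would try the moment route first and treat the equality case by examining when that rank bound is attained. For sharpness of the gap I would test $p=x_0^{m-2}(x_0^2-x_1^2-x_2^2)$ at $\alpha=1/(2(m-1))$, verifying positivity directly from the Dirichlet law.
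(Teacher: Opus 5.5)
Your derivation of the Dirichlet law, its uniqueness, and $\operatorname{conv}(\operatorname{supp}U)=S_e$ is essentially the paper's. That argument uses the polar decomposition, the Dirichlet integral at $a_j=1+s\lambda_j^e(v)$, moment matching, Cram\'er--Wold, and $\lambda_1^e$ as the support function of $S_e$.

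The first genuine gap is in \eqref{eq:intro-gap}. Your cap count needs a direction whose \emph{top} root comes from the nonlinear factor $h$, so $h$ must vanish by itself on a piece of $\partial C$, and this can fail. Take $p=(x_0^2-x_1^2-x_2^2)(x_0^2-4x_1^2)(x_0^2-4x_2^2)$ and $e=(1,0,0)$. The cone $C=\{x_0>2|x_1|,\ x_0>2|x_2|\}$ has $\overline C\setminus\{0\}$ inside the open Lorentz cone. The extreme roots are $v_0\pm2\max(|v_1|,|v_2|)$, so the Lorentz roots $v_0\pm|v'|$ are never extreme and no caps of $S_e$ are attached to $h$. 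The theorem still asserts $5\alpha\ge\frac12$ here.

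The missing idea is to replace caps by slabs around an arbitrary analytic branch $\lambda$ of $h$, and to use the linearity of $U$. Take a segment $v_0+tw$, $t\in I$, on which $|\lambda''|\ge\kappa$ and the other roots stay separated. Merging the $a$ Dirichlet weights at $\lambda(t)$ gives $\Pr(|U(v_0+tw)-\lambda(t)|<\varepsilon)\ge c\varepsilon^{(m-a)\alpha}$. For each realization, $t\mapsto U(v_0)+tU(w)$ is affine, so it stays within $\varepsilon$ of the curved graph only on a set of $t$ of measure at most $8\sqrt{\varepsilon/\kappa}$ (Lemma~\ref{thm:n-5-2}). Fubini then yields $c|I|\varepsilon^{(m-a)\alpha}\le8\sqrt{\varepsilon/\kappa}$. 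Your disjoint caps are the special case in which $\lambda$ is the convex top root.

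Note also that the gap $G(p)\cap(0,1/(2(m-1)))=\varnothing$ follows from $a\ge1$, i.e.\ $m-a\le m-1$. The inequality $a\le m-1$ that you cite points the wrong way.

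For \eqref{eq:intro-dimension} you give no argument, and the proposed count in $n-2$ directions is not a valid intermediate step. The Hessian of the top root typically has rank far below $n-2$; it is $r-1$ for $\operatorname{Sym}_r$. Moreover $(m-1)\alpha\ge(n-2)/2$ is \emph{stronger} than \eqref{eq:intro-dimension}, since $2+2(m-1)\alpha\le m+m(m-1)\alpha$. It is also false: for $\det$ on $\operatorname{Sym}_3$ at $\alpha=\frac12\in G$ one has $n=6$ and $m=3$.

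The moment route does work once you combine it with a pointwise bound on the base.
\begin{itemize}
\item Identify $V^*$ with $V$ through $H_e$ and put $Z=H_e^{-1}U-e/m$, which is $H_e$-orthogonal to $e$.
\item Summing \eqref{eq:n-3-12} over an $H_e$-orthonormal basis of $e^\perp$ gives $\E\|Z\|_e^2=(n-1)/(m(m\alpha+1))$.
\item For $u\in S_e$, the vector $z=H_e^{-1}u-e/m$ satisfies $\|z\|_e^2=u(z)\le\lambda_1^e(z)$.
\item The roots of $z$ sum to zero and their squares sum to $\|z\|_e^2$, so Cauchy--Schwarz gives $\lambda_1^e(z)\le\sqrt{(m-1)/m}\,\|z\|_e$.
\item Hence $\|z\|_e^2\le(m-1)/m$, so $n-1\le(m-1)(m\alpha+1)$, which is \eqref{eq:intro-dimension}.
\end{itemize}

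Your sharpness example $x_0^{m-2}(x_0^2-x_1^2-x_2^2)$ does work at $\alpha=1/(2(m-1))$. However, ``directly from the Dirichlet law'' hides a real computation. Write $U=(1,U')$ with $U'$ in the unit disk. Rotation invariance reduces the claim to showing that $(\alpha)_k\,k!/((m\alpha/2)_k(m\alpha/2+\frac12)_k)$ is a Hausdorff moment sequence for $|U'|^2$. At this $\alpha$ that is the borderline case: an atom at $1$ plus a positive hypergeometric density. The paper instead uses $x^{m-2}(xt-z^2)$. There an explicit positive pushforward measure, or Gindikin's theorem for generalized powers, gives $G(p_m)=\{0\}\cup[1/(2(m-1)),\infty)$.
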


Theorem~\ref{thm:n-3-1} identifies the law as the angular part of a tilted Riesz measure; Theorems~\ref{thm:n-5-1} and~\ref{thm:n-5-3} give the two bounds. Equality in \eqref{eq:intro-dimension} occurs exactly for a simple Euclidean Jordan determinant at the first positive exponent in its Wallach set. The gap follows by comparing the mass of a Dirichlet average near a nonlinear characteristic root with the time that a linear function can spend near a curved graph.

The equality case leads to a local criterion for Jordan structure. At $e\in C$, define a commutative product by $H_e(u\circ v,w)=\tfrac12D^3\log p(e)[u,v,w]$ and its fourth-order defect by
\begin{equation}\label{eq:intro-defect}
 \mathcal D_e(v)=\sum_{j=1}^m\lambda_j^e(v)^4-H_e(v\circ v,v\circ v).
\end{equation}
Here $H_e=-D^2\log p(e)$. Two-dimensional determinantal representations imply $\mathcal D_e\ge0$.

\begin{theorem}[One-point rigidity]\label{thm:intro-rigidity}
Let $p$ be complete. The following conditions are equivalent: $\mathcal D_e$ vanishes identically at some $e\in C$; $H_e\overline C=K$ at some $e\in C$; and, after an invertible real linear change of coordinates,
\begin{equation}\label{eq:intro-jordan}
 p=c\prod_{a=1}^s\Delta_{J_a}^{k_a},\qquad c>0,\quad k_a\in\mathbb Z_{>0},
\end{equation}
where $\R^n=\bigoplus_aJ_a$, the $J_a$ are simple Euclidean Jordan algebras, and each determinant uses only its own variable group. If these conditions hold at one point, they hold at every point of $C$.
\end{theorem}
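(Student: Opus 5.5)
The plan is to prove the cycle (Jordan product) $\Rightarrow$ ($\mathcal D_e\equiv0$ everywhere) $\Rightarrow$ ($H_e\overline C=K$) $\Rightarrow$ (Jordan product), and to get the ``every point'' clause for free from the first implication.

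\textbf{Step 1: Jordan products.} First I check that the three conditions hold for $p=c\prod_a\Delta_{J_a}^{k_a}$. The defect, the product $\circ$ and the cone all split along $\R^n=\bigoplus_aJ_a$, so it suffices to treat one factor $\Delta_J^k$. Homogeneity of the symmetric cone lets me move $e$ to the identity. There $H_e=k\,\tr(uv)$, and the product defined by $\tfrac12D^3\log p$ is the Jordan product (its scaling by $k$ cancels against $H_e$). With $\lambda_j$ denoting the eigenvalues of $v$ in $J$, each repeated $k$ times, the defect becomes
\[
\sum_j\lambda_j(v)^4-k\tr\bigl((v\circ v)^2\bigr)=k\tr v^4-k\tr v^4=0.
\]
Also $H_e\overline C=K$, because the trace form identifies the symmetric cone with its dual. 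Homogeneity under $\operatorname{Aut}(C)$ then gives both conditions at every $e\in C$.

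\textbf{Step 2: $\mathcal D_e\equiv0$ implies $H_e\overline C=K$.} Nonnegativity of $\mathcal D_e$ came from two-dimensional determinantal representations, i.e.\ Helton--Vinnikov restricted to the planes spanned by $e$ and $v$. In that representation $\mathcal D_e(v)$ should be a sum of squares of off-diagonal entries. I would first make this explicit: it should read $\|A B-\text{(its compression)}\|^2$-type terms in the expansion of $\tr$ of the restricted representation. Vanishing then forces each restricted representation to commute with its own square structure.

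The consequence I aim for is a spectral decomposition. For each $v$, the map $L_v:u\mapsto v\circ u$ should act on $\mathrm{span}\{e,v,v^2,\dots\}$ with eigenvalues $\lambda_j^e(v)$. The hoped-for identity is
\[
H_e(v\circ v,v\circ v)=\sum_j\lambda_j^4,
\]
read as equality in a Cauchy--Schwarz inequality for the ``spectral idempotents'' of $v$. If $v\in\overline C$, then $\lambda_j\ge0$, and so $H_e(v,w)=\sum\lambda_j\langle c_j,w\rangle$ with $c_j\in\overline C$ idempotent-like elements. That gives $H_e v\in K$. For the reverse inclusion I would use the involution $x\mapsto-\nabla\log p(x)$ together with self-duality under $H_e$.

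\textbf{Step 3: $H_e\overline C=K$ implies a Jordan product.} This is the main obstacle. With $H_e$ as the inner product, the condition says $C$ is self-dual, so I want to show $C$ is homogeneous. The approach I would try first uses the identity $-\nabla\log p(x)\in\operatorname{int}K$, which realizes the gradient map $C\to\operatorname{int}K=H_e C$. Composing with $H_e^{-1}$ gives a map $C\to C$, and its fixed point and Hessian at $e$ yield the Jordan (Koecher) structure.

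The intended route is to show that the product $\circ$ satisfies the Jordan identity, by differentiating the self-duality relation $H_e\overline C=K$ along $e$. I will need the equalities in Step 2 at all points, so it is probably better to prove Step 3 via Step 2 in the form $H_e\overline C=K\Rightarrow\mathcal D_e\equiv0$: use the extremal rays of $C$ matching under $H_e$, together with the restriction to planes through $e$ and an extreme ray.

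Then $\mathcal D\equiv0$ near $e$ gives the Jordan identity via a fourth-derivative computation. Once $(\R^n,\circ,e)$ is a Euclidean Jordan algebra with cone $C$, I decompose it into simple ideals $J_a$ and compare $p$ with $\prod\Delta_{J_a}$. Both $p$ and $\prod\Delta_{J_a}$ are hyperbolic with the same characteristic roots up to multiplicity, so $p=c\prod\Delta_{J_a}^{k_a}$. Irreducibility of $\Delta_{J_a}$ and the factorisation of $p$ force integer exponents $k_a$.
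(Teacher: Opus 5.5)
Your Step~1 is fine. It is the paper's converse direction; you should also check, as the paper does, that the full hyperbolicity cone of a Jordan product is the product of the symmetric cones after sign changes on the factors. The gap is that none of the forward implications is established.

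\textbf{Step~2.} The inclusion $H_e\overline C\subseteq K$ already holds for every hyperbolic $p$ (Lemma~\ref{thm:n-2-2}), so the only content is $K\subseteq H_e\overline C$. Obtaining it ``together with self-duality under $H_e$'' assumes the conclusion. Your ``hoped-for identity'' $H_e(v\circ v,v\circ v)=\sum_j\lambda_j^4$ is just the hypothesis $\mathcal D_e(v)=0$. The spectral idempotents $c_j\in\overline C$ presuppose a spectral theory for $\circ$ that you have not derived.

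\textbf{Step~3.} This step loops back to $\mathcal D_e\equiv0$ and invokes ``$\mathcal D\equiv0$ near $e$'' plus a fourth-derivative computation. But every hypothesis is at a single point. There, $\mathcal D_e\equiv0$ only expresses the quartic jet of $\log p$ through the cubic one, whereas the Jordan identity constrains the cubic tensor alone. You need control of higher power sums, which only hyperbolicity supplies. Your account of nonnegativity is also off: a slice through $e$ and $v$ sees only $\lambda^e(v)$ and cannot detect $\|v^2\|_e^2$.

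\textbf{The missing idea.} Apply Lemma~\ref{thm:n-2-1} to the pair $(u,u^2)$. This gives $\mathcal D_e(u)=\operatorname{tr}(B-A^2)^2$, so vanishing forces $B=A^2$, whence $\lambda^e(u^2)=\lambda^e(u)^2$ and $S_k(u^2)=S_{2k}(u)$.
\begin{itemize}
\item Repeating with $(u,u^3)$ gives $\|u^3\|^2=S_6(u)$ and the representing matrix $A^3$.
\item For $u\in C$, the equality case of Theobald's theorem for $(u^2,u^3)$ gives $\langle u^2,(u^3)^2\rangle=S_8(u)$.
\item Then $\|(u^2)^2\|^2$, $\|u\circ u^3\|^2$ and $\langle (u^2)^2,u\circ u^3\rangle$ all equal $S_8(u)$. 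This forces $(u^2)^2=u\circ(u\circ u^2)$, whose linearization is the Jordan identity.
\end{itemize}

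\textbf{Recovery of $p$.} This is also only asserted. Equality of characteristic roots with multiplicities, and integrality of the $k_a$, must be derived. One way is from $S_{2^r}(v)=\|v^{2^{r-1}}\|^2=\sum_ak_a\sum_i\mu_{a,i}(v_a)^{2^r}$, translation by $e$, and Newton's identities. Integrality then follows from $k_a=S_2(c)\in\mathbb Z_{>0}$, because squaring permutes the $p$-spectrum of a primitive idempotent $c$. Appealing to irreducibility of $\Delta_{J_a}$ presupposes that $p$ is already a product of their powers.

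\textbf{Self-duality implies vanishing defect.} For $H_e\overline C=K\Rightarrow\mathcal D_e\equiv0$, your extreme-ray sketch needs a concrete replacement, such as the paper's argument. The $H_e$-gradients $c_i$ of the distinct analytic root branches at a generic $x$ lie in $\overline C$ by monotonicity and self-duality. The relations $\sum_ir_ic_i=e$ and $\sum_ir_ia_ic_i=x$ then force $\langle c_i,c_j\rangle_e=\delta_{ij}/r_i$, whence $\|x^2\|_e^2=S_4(x)$.
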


Theorem~\ref{thm:n-4-1} includes the equivalent condition that the cubic tensor of $-\log p$ has zero Levi--Civita covariant derivative at one point. The proof reconstructs the Jordan product from square and cube spectra and an eighth-order identity, then recovers $p$ by Newton identities. This is related to the classification of self-scaled barriers by Hauser and G\"uler \cite{HauserGuler} and to Hildebrand's description of Hessian potentials with parallel third derivatives \cite{Hildebrand}; hyperbolicity makes a condition at one point sufficient here.

The same defect measures the failure of the normalized Hessian determinant to be constant. Set $\Psi=\log\det H+(2n/m)\log p$. Proposition~\ref{thm:n-6-1} expresses $\operatorname{tr}(H^{-1}D^2\Psi)$ as a sum of two nonnegative terms, twice the Gaussian average of $\mathcal D$ and one half of the squared gradient of $\Psi$.

\begin{theorem}[The Hessian determinant equation]\label{thm:intro-ma}
For a complete hyperbolic polynomial $p$, the following are equivalent: $\Psi$ has a local maximum in $C$; $\Psi$ is constant on $C$; and
\[
 \det(-D^2\log p)=c_0p^{-2n/m}\qquad\text{on }C
\]
for some $c_0>0$. These conditions hold precisely for the products \eqref{eq:intro-jordan} satisfying $k_a\rank(J_a)/\dim J_a=m/n$ for every $a$.
\end{theorem}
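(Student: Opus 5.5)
The plan is to prove the cycle of implications (c)$\Rightarrow$(b)$\Rightarrow$(a)$\Rightarrow$(product form with the rank condition)$\Rightarrow$(c). Here (a) means $\Psi$ has a local maximum, (b) means $\Psi$ is constant, and (c) is the Monge--Amp\`ere equation.

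First, (c) and (b) are the same statement. $\det(-D^2\log p)=c_0p^{-2n/m}$ is literally $\Psi\equiv\log c_0$, and every constant function attains a local maximum, which gives (a).

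For (a)$\Rightarrow$(b), use the identity of Proposition~\ref{thm:n-6-1}:
\[
\operatorname{tr}(H^{-1}D^2\Psi)=2\,\E[\mathcal D]+\tfrac12\|\nabla\Psi\|_{H^{-1}}^2\ge0 .
\]
So $\Psi$ is a subsolution of the elliptic operator $L=\operatorname{tr}(H^{-1}D^2\,\cdot)$. This operator is uniformly elliptic with smooth coefficients on compact subsets of $C$, because $H$ is positive definite since $p$ is complete. By the strong maximum principle of E.~Hopf, $\Psi$ is constant on a neighbourhood of the maximizer. The set where $\Psi$ equals its maximum is then open and closed in the connected set $C$, so $\Psi$ is constant. (Real-analyticity of $\Psi$ on $C$ would also suffice.)

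Next, constancy gives $L\Psi=0$ and $\nabla\Psi=0$. The identity then forces the Gaussian average of $\mathcal D_e$ to vanish at every $e$. Since $\mathcal D_e\ge0$ is a polynomial, $\mathcal D_e\equiv0$, and Theorem~\ref{thm:intro-rigidity} yields the product form \eqref{eq:intro-jordan}.

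It remains to compute $\Psi$ for such products and read off the rank condition. For $p=c\prod_a\Delta_{J_a}^{k_a}$ the Hessian is block diagonal, and $-D^2\log\Delta_J^{k}=kP(x^{-1})$, where $P$ is the quadratic representation. By standard Jordan theory (Faraut--Kor\'anyi), $\det P(x^{-1})=\Delta_J(x)^{-2\dim J/\rank J}$. Hence
\[
\det H=\mathrm{const}\cdot\prod_a\Delta_{J_a}^{-2\dim J_a/\rank J_a},\qquad p^{2n/m}=\mathrm{const}\cdot\prod_a\Delta_{J_a}^{2nk_a/m}.
\]
The determinants $\Delta_{J_a}$ are independent functions in separate variable groups, so $\Psi$ is constant iff $nk_a/m=\dim J_a/\rank J_a$ for every $a$. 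This is exactly the stated condition $k_a\rank(J_a)/\dim J_a=m/n$. The same computation proves the converse: products satisfying the condition make $\Psi$ constant, so (c) holds.

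The main obstacle is (a)$\Rightarrow$(b). It depends on the precise form and the positivity of the terms in Proposition~\ref{thm:n-6-1}, in particular that the averaged defect enters with a nonnegative coefficient, together with applying the maximum principle globally on the cone. Everything after that is bookkeeping with the known determinant of the quadratic representation.
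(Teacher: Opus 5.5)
Your proof is correct. It differs from the paper's only in how you get from a local maximum to the Jordan form.

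The paper does not use Hopf's strong maximum principle. At a local maximum $e$ it uses only the pointwise second-order conditions $d\Psi(e)=0$ and $\mathcal L\Psi(e)\le0$. Both terms on the right of \eqref{eq:n-6-2} are nonnegative, so both vanish at $e$. This gives $\mathbb E\mathcal D_e(G_e)=0$, hence $\mathcal D_e\equiv0$ at that single point. The one-point rigidity of Theorem~\ref{thm:n-4-1} then gives the product form. Finally, differentiating the explicit formula \eqref{eq:n-6-13} radially in the $a$th block at $e$ produces $r_a$ times the $a$th coefficient. So $d\Psi(e)=0$ yields the balance condition and global constancy together.

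The paper's route is more elementary and proves slightly more: one point with $\mathcal L\Psi(e)\le0$ already forces a balanced Jordan product, with no PDE machinery. Your route uses \eqref{eq:n-6-2} only as a subsolution inequality, and so it needs Hopf plus a separate local-to-global step.

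That local-to-global step, as you wrote it, has a flaw. The level set $\{\Psi=\Psi(e)\}$ is not open when the maximum is only local, because its points need not be local maxima. For example, the $C^2$ subsolution $\max(0,x)^3$ on $\mathbb R$ is constant near a non-strict local maximum without being constant. So real-analyticity of $\Psi$ on $C$ is genuinely needed here, not an optional alternative.

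You can also avoid global constancy at this stage altogether. Local constancy near $e$ already gives $\mathcal D_e\equiv0$ at one point. Your explicit computation of $\Psi$ for Jordan products then forces the balance condition, and hence constancy on all of $C$.
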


These products are called balanced. The proof is given in Theorem~\ref{thm:n-6-2}. Fox studied the homogeneous Hessian equation and its relation to affine spheres \cite{Fox}; the classification above is within complete hyperbolic polynomials.

The logarithmic gradient $\tau_p=D\log p$ maps $C$ diffeomorphically onto $C^\vee=\operatorname{int}K$. Its multiplicative Legendre transform is $p_*(y)=p(\tau_p^{-1}(y))^{-1}$. Working over an algebraically closed field of characteristic zero, Etingof, Kazhdan and Polishchuk asked whether polynomiality of this transform forces $p$ to be a relative invariant of a prehomogeneous vector space \cite[Section~3.4, Question~1]{EKP}. Their cubic classification \cite[Theorem~3.10]{EKP}, with the proof completed by Chaput and Sabatino \cite{ChaputSabatino}, gives an affirmative answer in degree three. Kogiso and Sato constructed Clifford quartic forms with polynomial multiplicative Legendre transforms that are not relative invariants of prehomogeneous vector spaces, giving counterexamples to the general question \cite[Theorems~2.14 and~3.2(1)]{KogisoSato}. These counterexamples are absolutely irreducible \cite[Theorem~3.2(3)]{KogisoSato}.

\begin{theorem}[Polynomial multiplicative duality]\label{thm:intro-duality}
Let $p$ be an irreducible complete real hyperbolic polynomial. Its multiplicative Legendre transform is polynomial if and only if, up to an invertible real linear transformation and a positive factor, $p$ is a simple Euclidean Jordan determinant.

More generally, suppose $p$ is complete and every real irreducible factor vanishes on a nonempty relatively open smooth hypersurface piece of $\partial C$ on which all other factors are nonzero. Then $p_*$ is polynomial if and only if $p$ has the Jordan product form \eqref{eq:intro-jordan}.
\end{theorem}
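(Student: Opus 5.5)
The plan is to reduce polynomiality of $p_*$ to the one-point rigidity of Theorem~\ref{thm:intro-rigidity}, through its condition $H_e\overline C=K$. The easy direction is a direct computation. For a product $p=c\prod_a\Delta_{J_a}^{k_a}$ the gradient map splits over the factors. In each simple factor, $\nabla\log\Delta_J(x)$ is the Jordan inverse $x^{-1}$ after the trace-form identification. So $\tau_p^{-1}$ acts blockwise as $y_a\mapsto k_a y_a^{-1}$, and
\[
p_*(y)=c^{-1}\prod_a k_a^{-k_a r_a}\Delta_{J_a}(y_a)^{k_a},
\]
which is a polynomial. This settles the "if" part of both statements, since the irreducible case is $s=1$, $k_1=1$.

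For the converse, suppose $p_*$ is a polynomial. First I would record its degree and hyperbolicity. It is homogeneous of degree $m$, because $\tau_p$ is homogeneous of degree $-1$. It is positive on $C^\vee=\operatorname{int}K$. The duality $\tau_{p_*}=\tau_p^{-1}$ holds by the standard involutivity of the multiplicative Legendre transform, so $(p_*)_*=p$ up to a constant. Next I would analyse the boundary. Let $x_0$ be a point of the assumed smooth piece of $Z(f)\cap\partial C$, where $f$ is an irreducible factor of multiplicity $a$ and the other factors do not vanish. Near $x_0$ we have
\[
\tau_p(x)=a\frac{\nabla f(x)}{f(x)}+O(1),
\]
so as $x\to x_0$ inside $C$ the image $\tau_p(x)$ escapes to infinity along the direction $\nabla f(x_0)$. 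Homogeneity of degree $-1$ then lets me rescale. For $t\to\infty$, the points $y=t\nabla f(x_0)+w$ have preimages $x=\tau_p^{-1}(y)$ that approach $x_0$ and satisfy $f(x)\approx a/t$. Hence
\[
p_*(y)=p(x)^{-1}\sim t^{a}\cdot\text{const}.
\]

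The key step is to turn this asymptotic expansion into the statement that $p_*$ vanishes on $\R_{>0}\nabla f(x_0)$, i.e.\ on a boundary ray of $K$. Then $H_{e}$, or equivalently $-\nabla\tau_p$, carries boundary rays of $\overline C$ to boundary rays of $K$. I would do this by an inversion argument applied to $p_*$. Since $p_*$ is a polynomial whose transform is the polynomial $p$, the same expansion applied to the pair $(p_*,p)$ shows that every irreducible factor $g$ of $p_*$ gives rise to a face of $\overline C$. Polynomiality lets one match the factors of $p$ and $p_*$ with their multiplicities, and shows that the boundary of $C$ is mapped onto the boundary of $K$ by the extension of the Gauss-type map $x\mapsto \nabla f(x)$. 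In the irreducible case, the visibility hypothesis is automatic: a generic boundary point is smooth, since $\partial C$ is contained in $Z(p)$. The next step is to deduce $H_e\overline C=K$ for some $e$. The cleanest route I see is to differentiate the identity $p_*(\tau_p(x))p(x)=\mathrm{const}$ three and four times at $e$. Polynomiality of degree $m$ of $p_*$, combined with the characteristic-root expansion of $p_*$ along lines through $\tau_p(e)$, should force the Taylor coefficients of $-\log p$ to satisfy exactly $\mathcal D_e\equiv0$. The reason is that $p_*$ restricted to lines is again hyperbolic of degree $m$ with respect to $H_e$-transported roots. Equality in the determinantal inequality $\mathcal D_e\ge0$ then follows from the fourth-order Newton identity for $p_*$.

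I expect the main obstacle to be this last step: proving rigorously that the characteristic roots of $p_*$ along $H_e v$ are the reciprocals, or appropriate transforms, of those of $p$. Only then does the fourth power sum $\sum_j\lambda_j^e(v)^4$ match $H_e(v\circ v,v\circ v)$. If the direct derivative computation stalls, I would fall back on the boundary correspondence instead. In that approach I would show that $H_e$ maps the extreme rays of $\overline C$ into $K$, and that together with self-duality this gives $H_e\overline C=K$. Theorem~\ref{thm:intro-rigidity} then yields the Jordan product form. In the irreducible case this must be a single simple Jordan determinant with $k=1$.
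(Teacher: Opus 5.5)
Your ``if'' direction is correct and is exactly the paper's computation \eqref{eq:n-7-4}. The converse, however, has a genuine gap: you never give a mechanism by which the global algebraic hypothesis that $p_*$ is a polynomial produces the local condition $\mathcal D_e\equiv0$ or $H_e\overline C=K$.

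\textbf{The Taylor step.} Differentiating $p_*(\tau_p(x))\,p(x)=1$ three or four times at $e$ uses nothing about polynomiality. It holds for every complete hyperbolic $p$, and it yields only the reversal identity $\mathcal D^{p_*}_{\tau_p(x)}(H_xv)=-\mathcal D^p_x(v)$ of Proposition~\ref{thm:n-7-5}. To conclude, you need $\mathcal D^{p_*}\ge0$. That amounts to $p_*$ being hyperbolic, with roots along $H_ev$ tied to those of $p$, and this is precisely the step you flag as the obstacle.

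It is not a technicality. By the same identity, hyperbolicity of $p_*$ is equivalent to the Jordan conclusion. Moreover, nothing in that step uses boundary visibility, so an argument of this kind would also settle the case the paper explicitly leaves open.

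\textbf{The boundary fallback.} It does not close the gap either. Knowing that $p_*$ vanishes on normal rays $\R_{>0}\nabla f(x_0)$ relates $\partial C$ to $\partial K$ through the nonlinear Gauss map, and says nothing about the fixed linear map $H_e$. The inclusion $H_e\overline C\subseteq K$ holds for every hyperbolic $p$ by \eqref{eq:n-2-14}. The reverse inclusion is the whole content, and your fallback assumes it (``together with self-duality''). The claim $\tau_p^{-1}(t\nabla f(x_0)+w)\to x_0$ is also unproved, for instance when several boundary points share a normal.

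\textbf{The missing idea.} You need to turn polynomiality into a polynomial identity for $p$ itself. With $q=p_*$, the Legendre identities give $\nabla q(\nabla p)=p^{m-2}x$. Differentiating and taking determinants yields
\[
\det D^2q(\nabla p)\,\det D^2p=(m-1)^2p^{n(m-2)},
\]
so by unique factorization $\det D^2p$ divides a power of $p$.

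\emph{Irreducible case.} This forces $\det D^2p=c\,p^{N}$, that is, $\det H=c_2p^{-2n/m}$. Hence $\Psi=\log\det H+\tfrac{2n}{m}\log p$ is constant. The identity
\[
\operatorname{tr}(H^{-1}D^2\Psi)=2\,\mathbb E\mathcal D_x(G_x)+\tfrac12\|d\Psi\|^2_{H^{-1}}
\]
of Proposition~\ref{thm:n-6-1} then gives $\mathcal D\equiv0$.

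\emph{Reducible case.} Divisibility alone is not enough; the example $P=abf$ shows this. Boundary visibility is what shows that the rational entries of $H^{-1}=xx^{\mathsf T}/(m-1)-p(D^2p)^{-1}$ have no poles, using the bound $H_x^{-1}\preceq\lambda_1^e(x)^2H_e^{-1}$. So $H^{-1}$ is a quadratic polynomial matrix. Differentiating $H^{-1}\tau_p=x$ three times gives $\|v^2\|^2=S_4(v)$, i.e.\ $\mathcal D_x\equiv0$, and Theorem~\ref{thm:intro-rigidity} finishes.
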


Theorems~\ref{thm:n-7-1} and~\ref{thm:n-7-4} prove these assertions. The complexification of a simple Euclidean Jordan determinant is a relative invariant of the prehomogeneous action of its complexified structure group on $V\otimes_{\R}\C$ \cite[Chapter~VIII]{FK}; thus Theorem~\ref{thm:intro-duality} gives an affirmative answer to their question for the complexifications of irreducible complete hyperbolic polynomials. The transform $p_*$ also governs the exponential part of the Riesz-kernel asymptotics. Theorem~\ref{thm:n-8-1} shows that a pointwise relative approximation $q_{\alpha_j}(\alpha_j y)\sim b_j\Phi(y)^{s_j}$ along a sequence $\alpha_j\to\infty$, with one fixed positive function $\Phi$, forces the balanced Jordan form. The cone gamma integral then gives an exact power formula for all sufficiently large exponents.

The exact positivity threshold remains open in general for elementary symmetric polynomials. With
\[
 E_{r,n}(x)=\sum_{1\le i_1<\cdots<i_r\le n}x_{i_1}\cdots x_{i_r},
 \qquad 2\le r\le n,
\]
Scott and Sokal conjectured that $E_{r,n}^{-\alpha}$ is completely monotone on the positive orthant precisely for the following exponents \cite[Conjecture~1.11]{SS}:
\begin{equation}\label{eq:intro-elementary-conjecture}
 \alpha=0\quad\text{or}\quad\alpha\ge\frac{n-r}{2}
.
\end{equation}
The necessity of this condition is known \cite[Theorem~6.6]{KMS}. Theorem~\ref{thm:main} gives a sufficient bound depending only on $n$; the question is whether it can be lowered to $(n-r)/2$. The Lorentz calculation in Section~\ref{subsec:lorentz} concerns the $n^2/\alpha$ error in Gaussian approximation and allows comparison with the exact positivity range in the quadratic case.

Part~I develops the positive Riesz measures and their admissible exponents. Sections~\ref{sec:background} and~\ref{sec:proof} contain the analytic estimates and the proof of Theorem~\ref{thm:main}, followed by its consequences and the specialized V\'amos quartic. The spectral tools then lead to the probability law and the sharp gap. Part~II develops the spectral metric and its Jordan rigidity, then applies them to cumulants, Hessian determinants, polynomial duality, and asymptotic kernel formulas. The final examples and appendix describe the role of the hypotheses in the duality statements. Regularization on general proper cones and operator-valued extensions will be treated in subsequent work.

\part{Positivity and admissible exponents}
\section{Hyperbolicity and Gaussian moments}\label{sec:background}
\subsection{Hyperbolic polynomials and their cones}
A homogeneous real polynomial $p$ of degree $m\ge1$ is hyperbolic with respect to $e$ if $p(e)>0$ and $t\mapsto p(v+te)$ has only real zeros for every $v\in\R^n$. Its hyperbolicity cone is the connected component $C$ of $\{p>0\}$ containing $e$. G\aa rding's theorem says that $C$ is convex and that $p$ is hyperbolic with respect to every point of $C$ \cite{Garding1959}. Hyperbolicity cones also underlie the barrier functions studied by G\"uler \cite{Guler} and the optimization framework of Renegar \cite{Renegar}.

We fix an inner product and the corresponding Lebesgue measure. The dual cone is
\[
 C^*=\{y\in\R^n:\langle x,y\rangle\ge0\text{ for all }x\in C\}.
\]
The order $A\preceq B$ for symmetric matrices means that $B-A$ is positive semidefinite. We use $D_v$ for differentiation in the direction $v$, and $D^k f(x)[v_1,\ldots,v_k]$ for the $k$th differential.

For a real homogeneous polynomial positive on the orthant, the half-plane property means nonvanishing when all variables have positive real parts. By homogeneity, this is equivalent to hyperbolicity with respect to every positive vector. The relation between this property and matroids is developed in \cite{COSW}; further background on stable polynomials is given in \cite{Pemantle}. We use the right-half-plane convention throughout.

\subsection{Complete monotonicity and Riesz kernels}
A smooth function $f:C\to\R$ is completely monotone if
\begin{equation}\label{ineq:pre-cm}
 (-1)^kD_{v_1}\cdots D_{v_k}f(x)\ge0
 \qquad(x,v_1,\ldots,v_k\in C,\ k\ge0).
\end{equation}
The cone version of the Bernstein--Hausdorff--Widder--Choquet theorem states that this holds if and only if
\[
 f(x)=\int_{C^*}e^{-\langle x,y\rangle}\,d\mu(y)
\]
for a positive measure $\mu$, with the integral finite for $x\in C$ \cite[Theorem~2.2]{SS}; the representing measure is unique, since the Laplace transform is injective. We refer to Choquet \cite{Choquet} for the general cone formulation.

A hyperbolic polynomial positive on $C$ is zero-free on the tube $T_C=C+i\R^n$. This tube is convex, so $p$ has a holomorphic logarithm there whose restriction to $C$ is real. All complex powers of $p$ below use this logarithm. For complete $p$ and $\alpha>n$, its Riesz kernel is given by
\begin{equation}\label{eq:a-inverse}
 q_\alpha(y)=\frac{1}{(2\pi)^n}\int_{\R^n}
       p(x+i\xi)^{-\alpha}e^{\langle x+i\xi,y\rangle}\,d\xi,
 \qquad x\in C.
\end{equation}
The integral is independent of $x$, is supported on $K=C^*$, and has Laplace transform $p^{-\alpha}$ \cite[Theorem~3.1]{Garding1951}; see also \cite[Theorem~3.4]{MSUZ}. In the range of Theorem~\ref{thm:main}, the bounds below give absolute convergence with the moments needed to differentiate twice.

\subsection{Gaussian cubic moments}
The next lemma controls the cubic term of the phase in \eqref{eq:a-inverse}. Keeping this term as a polynomial, instead of bounding it pointwise, is what makes the error of order $n^2/\alpha$.

\begin{lemma}\label{lem:cubic}
Let \(n\ge2\), and let \(P(z)=T[z,z,z]\) be a real cubic with symmetric
coefficient tensor \(T\). Suppose
\begin{equation}\label{ineq:pre-cubic-hyp}
 |P(z)|\le\kappa\|z\|^3\qquad(z\in\mathbb R^n),\qquad \kappa\ge0.
\end{equation}
Put \(c=9\kappa/2\), and let \(Z_0\) be a standard Gaussian vector.
For every unit vector \(w\),
\begin{align}
 \mathbb E P(Z_0)^2&\le15c^2n^2,\label{ineq:pre-cubic-six}\\
 \mathbb E[\langle w,Z_0\rangle^2P(Z_0)^2]
 &\le69c^2n^2.\label{ineq:pre-cubic-eight}
\end{align}
\end{lemma}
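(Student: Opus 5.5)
The approach is to compute both Gaussian moments exactly with Wick's formula and then bound each Wick contraction by $\kappa$ alone. Write $T_{ijk}$ for the coefficients of $T$ in an orthonormal basis. Let $v\in\R^n$ be its trace vector, $v_i=\sum_jT_{ijj}$, and for a unit vector $u$ let $A_u$ be the symmetric matrix $T[u,\cdot,\cdot]$.

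The first step is a multilinear bound: $|T[u_1,u_2,u_3]|\le c$ for unit vectors $u_1,u_2,u_3$. This follows from the polarization identity for symmetric trilinear forms, $T[x,y,z]=\frac1{24}\sum_{\varepsilon}\varepsilon_1\varepsilon_2\varepsilon_3P(\varepsilon_1x+\varepsilon_2y+\varepsilon_3z)$. With the hypothesis \eqref{ineq:pre-cubic-hyp}, this gives a constant of the order $9\kappa/2$, which is the source of $c$. (Banach's theorem on symmetric tensors would even give $\kappa$, but $c$ suffices.) Consequences:
\begin{itemize}
\item the operator norm of each $A_u$ is at most $c$;
\item $|\tr A_u|\le nc$, hence $|v|\le nc$, since $\langle u,v\rangle=\tr A_u$;
\item $\|A_u\|_F^2\le nc^2$;
\item $\|T\|_F^2=\sum_i\|A_{e_i}\|_F^2\le n^2c^2$.
\end{itemize}

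For \eqref{ineq:pre-cubic-six}, Wick's formula gives
\[
\E P(Z_0)^2=6\|T\|_F^2+9|v|^2\le15c^2n^2 .
\]
The factor $6$ counts the three-to-three pairings, and $9$ counts the pairings with one internal contraction in each factor.

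For \eqref{ineq:pre-cubic-eight}, write $g=\langle w,Z_0\rangle$ and apply Gaussian integration by parts twice in direction $w$:
\[
\E[g^2F]=\E F+\E[\partial_w^2F],\qquad F=P^2 .
\]
Here $\partial_w^2P^2=2(\partial_wP)^2+2P\,\partial_w^2P$, with $\partial_wP(z)=3z^{\top}A_wz$ and $\partial_w^2P(z)=6\langle b,z\rangle$ for $b=T[w,w,\cdot]$, so $|b|\le c$. The three terms are bounded as follows.
\begin{itemize}
\item The quadratic form gives $\E(3Z_0^{\top}A_wZ_0)^2=9\bigl(2\|A_w\|_F^2+(\tr A_w)^2\bigr)\le9(2n+n^2)c^2$.
\item The cross term gives $\E[P(Z_0)\langle b,Z_0\rangle]=3\langle v,b\rangle$, of absolute value at most $3nc^2$.
\item The first term $\E F$ is bounded by \eqref{ineq:pre-cubic-six}.
\end{itemize}
Altogether,
\[
\E[g^2P^2]\le c^2\bigl(15n^2+18n^2+36n+36n\bigr)=c^2(33n^2+72n)\le69c^2n^2,
\]
because $72n\le36n^2$ for $n\ge2$. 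This is exactly where the hypothesis $n\ge2$ enters.

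The main obstacle is bookkeeping rather than any conceptual difficulty. The Wick counts $6$ and $9$, and the contraction in the integration-by-parts identity, must be right, so I would double-check them on $P=z_1^3$ and $P=z_1|z|^2$. The other point to verify is that the polarization constant really yields the multilinear bound with constant $c=9\kappa/2$ and not something larger. If it does not, I would fall back on Banach's equality of the symmetric and multilinear norms, which yields the bounds with $\kappa\le c$.
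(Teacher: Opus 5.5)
Your proof is correct and follows the paper's argument step for step: the polarization bound, the Wick identity $\mathbb E P(Z_0)^2=6\|T\|_F^2+9|v|^2$, and integration by parts in the direction $w$, which yields $c^2(33n^2+72n)\le 69c^2n^2$. One slip needs fixing: the polarization identity carries the factor $\frac1{48}=\frac1{3!\,2^3}$, not $\frac1{24}$ (with $\frac1{24}$ the right side equals $2T$). With the correct factor, the eight terms of size at most $27\kappa$ give exactly $c=9\kappa/2$, so the Banach fallback is not needed.
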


\begin{proof}
Polarization gives the exact identity
\[
 T(u,v,w)=\frac1{48}\sum_{\epsilon\in\{-1,1\}^3}
 \epsilon_1\epsilon_2\epsilon_3
 P(\epsilon_1u+\epsilon_2v+\epsilon_3w).
\]
For unit vectors its eight summands have magnitude at most
\(27\kappa\), giving
\begin{equation}\label{ineq:pre-polar}
 |T(u,v,w)|\le\frac{8\cdot27}{48}\kappa=c.
\end{equation}
For a unit \(u\), let \(T_u\) be the matrix with bilinear form
\(T(u,\cdot,\cdot)\), and put \(a_i=\sum_jT_{ijj}\). Then
\begin{equation}\label{ineq:pre-tensor}
 \|T\|_{\mathrm F}^2=\sum_i\|T_{e_i}\|_{\mathrm F}^2\le n^2c^2,
 \qquad \|a\|=\sup_{\|u\|=1}|\operatorname{tr}T_u|\le nc.
\end{equation}
The Gaussian product formula groups the pairings of the six factors
into six pairings joining every index in the first tensor to an index
in the second, and nine pairings with an internal pair in each tensor.
Consequently,
\[
 \mathbb E P(Z_0)^2=6\|T\|_{\mathrm F}^2+9\|a\|^2.
\]
This proves the first estimate.

For the second estimate, Gaussian integration by parts in the direction $w$ yields
\[
 \mathbb E[\langle w,Z_0\rangle^2P^2]
 =\mathbb EP^2+\mathbb E D_w^2(P^2).
\]
Here \(D_wP=3T(w,z,z)\) and \(D_w^2P=6T(w,w,z)\). The quadratic
Gaussian product formula and the linear-cubic product formula give
\begin{align*}
 \mathbb E[\langle w,Z_0\rangle^2P^2]
 ={}&\mathbb EP^2+18\{2\|T_w\|_{\mathrm F}^2+
          (\operatorname{tr}T_w)^2\}\\
 &+36\,a\cdot T(w,w,\cdot).
\end{align*}
The individual estimates are
\begin{equation}\label{ineq:pre-slices}
 \|T_w\|_{\mathrm F}^2\le nc^2,\quad
 |\operatorname{tr}T_w|\le nc,\quad
 \|T(w,w,\cdot)\|\le c.
\end{equation}
Thus the last display is at most
\begin{equation}\label{ineq:pre-69}
 c^2(33n^2+72n)\le69c^2n^2\qquad(n\ge2),
\end{equation}
as required.
\end{proof}

\section{Proof of the main theorem}\label{sec:proof}
We first locate the saddle of the inverse Laplace integral and normalize the contour by the Hessian of $-\log p$. We then estimate the value and the first two moments of the normalized integral. Throughout this section $p$ is complete and $n\ge2$.

\subsection{Real roots, the Hessian, and the saddle}
For $e\in C$ and a real vector $v$, hyperbolicity gives real numbers $\lambda_j(v)$, counted with multiplicity, such that
\begin{equation}\label{eq:a-root-factorization}
 \frac{p(e+tv)}{p(e)}=\prod_{j=1}^m(1+t\lambda_j(v)).
\end{equation}
Zero values of $\lambda_j(v)$ account for possible loss of degree in this one-variable restriction. Differentiating at zero, we obtain
\begin{equation}\label{eq:a-root-derivatives}
 D\log p(e)[v]=\sum_j\lambda_j(v),\qquad
 H_p(e)[v,v]=\sum_j\lambda_j(v)^2.
\end{equation}
In particular the Hessian in \eqref{eq:a-hessian} is positive semidefinite. If its value on $v$ is zero, every root parameter in \eqref{eq:a-root-factorization} vanishes. The polynomial is then constant and nonzero on $e+\mathbb Rv$, so this whole line belongs to the same component $C$ of the nonvanishing set. Dividing the two ends of the line by arbitrarily large positive scalars puts both $v$ and $-v$ in $\overline C$. Pointedness therefore gives $v=0$. We have proved
\begin{equation}\label{ineq:a-hessian-positive}
 H_p(e)\succ0\qquad(e\in C).
\end{equation}

Fix $y\in\operatorname{int}K$ and consider
\begin{equation}\label{eq:a-objective}
 \Psi_y(x)=\langle x,y\rangle-\alpha\log p(x),\qquad x\in C.
\end{equation}
Its Hessian is $\alpha H_p(x)$, hence it is strictly convex. At any finite boundary point of $C$, the polynomial tends to zero, so $\Psi_y$ tends to positive infinity. Moreover there are constants $c_y>0$ and $M_p>0$ with
\begin{equation}\label{ineq:a-coercivity}
 \langle x,y\rangle\geq c_y\|x\|,\qquad
 p(x)\leq M_p\|x\|^m\qquad(x\in C).
\end{equation}
The first assertion follows by minimizing the pairing on $\overline C$ intersected with the unit sphere; the second follows by maximizing the absolute value of $p$ on the unit sphere and using homogeneity. Thus the linear part of \eqref{eq:a-objective} dominates its logarithmic part at infinity. There is a unique minimizer in $C$, and its critical-point equation is \eqref{eq:a-saddle}. 

\subsection{Normalization of the entire integration contour}
Fix this saddle and abbreviate $H_p(e)$ to $H_p$. Make the change of variables
\begin{equation}\label{eq:a-normalization}
 \xi=\alpha^{-1/2}H_p^{-1/2}z,
 \qquad b_j(z)=\lambda_j(H_p^{-1/2}z),\qquad r=\|z\|.
\end{equation}
Equation~\eqref{eq:a-root-derivatives} becomes
\begin{equation}\label{eq:a-root-norm}
 \sum_jb_j(z)^2=r^2.
\end{equation}
The normalized integral is
\begin{equation}\label{eq:a-normalized-integral}
 s:=\frac{q_\alpha(y)}{Q_\alpha(y)}
 =\frac1{(2\pi)^{n/2}}\int_{\mathbb R^n}W(z)\,dz,
\end{equation}
where
\begin{equation}\label{eq:a-W}
 W(z)=\exp\!\left(i\sqrt\alpha\sum_jb_j(z)\right)
       \prod_j(1+ib_j(z)/\sqrt\alpha)^{-\alpha}.
\end{equation}
The logarithms of the factors in \eqref{eq:a-W} are determined by continuation from $z=0$, where all factors equal one; their sum is therefore the tube logarithm in \eqref{eq:a-inverse}.

Write $W=M e^{i\phi}$. Its modulus and its centered phase are
\begin{align}
 M(z)&=\prod_j(1+b_j(z)^2/\alpha)^{-\alpha/2},
 \label{eq:a-modulus}\\
 \phi(z)&=\alpha\sum_j\left(
 \frac{b_j(z)}{\sqrt\alpha}
 -\arctan\frac{b_j(z)}{\sqrt\alpha}\right).
 \label{eq:a-phase}
\end{align}
Reality of $p$ and the chosen logarithm give $W(-z)=\overline{W(z)}$, so $s$ is real. Put
\begin{equation}\label{eq:a-envelope}
 K_\alpha(z)=(1+r^2/\alpha)^{-\alpha/2}.
\end{equation}
The inequalities $\log(1+u)\leq u$ for nonnegative $u$, and $\prod_j(1+u_j)\geq1+\sum_ju_j$ for nonnegative $u_j$, together with \eqref{eq:a-root-norm}, give the global bounds
\begin{equation}\label{ineq:a-modulus}
 e^{-r^2/2}\leq M(z)\leq K_\alpha(z).
\end{equation}

The leading term of the phase is determined by the cubic polynomial
\begin{equation}\label{eq:a-cubic}
 P(z)=\sum_jb_j(z)^3
 =\frac12D^3\log p(e)
 [H_p^{-1/2}z,H_p^{-1/2}z,H_p^{-1/2}z].
\end{equation}
The expression in terms of $D^3\log p$ shows that $P$ is a polynomial. By \eqref{eq:a-root-norm},
\begin{equation}\label{ineq:a-cubic-diagonal}
 |P(z)|\leq\sum_j|b_j(z)|^3\leq r^3.
\end{equation}
For every real $u$, integration of the derivative of the remainder yields
\begin{equation}\label{ineq:a-arctan}
 \left|u-\arctan u-\frac{u^3}{3}\right|
 =\left|\int_0^u\frac{-t^4}{1+t^2}\,dt\right|
 \leq\frac{|u|^5}{5}.
\end{equation}
Consequently
\begin{equation}\label{ineq:a-phase-remainder}
 \phi(z)=\frac{P(z)}{3\sqrt\alpha}+R(z),
 \qquad |R(z)|\leq\frac{r^5}{5\alpha^{3/2}}.
\end{equation}
Here $\sum_j|b_j|^5\le r^5$. Squaring the two terms and using $2|ab|\leq a^2+b^2$, we obtain
\begin{equation}\label{ineq:a-cosine-pointwise}
 1-\cos\phi\leq\frac{\phi^2}{2}
 \leq\frac{P(z)^2}{9\alpha}+
       \frac{r^{10}}{25\alpha^3}.
\end{equation}
We estimate the integral of $P^2$ using Lemma~\ref{lem:cubic}.

\subsection{The Gaussian comparison}
The radial majorant has total mass
\begin{equation}\label{eq:a-J}
 J=\frac1{(2\pi)^{n/2}}\int_{\mathbb R^n}K_\alpha(z)\,dz
 =\left(\frac\alpha2\right)^{n/2}
  \frac{\Gamma((\alpha-n)/2)}{\Gamma(\alpha/2)}.
\end{equation}
For a homogeneous polynomial $L$ of degree $2k$, integration in polar coordinates yields
\begin{equation}\label{eq:a-radial-polynomial}
 \frac1{(2\pi)^{n/2}}\int_{\mathbb R^n}L(z)K_\alpha(z)\,dz
 =R_k\,\mathbb E L(Z_0),\qquad
 R_k=J\prod_{j=1}^k\frac{\alpha}{\alpha-n-2j},
\end{equation}
provided $\alpha>n+2k$, where $Z_0$ is a standard Gaussian vector. Indeed the angular integrals for the two radial densities coincide, and the quotient of their radial integrals is evaluated by the beta integral after the substitution $u=r^2/\alpha$. In particular
\begin{equation}\label{eq:a-radial-moment}
 \mathcal M_{2k}:=\frac1{(2\pi)^{n/2}}\int r^{2k}K_\alpha(z)\,dz
 =R_k\prod_{j=0}^{k-1}(n+2j).
\end{equation}

The following estimates already hold in the larger range
\begin{equation}\label{ineq:a-moment-range}
 n\geq2,\qquad\alpha\geq64n^2,\qquad0\leq k\leq6.
\end{equation}
To bound $J$, let $U$ have the gamma distribution with shape $(\alpha-n)/2$ and rate one. Jensen's inequality applied to the convex function $u\mapsto u^{n/2}$ gives the first upper bound below. The logarithmic bound then follows from $-\log(1-t)\leq t/(1-t)$:
\begin{equation}\label{ineq:a-J-bounds}
 1\leq J\leq\left(\frac{\alpha}{\alpha-n}\right)^{n/2},
 \qquad
 \log J\leq\frac{n^2}{2(\alpha-n)}\leq\frac{n^2}{\alpha}.
\end{equation}
The lower bound is also immediate by integrating \eqref{ineq:a-modulus}. For the remaining factors of $R_k$, the quantities $(n+2j)/\alpha$ lie in $[0,1/2]$. On that interval, $-\log(1-t)\leq2t$. Hence
\begin{equation}\label{ineq:a-R-bounds}
 \begin{split}
 \log R_k
 &\leq\frac{n^2+2kn+2k(k+1)}{\alpha}
 \leq\frac{n^2+12n+84}{\alpha}\\
 &\leq\frac{28n^2}{\alpha}\leq\frac7{16}.
 \end{split}
\end{equation}
The third step uses $n\ge2$.
Comparing the power series for the exponential with the geometric series, we obtain
\begin{equation}\label{ineq:a-R-two}
 R_k\leq e^{7/16}\leq\frac{1}{1-7/16}
 =\frac{16}{9}<2.
\end{equation}

Apply Lemma~\ref{lem:cubic} to \eqref{eq:a-cubic} and \eqref{ineq:a-cubic-diagonal}. Its polarization constant is $c=9/2$. The two Gaussian bounds, followed by \eqref{eq:a-radial-polynomial} and \eqref{ineq:a-R-two}, give, for each unit vector $w$,
\begin{equation}\label{ineq:a-integrated-cubic}
 \begin{split}
 \frac1{(2\pi)^{n/2}}\int P(z)^2K_\alpha(z)\,dz
 &\leq\frac{1215}{2}n^2,\\
 \frac1{(2\pi)^{n/2}}\int\langle w,z\rangle^2P(z)^2K_\alpha(z)\,dz
 &\leq\frac{5589}{2}n^2.
 \end{split}
\end{equation}
For the radial remainder moments, $n+2j\leq(j+1)n$ follows from $n\geq2$. Using \eqref{eq:a-radial-moment}, we obtain
\begin{equation}\label{ineq:a-radial-high-moments}
 \mathcal M_{10}\leq2\cdot5!\,n^5=240n^5,
 \qquad
 \mathcal M_{12}\leq2\cdot6!\,n^6=1440n^6.
\end{equation}

Set $\delta=n^2/\alpha$. In the range \eqref{ineq:a-moment-range}, we have the useful remainder reduction
\begin{equation}\label{ineq:a-remainder-scale}
 \frac{n^3}{\alpha^2}\leq\frac{1}{4096n}\leq\frac1{8192}.
\end{equation}
By \eqref{ineq:a-modulus}, \eqref{ineq:a-cosine-pointwise},
\eqref{ineq:a-integrated-cubic}, and \eqref{ineq:a-radial-high-moments}, the integrated loss from the cosine is bounded by
\begin{equation}\label{ineq:a-cosine-loss}
 \begin{split}
 L_0&:=\frac1{(2\pi)^{n/2}}\int M(z)(1-\cos\phi(z))\,dz\\
 &\leq\frac{135}{2}\frac{n^2}{\alpha}
       +\frac{48}{5}\frac{n^5}{\alpha^3}
 <69\frac{n^2}{\alpha}.
 \end{split}
\end{equation}
The last inequality follows from \eqref{ineq:a-remainder-scale}.
Also \eqref{ineq:a-J-bounds} and the geometric-series estimate used above give
\begin{equation}\label{ineq:a-J-minus-one}
 J-1\leq e^\delta-1\leq\frac{\delta}{1-\delta}
 \leq\frac{64}{63}\delta<2\delta.
\end{equation}
The integral of $M$ lies between $1$ and $J$. Subtracting the nonnegative quantity $L_0$ therefore proves
\begin{equation}\label{ineq:a-s-error}
 1-L_0\leq s\leq J,
 \qquad |s-1|\leq69\frac{n^2}{\alpha}.
\end{equation}
We use this estimate together with the corresponding first- and second-moment bounds below.

\subsection{Two derivatives on a fixed contour}
The derivative estimates require the same moments, with a quadratic factor inserted. Define a real vector $c_0$ and a real symmetric matrix $\mathcal B$ by
\begin{equation}\label{eq:a-derivative-moments}
 c_0=\frac1{(2\pi)^{n/2}}\int z\,\operatorname{Im}W(z)\,dz,
 \qquad
 \mathcal B=\frac1{(2\pi)^{n/2}}\int zz^{\mathsf T}
               \operatorname{Re}W(z)\,dz.
\end{equation}
For a unit vector $w$, insert $\langle w,z\rangle^2$ into the estimate of $L_0$. The Gaussian term then has integral one. By radial symmetry,
\begin{equation}\label{eq:a-radial-direction}
 \frac1{(2\pi)^{n/2}}\int\langle w,z\rangle^2r^{10}K_\alpha(z)\,dz
 =\frac{\mathcal M_{12}}{n}.
\end{equation}
It follows that the loss in the lower quadratic-form bound is
\begin{equation}\label{ineq:a-B-lower-error}
 1-w^{\mathsf T}\mathcal B w
 \leq\frac{621}{2}\frac{n^2}{\alpha}
       +\frac{288}{5}\frac{n^5}{\alpha^3}
 <312\frac{n^2}{\alpha}.
\end{equation}
The last inequality again uses \eqref{ineq:a-remainder-scale}.

For the upper bound, $\operatorname{Re}W\leq M\leq K_\alpha$ implies
\begin{equation}\label{ineq:a-B-upper}
 \mathcal B\preceq B_0I,
 \qquad B_0=J\frac{\alpha}{\alpha-n-2}.
\end{equation}
Since $n+2\leq n^2$, the estimates already used for $R_k$ imply
\begin{equation}\label{ineq:a-B-zero}
 \log B_0\leq3\delta,\qquad
 B_0-1\leq\frac{3\delta}{1-3\delta}
 \leq\frac{192}{61}\delta<6\delta,
 \qquad B_0\leq\frac{64}{61}<2.
\end{equation}
Combining the lower and upper quadratic-form bounds, we obtain
\begin{equation}\label{ineq:a-B-error}
 \|\mathcal B-I\|\leq312\delta.
\end{equation}

For the first derivative, weighted Cauchy--Schwarz and
$\sin^2\phi\leq2(1-\cos\phi)$ show, for every unit vector $w$, that
\begin{equation}\label{ineq:a-cauchy}
 \begin{split}
 |\langle w,c_0\rangle|^2
 &\leq
 \left(\frac1{(2\pi)^{n/2}}\int\langle w,z\rangle^2M(z)\,dz\right)
 \left(\frac1{(2\pi)^{n/2}}\int M(z)\sin^2\phi(z)\,dz\right)\\
 &\leq2B_0L_0.
 \end{split}
\end{equation}
Taking the supremum over unit $w$ and using \eqref{ineq:a-cosine-loss} and \eqref{ineq:a-B-zero} yields
\begin{equation}\label{ineq:a-c-error}
 \|c_0\|^2\leq276\delta.
\end{equation}
Thus all three errors are controlled by the single constant in the theorem:
\begin{equation}\label{ineq:a-three-errors}
 |s-1|\leq d,\qquad
 \|\mathcal B-I\|\leq d,\qquad
 \|c_0\|^2\leq d,\qquad d=512\delta\leq\frac18.
\end{equation}
The hypothesis $\alpha\ge4096n^2=8\cdot512n^2$ gives $d\le1/8$, and hence $s>0$.

For the curvature calculation, choose the contour through the saddle $e$ at the point $y$ under consideration and hold it fixed during differentiation. By the symmetry $W(-z)=\overline{W(z)}$,
\begin{equation}\label{eq:a-fixed-contour}
 -D^2\log q_\alpha(y)
 =\frac1\alpha H_p^{-1/2}
 \left(\frac{\mathcal B}{s}+
       \frac{c_0c_0^{\mathsf T}}{s^2}\right)H_p^{-1/2}.
\end{equation}
Indeed $\nabla\log q_\alpha=e-\alpha^{-1/2}H_p^{-1/2}c_0/s$ on this contour, and subtraction of its square from $q_\alpha^{-1}\nabla^2q_\alpha$ yields \eqref{eq:a-fixed-contour}.

For $0\leq d\leq1/8$, the scalar comparisons needed in
\eqref{eq:a-fixed-contour} are
\begin{equation}\label{ineq:a-curvature-scalars}
 \frac{1-d}{1+d}\geq1-2d,
 \qquad
 \frac{1+d}{1-d}+\frac{d}{(1-d)^2}\leq1+4d.
\end{equation}
Both follow by clearing denominators. The bounds \eqref{ineq:a-three-errors} therefore imply \eqref{ineq:a-curvature}. Its lower coefficient is positive, giving strict log-concavity.

With the contour fixed, the integrand depends on $y$ only through $e^{\langle x+i\xi,y\rangle}$, and the envelope $K_\alpha$ has the moments needed to differentiate twice under the integral. The kernel is thus continuous on $\mathbb R^n$, supported in $K$, and positive on $\operatorname{int}K$. Since $K$ is the closure of its interior, continuity makes it nonnegative on the boundary as well. The Laplace identity is consequently a positive representation:
\begin{equation}\label{eq:a-laplace}
 p(x)^{-\alpha}=\int_K e^{-\langle x,y\rangle}q_\alpha(y)\,dy,
 \qquad x\in C.
\end{equation}
Differentiating this positive Laplace integral in directions $u_1,\ldots,u_k\in C$, we obtain
\begin{equation}\label{ineq:a-complete-monotonicity}
 (-1)^kD_{u_1}\cdots D_{u_k}p(x)^{-\alpha}
 =\int_K\prod_{j=1}^k\langle u_j,y\rangle
          e^{-\langle x,y\rangle}q_\alpha(y)\,dy\geq0.
\end{equation}
Finiteness and differentiation follow by choosing a sufficiently small neighborhood of $x$ inside $C$; polynomial moments are absorbed by the neighboring exponential factors. This completes the proof of Theorem~\ref{thm:main}.

\section{Consequences of positivity}\label{sec:consequences}
\subsection{Lineality and the one-dimensional case}\label{subsec:lineality}
Let now $C$ be a hyperbolicity cone whose closure need not be pointed, and let
\begin{equation}\label{eq:a-lineality}
 L=\overline C\cap(-\overline C).
\end{equation}
For an open convex cone, addition of its lineality space preserves the cone. Thus $x+\mathbb Rv$ lies in $C$ whenever $x\in C$ and $v\in L$. Its polynomial restriction is real-rooted and positive on the whole real line, so it is constant. Consequently $D_vp=0$ on the open set $C$ and hence as a polynomial identity. The polynomial descends to the quotient by $L$. The quotient hyperbolicity cone has pointed closure, and its dimension is the effective dimension of the problem.

Applying Theorem~\ref{thm:main} in the quotient gives a positive representing density there. Its pushforward to $L^\perp$ is the representing measure in the original space, and the strict log-concavity and curvature bounds hold on the relative interior of its support. Pullback to $C$ preserves complete monotonicity. 

In effective dimension one, after choosing the positive coordinate on the cone, the polynomial and kernel are
\begin{equation}\label{eq:a-one-dimensional}
 p(x)=cx^m,
 \qquad
 q_\alpha(y)=c^{-\alpha}\frac{y^{m\alpha-1}}{\Gamma(m\alpha)}
 \quad(y>0),\qquad c>0.
\end{equation}
The Laplace identity is the gamma integral. This gives complete monotonicity for every positive $\alpha$. Its logarithmic curvature is exactly
\begin{equation}\label{eq:a-one-dimensional-curvature}
 -\frac{d^2}{dy^2}\log q_\alpha(y)
 =\frac{m\alpha-1}{y^2}.
\end{equation}
Thus strict log-concavity in one dimension holds exactly when the exponent $m\alpha-1$ is positive.

\subsection{Exponential families}
The positive Riesz kernel supplies the base measure for the hyperbolic exponential family of \cite[Definition~3.6]{MSUZ}.

\begin{corollary}\label{cor:exponential-family}
Under the assumptions of Theorem~\ref{thm:main}, for every $x\in C$ the measure
\begin{equation}\label{eq:exponential-family-density}
 d\mu_{\alpha,x}(y)=p(x)^\alpha e^{-\langle x,y\rangle}q_\alpha(y)\,dy
\end{equation}
is a probability measure on $K$. For fixed $\alpha$, these measures form an exponential family with Laplace parameter $x$, base measure $q_\alpha(y)\,dy$, and log-partition function $A_\alpha(x)=-\alpha\log p(x)$. If $Y$ has distribution $\mu_{\alpha,x}$, then
\begin{equation}\label{eq:exponential-family-moments}
 \E_{\alpha,x}Y=\alpha\nabla\log p(x),\qquad
 \Cov_{\alpha,x}(Y)=\alpha H_p(x).
\end{equation}
Its density is strictly log-concave on $\operatorname{int}K$.
\end{corollary}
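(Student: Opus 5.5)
The proof rests on three inputs: the positive Laplace representation \eqref{eq:a-laplace} from Theorem~\ref{thm:main}, differentiation under the integral, and the curvature bound \eqref{ineq:a-curvature}.

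\emph{Normalization.} Fix $x\in C$. Since $q_\alpha\ge0$, the measure \eqref{eq:exponential-family-density} is nonnegative. Its total mass is $p(x)^\alpha\int_K e^{-\langle x,y\rangle}q_\alpha(y)\,dy=p(x)^\alpha p(x)^{-\alpha}=1$ by \eqref{eq:a-laplace}. So it is a probability measure on $K$.

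\emph{Exponential-family form.} Write the density as $\exp(-\langle x,y\rangle-A_\alpha(x))\,q_\alpha(y)$ with $A_\alpha(x)=-\alpha\log p(x)$. This is literally an exponential family with natural parameter $-x$ (Laplace parameter $x$), base measure $q_\alpha\,dy$, and log-partition function $A_\alpha$. The identity $A_\alpha(x)=\log\int e^{-\langle x,y\rangle}q_\alpha(y)\,dy$ is again \eqref{eq:a-laplace}.

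\emph{Moments.} Differentiate $F(x)=\int_K e^{-\langle x,y\rangle}q_\alpha(y)\,dy=p(x)^{-\alpha}$ under the integral sign. This is justified as in the end of the proof of Theorem~\ref{thm:main}: for $x$ in a small ball inside $C$, polynomial factors in $y$ are absorbed by $e^{-\langle x',y\rangle}$ with $x'$ slightly smaller in the cone order, since $\langle x-x',y\rangle\ge c\|y\|$ on $K$. Then $\nabla F=-\int y\,e^{-\langle x,y\rangle}q_\alpha$, so
\[
\E_{\alpha,x}Y=-\nabla\log F=\alpha\nabla\log p(x).
\]
For the second moment, differentiate $\log F$ twice. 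The standard identity $D^2\log F=\E[YY^{\mathsf T}]-\E Y\,\E Y^{\mathsf T}$ gives
\[
\Cov_{\alpha,x}(Y)=D^2\log F=-\alpha D^2\log p(x)=\alpha H_p(x).
\]
This is positive definite by \eqref{ineq:a-hessian-positive}.

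\emph{Log-concavity.} The density equals $q_\alpha(y)$ times the log-affine factor $p(x)^\alpha e^{-\langle x,y\rangle}$. Hence $-D_y^2\log$ of the density equals $-D^2\log q_\alpha(y)$. By \eqref{ineq:a-curvature} with $d\le1/8$, this is bounded below by $\frac{1-2d}{\alpha}H_p(e_{\alpha,y})^{-1}\succ0$ on $\operatorname{int}K$, so the density is strictly log-concave there.

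The only point needing care is the justification of differentiating under the integral. It is routine given the exponential domination argument already used in the proof of \eqref{ineq:a-complete-monotonicity}.
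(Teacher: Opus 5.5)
Your proposal is correct and follows essentially the paper's route: normalization from the Laplace identity \eqref{eq:a-laplace}, mean and covariance from derivatives of $-\alpha\log p$, and strict log-concavity because the affine exponential tilt leaves $-D^2\log q_\alpha$ unchanged. The paper phrases the moment step through the cumulant generating function $\log\E_{\alpha,x}e^{\langle t,Y\rangle}=\alpha\log p(x)-\alpha\log p(x-t)$ at $t=0$, which is the same computation as your differentiation of $\log F$ in $x$; your explicit domination argument for differentiating under the integral is detail the paper leaves implicit.
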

\begin{proof}
Normalization follows from \eqref{eq:a-laplace}. For $t$ in a neighborhood of zero with $x-t\in C$, the same identity yields
\begin{equation}\label{eq:exponential-family-cumulant}
 \log\E_{\alpha,x}e^{\langle t,Y\rangle}
 =\alpha\log p(x)-\alpha\log p(x-t).
\end{equation}
Differentiating at zero gives the mean and covariance in \eqref{eq:exponential-family-moments}. Multiplication by the positive exponential of an affine function leaves the logarithmic Hessian of $q_\alpha$ unchanged, so strict log-concavity follows from Theorem~\ref{thm:main}.
\end{proof}

\subsection{Concave roots and completely monotone powers}
\begin{corollary}[Concave root]\label{cor:concave-root}
Under the assumptions of Theorem~\ref{thm:main}, the function
$q_\alpha^{1/(m\alpha-n)}$ is concave on $\operatorname{int}K$.
\end{corollary}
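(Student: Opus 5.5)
The plan is to combine the homogeneity of $q_\alpha$ with its strict log-concavity from Theorem~\ref{thm:main}, using the standard fact that a positive function on an open convex cone, homogeneous of positive degree $k$ and log-concave, has a concave $k$-th root. Put $k=m\alpha-n$. Since $\alpha\ge4096n^2$ and $m\ge1$, we have $k>0$. The first step is the homogeneity itself. For $t>0$, apply the substitution $y\mapsto ty$ to the Laplace identity \eqref{eq:a-laplace} and use $p(x/t)^{-\alpha}=t^{m\alpha}p(x)^{-\alpha}$. Uniqueness of the representing measure then gives $q_\alpha(ty)=t^{m\alpha-n}q_\alpha(y)$. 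Alternatively, one can read this off from the inverse integral \eqref{eq:a-inverse} by rescaling $x$ and $\xi$.

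Next set $f=q_\alpha^{1/k}$ on $\operatorname{int}K$; it is positive and $C^2$, and $g=\log f=\tfrac1k\log q_\alpha$ is concave by \eqref{ineq:a-curvature}. The function $f$ is homogeneous of degree one. To prove concavity, I would use the superlevel set argument. Take $y_1,y_2\in\operatorname{int}K$ and put $u_i=y_i/f(y_i)$, so that $f(u_i)=1$. By log-concavity, $f\ge1$ on the segment $[u_1,u_2]$. Write $\lambda=f(y_1)/(f(y_1)+f(y_2))$. Then
\[
 y_1+y_2=(f(y_1)+f(y_2))\bigl(\lambda u_1+(1-\lambda)u_2\bigr),
\]
and homogeneity gives $f(y_1+y_2)\ge f(y_1)+f(y_2)$. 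Superadditivity together with degree-one homogeneity is exactly concavity.

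As a differential alternative, one can compute
\[
 D^2f=f\bigl(D^2g+Dg\otimes Dg\bigr).
\]
Euler's relation gives $Dg(y)[y]=1$. One then checks that the form is negative semidefinite by splitting along $y$ and along $\ker Dg(y)$. The superlevel argument avoids this bookkeeping.

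I expect no step to be a real obstacle. The only care points are verifying $k>0$ and confirming that the homogeneity degree is $m\alpha-n$ rather than $m\alpha$. That degree comes from the Jacobian $t^{-n}$ in the change of variables $y\mapsto ty$.
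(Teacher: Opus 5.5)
Your proposal is correct and follows essentially the same argument as the paper. The paper obtains the homogeneity $q_\alpha(ty)=t^{m\alpha-n}q_\alpha(y)$ from the inverse integral, normalizes points to the level set $\{h=1\}$, and uses log-concavity together with homogeneity. The only difference is that the paper works directly with the weights $(1-t)h(x)$ and $th(y)$, whereas you first prove superadditivity and then apply it to $(1-t)x$ and $ty$; these are the same argument.
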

\begin{proof}
Changing variables in \eqref{eq:a-inverse} and using contour independence, we obtain
\begin{equation}\label{eq:a-kernel-homogeneity}
 q_\alpha(ty)=t^{m\alpha-n}q_\alpha(y)\qquad(t>0).
\end{equation}
Put $k=m\alpha-n$ and $h=q_\alpha^{1/k}$. The parameter bound ensures $k>0$. The function $h$ is positive, homogeneous of degree one, and log-concave. For $x,y\in\operatorname{int}K$ and $0<t<1$, put
$A=(1-t)h(x)+th(y)$ and $\theta=th(y)/A$. Then
\begin{equation}\label{eq:a-root-normalization}
 \frac{(1-t)x+ty}{A}
 =(1-\theta)\frac{x}{h(x)}+\theta\frac{y}{h(y)}.
\end{equation}
Both normalized vectors on the right have $h$-value one. Log-concavity and homogeneity therefore give
\begin{equation}\label{ineq:a-root-concavity}
 h((1-t)x+ty)\geq A=(1-t)h(x)+th(y).
\end{equation}
The endpoints are immediate.
\end{proof}

\begin{corollary}\label{cor:hyperbolicity-cm}
Let $p$ be a nonconstant homogeneous real polynomial positive on a nonempty open convex cone $C$. Then $p$ is hyperbolic with respect to every point of $C$ if and only if $p^{-\alpha}$ is completely monotone on $C$ for some positive $\alpha$.
\end{corollary}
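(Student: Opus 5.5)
The plan is to prove the two implications separately, using Theorem~\ref{thm:main} (via the lineality reduction of Section~\ref{subsec:lineality}) for one direction and a Scott--Sokal type argument for the other.

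\emph{Hyperbolicity implies complete monotonicity.} Suppose $p$ is hyperbolic with respect to every point of $C$, and fix $e\in C$. Let $C_e$ be the hyperbolicity cone of $p$ at $e$, namely the connected component of $\{p>0\}$ containing $e$. Since $C$ is connected, lies in $\{p>0\}$ and contains $e$, we have $C\subseteq C_e$.

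Quotient by the lineality space $L$ of $\overline{C_e}$ as in Section~\ref{subsec:lineality}. The resulting polynomial is complete hyperbolic in effective dimension $n'$. If $n'\ge2$, Theorem~\ref{thm:main} applies with any $\alpha\ge4096n'^2$. If $n'=1$, the explicit gamma formula \eqref{eq:a-one-dimensional} applies with any $\alpha>0$. Pulling back gives complete monotonicity of $p^{-\alpha}$ on $C_e$.

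Complete monotonicity then restricts to the subcone $C$. The directions $v_j\in C$ in \eqref{ineq:pre-cm} lie in $C_e$, so the sign conditions on $C_e$ imply those on $C$.

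\emph{Complete monotonicity implies hyperbolicity.} Suppose $p^{-\alpha}$ is completely monotone on $C$ for some $\alpha>0$. By the cone Bernstein--Hausdorff--Widder--Choquet theorem there is a positive measure $\mu$ on $C^*$ with
\[
p(x)^{-\alpha}=\int e^{-\langle x,y\rangle}\,d\mu(y),\qquad x\in C.
\]
This Laplace integral converges absolutely on the tube $T_C=C+i\R^n$, where it defines a holomorphic function $F$. The key step is to show $F$ has no zeros on $T_C$.

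Fix $x\in C$ and $\xi\in\R^n$, and consider the one-variable function
\[
g(w)=p(x+w\xi)^{-\alpha}=p(x+w\xi)^{-\alpha}.
\]
Here I would instead argue with the homogeneity and holomorphy directly. The function $F^{-1/\alpha}$ agrees with $p$ on $C$, because on $C$ we have $F=p^{-\alpha}$ and the real positive branch of the root is used there. Continuing along paths, $F^{-1/\alpha}$ equals $p$ wherever $F$ is zero-free, provided the branch continues consistently.

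So suppose $p(z_0)=0$ for some $z_0\in T_C$. I would pick a point $z_1\in T_C$ nearest to $C$ along a segment from $C$, such that $p$ is nonzero on the segment before $z_1$. Near $z_1$ along the segment, $|F|=|p|^{-\alpha}$ tends to infinity. This contradicts the bound $|F(z)|\le F(\operatorname{Re}z)<\infty$, which is locally bounded in $T_C$.

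Hence $p$ is zero-free on $T_C$. This is the Scott--Sokal argument \cite[Corollary~2.3]{SS}, which I would cite and adapt.

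\emph{From zero-freeness to real roots.} Given $x\in C$ and $v\in\R^n$, suppose $t\mapsto p(v+tx)$ had a nonreal root $t=a+ib$ with $b\ne0$. By homogeneity $p(x+(v+ax)/(ib))=0$ after division by $(ib)^m$. But
\[
x+\frac{v+ax}{ib}=x-i\,\frac{v+ax}{b}\in T_C,
\]
a contradiction. Since $p(x)>0$, this shows $p$ is hyperbolic with respect to every $x\in C$.

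\emph{Main obstacle.} I expect the main obstacle to be the branch bookkeeping in the zero-freeness step, namely ensuring that $F$ is the continuation of $p^{-\alpha}$ up to the first zero. I would handle it by restricting to the complex line $w\mapsto x+w\xi$ over a disc. There, $F$ and $p$ are holomorphic functions of one variable, and the identity $F\cdot p^{\alpha}=1$ holds on a real segment. It therefore persists along the connected region avoiding the zeros of $p$, which forces the blow-up contradiction.
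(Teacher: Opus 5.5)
Your proposal is correct and follows the paper's proof. The forward direction enlarges $C$ to the hyperbolicity cone, reduces by lineality, applies Theorem~\ref{thm:main} or the gamma integral \eqref{eq:a-one-dimensional}, and restricts to $C$; the converse obtains zero-freeness of $p$ on $T_C$ and then uses the same homogeneity rescaling to the point $x-i(v+ax)/b$. The only difference is that the paper cites \cite[Corollary~2.3]{SS} for the tube zero-freeness, whereas you sketch it. Your one-variable version of that sketch is sound: restrict to $w\mapsto x_0+w\eta_0$ with $z_0=x_0+i\eta_0$, continue $F=p^{-\alpha}$ from the real axis up the segment $w=is$ until the first zero, and use $|F(z)|\le F(\operatorname{Re}z)=p(x_0)^{-\alpha}$.
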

\begin{proof}
If $p$ is hyperbolic on $C$, enlarge $C$ to its containing hyperbolicity cone, reduce by lineality, and apply Theorem~\ref{thm:main} or the one-dimensional gamma integral. Complete monotonicity remains true on restriction to $C$.

Conversely, if a negative power is completely monotone, then $p$ has no zero in $T_C$ by \cite[Corollary~2.3]{SS}. Homogeneity turns this tube nonvanishing into real-rootedness. Indeed, if $p(v+te)=0$ for $e\in C$ and $t=a+ib$ with real $b\ne0$, multiplication of the argument by $(ib)^{-1}$ would give a zero at
\begin{equation}\label{eq:a-tube-zero}
 e-i(v+ae)/b\in T_C,
\end{equation}
a contradiction. Thus all zeros of $t\mapsto p(v+te)$ are real.
\end{proof}

\begin{corollary}\label{cor:half-plane-cm}
Let $P$ be a homogeneous real polynomial, positive on the open orthant,
with the half-plane property. Then $P^{-\alpha}$ is completely monotone
on the orthant for some $\alpha>0$.
\end{corollary}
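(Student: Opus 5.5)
The plan is to reduce Corollary~\ref{cor:half-plane-cm} to Corollary~\ref{cor:hyperbolicity-cm}, with $C$ the open positive orthant $\R^n_{>0}$. That cone is nonempty, open and convex, and $P$ is positive on it by hypothesis. If $P$ is constant, it is a positive constant $c$, and $c^{-\alpha}$ is trivially completely monotone for every $\alpha>0$ (all derivatives of order $k\ge1$ vanish). So we may assume $P$ is nonconstant, of degree $m\ge1$.

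The key step is to verify that $P$ is hyperbolic with respect to every point $e$ of the orthant. This is the equivalence recorded in Section~\ref{sec:background}; the argument is the reverse of the computation in \eqref{eq:a-tube-zero}. Suppose $P(v+te)=0$ for some real $v$ and some $t=a+ib$ with $b\ne0$. By homogeneity, $P$ also vanishes at
\[
 (ib)^{-1}(v+ae)+e=e-i(v+ae)/b .
\]
This point has real part $e$, which lies in the open orthant. It therefore lies in the right half-plane tube $T_C=C+i\R^n$, contradicting the half-plane property. Hence all zeros of $t\mapsto P(v+te)$ are real. Also $P(e)>0$ because $e\in C$. If the restriction loses degree, the missing roots are absorbed as in \eqref{eq:a-root-factorization} and cause no problem.

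Corollary~\ref{cor:hyperbolicity-cm} then applies directly: $P$ is a nonconstant homogeneous real polynomial, positive on $C$ and hyperbolic with respect to every point of $C$. Hence $P^{-\alpha}$ is completely monotone on $C$ for some $\alpha>0$. Tracing the proof of that corollary gives an explicit exponent. Enlarge the orthant to the hyperbolicity cone containing it and quotient by the lineality space, leaving effective dimension $n'\le n$. If $n'\ge2$, Theorem~\ref{thm:main} gives any $\alpha\ge4096n'^2$. If $n'=1$, the gamma integral \eqref{eq:a-one-dimensional} gives every $\alpha>0$. Complete monotonicity then restricts from the larger cone to the orthant.

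There is essentially no hard step. The only points needing care are the constant case and checking that the rotated zero really lies in the tube, i.e.\ that its real part is exactly $e$.
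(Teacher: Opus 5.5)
Your proposal is correct and follows the paper's proof. You dispose of the constant case, deduce hyperbolicity at every point of the orthant from the half-plane property by homogeneity (your rotation to $e-i(v+ae)/b$ is the computation in \eqref{eq:a-tube-zero} read in the other direction), and then apply Corollary~\ref{cor:hyperbolicity-cm}. One small point: the caveat about loss of degree is unnecessary, because the coefficient of $t^m$ in $P(v+te)$ is $P(e)>0$.
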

\begin{proof}
If $P$ is nonconstant, the half-plane property and homogeneity imply
that $P$ is hyperbolic with respect to every point of the open orthant.
Apply Corollary~\ref{cor:hyperbolicity-cm} in that case; a positive
constant has completely monotone negative powers directly.
\end{proof}

\subsection{The Lorentz polynomial}\label{subsec:lorentz}
The quadratic order in dimension of the Gaussian cubic moment is already attained by the Lorentz polynomial. For
\begin{equation}\label{eq:a-Lorentz}
 p(x)=x_0^2-\sum_{j=1}^{n-1}x_j^2,
 \qquad e=(1,0,\ldots,0),
\end{equation}
we have $H_p(e)=2I$. If $Z_0$ is a standard Gaussian vector, the cubic in \eqref{eq:a-cubic} is
\begin{equation}\label{eq:a-Lorentz-cubic}
 P(Z_0)=\frac{Z_{0,0}^3+
                  3Z_{0,0}\sum_{j=1}^{n-1}Z_{0,j}^2}{\sqrt2}.
\end{equation}
Independence of $Z_{0,0}$ and the remaining coordinates, together with the Gaussian moments of orders two, four, and six, gives
\begin{equation}\label{eq:a-Lorentz-second-moment}
 \mathbb EP(Z_0)^2=\frac{9n^2+18n-12}{2}.
\end{equation}
On the other hand Lemma~\ref{lem:cubic} gives, for every complete hyperbolic polynomial,
\begin{equation}\label{ineq:a-cubic-uniform}
 \mathbb EP(Z_0)^2\leq\frac{1215}{4}n^2.
\end{equation}
For the normalized probability measure proportional to $K_\alpha(z)\,dz$, the exact multiplier for this cubic square is
\begin{equation}\label{eq:a-cubic-student}
 \mathbb E_{\nu_{\alpha,n}}P^2
 =\frac{\alpha^3}{(\alpha-n-2)(\alpha-n-4)(\alpha-n-6)}
   \mathbb EP(Z_0)^2.
\end{equation}
Its range of validity is $\alpha>n+6$. If $\alpha\geq2(n+6)$, each of the three factors lies between one and two, so
\begin{equation}\label{ineq:a-cubic-multiplier}
 1\leq\frac{\alpha^3}{(\alpha-n-2)(\alpha-n-4)(\alpha-n-6)}\leq8.
\end{equation}

The Riesz kernel is explicit in this case, so the Gaussian comparison can also be computed directly. For the Lorentz polynomial at $y=2\alpha e$, the explicit cone gamma integral in \cite[Proposition~5.6]{SS}, followed by Stirling's formula, yields
\begin{equation}\label{eq:a-Lorentz-Stirling}
 \frac{q_\alpha(2\alpha e)}{Q_\alpha(2\alpha e)}
 =\frac{2\pi\alpha^{2\alpha-n/2}e^{-2\alpha}}
        {\Gamma(\alpha)\Gamma(\alpha+1-n/2)}
 =1-\frac{3n^2-6n+4}{24\alpha}+O_n(\alpha^{-2}).
\end{equation}
The coefficient follows by adding the two $\alpha^{-1}$ terms in the shifted logarithmic Stirling expansion: $B_2(0)/2+B_2(1-n/2)/2=(3n^2-6n+4)/24$, where $B_2(t)=t^2-t+1/6$. The asymptotic is for each fixed dimension as $\alpha$ tends to infinity. The coefficient has order $n^2$, giving the dimension dependence of the Gaussian comparison. Positivity thresholds are determined separately by the admissible sets below.

For comparison, the known exact positivity ranges are as follows. According to \cite[Theorem~1.3(a)]{SS}, real symmetric determinants satisfy
\begin{equation}\label{eq:a-determinant-positivity}
 G(\det_m)=\{j/2:j\in\mathbb Z_{\geq0}\}
             \cup[(m-1)/2,\infty).
\end{equation}
For the elementary symmetric quadratic, \cite[Corollary~1.10]{SS} yields
\begin{equation}\label{eq:a-quadratic-positivity}
 G(E_{2,n})=\{0\}\cup[(n-2)/2,\infty).
\end{equation}
For a product of linear forms positive on the cone, every negative power with positive exponent is completely monotone, by multiplying the one-dimensional gamma representations. 

\section{The specialized V\'amos quartic}\label{sec:vamos}
Label the elements of $V_8$ in pairs so that its five nonbases are
$\{1,2,5,6\}$, $\{1,2,7,8\}$, $\{3,4,5,6\}$,
$\{3,4,7,8\}$, and $\{5,6,7,8\}$.  With this labeling its basis
polynomial has the specialization
\begin{align}\label{eq:vamos-polynomial}
 p(a,b,c,d)&=h_{V_8}(a,a,b,b,c,c,d,d)\\
 &=a^2b^2+4(a+b+c+d)(abc+abd+acd+bcd).
\end{align}
The five deleted monomials remove all squared pair products except
$a^2b^2$ from the degree-four elementary symmetric polynomial in the
eight repeated variables.  

Wagner and Wei established the half-plane property of $h_{V_8}$ by their criterion and the V\'amos calculation \cite[Theorem~3]{WagnerWei}.
Consequently $p$ is stable and hyperbolic in every positive direction.
For $e_\epsilon=(1,1,\epsilon,\epsilon)$, the univariate polynomials
$p(x+te_\epsilon)$ have only real roots.  Their coefficients converge
as $\epsilon$ tends to zero, and $p(1,1,0,0)=1$, so their degree-four
limits are also real-rooted.  Hence $p$ is hyperbolic with respect to $e=(1,1,0,0)$.

The obstruction to definite determinantal representations of positive
integer powers is due to Br\"and\'en
\cite[Theorem~3.3]{Branden}; the passage to this specialization
is recorded by Kummer \cite[Remark~1]{Kummer}.
The closure of the hyperbolicity cone is spectrahedral, as shown by Kummer \cite[Section~3]{Kummer}.

At $e$, direct differentiation yields
\begin{equation}\label{eq:vamos-gradient}
 p(e)=1,\qquad \nabla\log p(e)=(2,2,8,8)^{\mathsf T},
\end{equation}
\begin{equation}\label{eq:vamos-hessian}
 H_p(e)=\begin{pmatrix}
 2&0&4&4\\0&2&4&4\\4&4&56&40\\4&4&40&56
 \end{pmatrix}.
\end{equation}
The leading principal minors are $2,4,160,4096$, all positive.
Thus $H_p(e)$ is positive definite.  A lineality direction of the
hyperbolicity cone would be a zero direction of this Hessian; hence
$p$ is complete on this cone.
For
\begin{equation}\label{ineq:c-vamosthreshold}
 \alpha\ge4096\cdot4^2=65536,
\end{equation}
Theorem~\ref{thm:main} yields a positive, strictly log-concave density whose exponential tilt
at $e$ is a probability law with mean
$\alpha(2,2,8,8)^{\mathsf T}$ and covariance $\alpha H_p(e)$.
At $y_\alpha=\alpha(2,2,8,8)^{\mathsf T}$ the saddle is $e$ and
\begin{equation}\label{eq:vamos-Q}
 Q_\alpha(y_\alpha)=\frac{e^{4\alpha}}{64(2\pi\alpha)^2}.
\end{equation}
In particular,
\begin{equation}\label{ineq:c-vamoscomparison}
 \left|\frac{q_\alpha(y_\alpha)}{Q_\alpha(y_\alpha)}-1\right|
 \le\frac{8192}{\alpha}.
\end{equation}
Here $n=4$ and $\sqrt{\det H_p(e)}=64$.

\section{Spectral geometry}\label{sec:spectral}
Throughout the rest of the paper $p$ is complete, $C$ is its full hyperbolicity cone, $K=C^*$, and $C^\vee=\operatorname{int}K$. We write $V=\R^n$, $m=\deg p$, and $H_x=H_p(x)$. Unless stated otherwise $n,m\ge2$. Pairings between $V$ and its dual use the fixed Euclidean coordinates; the notation $\langle u,v\rangle_e=H_e[u,v]$ and $\|v\|_e$ refers to the Hessian metric.

Order the characteristic roots $\lambda_1^e(v)\ge\cdots\ge\lambda_m^e(v)$ and put $S_{k,e}(v)=\sum_j\lambda_j^e(v)^k$, with $S_{0,e}=m$. By the root factorization \eqref{eq:a-root-factorization},
\begin{equation}\label{eq:n-2-4}
 D^k\log p(e)[v^k]=(-1)^{k-1}(k-1)!S_{k,e}(v).
\end{equation}
Newton identities show that every $S_{k,e}$ is a homogeneous polynomial of degree $k$. Logarithmic homogeneity also implies
\begin{equation}\label{eq:n-2-5}
 H_e[v,v]=S_{2,e}(v),\qquad H_e[e,v]=D\log p(e)[v],\qquad H_e[e,e]=m.
\end{equation}

\subsection{Two-dimensional determinantal representations}

\begin{lemma}[Slice representation]\label{thm:n-2-1}
For every \(e\in C\) and \(u,v\in V\), there exist real symmetric \(m\times m\) matrices \(A,B\) such that
$p(e+su+tv)/p(e)=\det(I+sA+tB)$.

Their spectra are respectively \(\lambda^e(u)\) and \(\lambda^e(v)\), including zeros.
\end{lemma}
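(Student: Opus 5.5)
The plan is to invoke the Helton--Vinnikov theorem, in the form proved by Lewis, Parrilo and Ramana: every homogeneous polynomial in three variables that is hyperbolic with respect to a point $e_0$ and satisfies $f(e_0)=1$ admits a definite determinantal representation
\[
 f(w_0,w_1,w_2)=\det(w_0A_0+w_1A_1+w_2A_2),
\]
with real symmetric $m\times m$ matrices and $A_0$ evaluated appropriately at $e_0$. We apply it to the ternary form
\[
 f(r,s,t)=p(re+su+tv)/p(e),
\]
which is homogeneous of degree $m$ and satisfies $f(1,0,0)=1$. The form $f$ is hyperbolic with respect to $(1,0,0)$: for fixed $(r,s,t)$, the univariate polynomial $\tau\mapsto f((r,s,t)+\tau(1,0,0))=p(re+su+tv+\tau e)/p(e)$ has only real zeros, because $p$ is hyperbolic with respect to $e\in C$ (Gårding).

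The degree of $f$ is exactly $m$, since $f(1,0,0)=1$, even though $f$ may depend on fewer than three variables. The Helton--Vinnikov theorem for ternary hyperbolic forms therefore gives real symmetric matrices $A_0,A,B$ with $A_0\succ0$ and $f(r,s,t)=\det(rA_0+sA+tB)$. Evaluating at $(1,0,0)$ gives $\det A_0=1$. Conjugating by $A_0^{-1/2}$, that is, replacing $A,B$ by $A_0^{-1/2}AA_0^{-1/2}$ and $A_0^{-1/2}BA_0^{-1/2}$, preserves symmetry and the determinant identity, and turns $A_0$ into $I$. Setting $r=1$ then yields $p(e+su+tv)/p(e)=\det(I+sA+tB)$.

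For the spectra, put $t=0$. This gives $\prod_j(1+s\mu_j)=\det(I+sA)=p(e+su)/p(e)$, where $\mu_1,\dots,\mu_m$ are the eigenvalues of $A$ counted with multiplicity. Comparing with the root factorization \eqref{eq:a-root-factorization}, the multisets $\{\mu_j\}$ and $\{\lambda_j(u)\}$ are both determined by the same polynomial in $s$. The nonzero parameters are the negative reciprocals of its roots, with multiplicity. Since both multisets have exactly $m$ elements, the number of zeros agrees as well. The same argument applies to $B$ and $v$.

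We also need that the root parameters of \eqref{eq:a-root-factorization} agree with the characteristic roots $\lambda^e$ defined through $p(te-v)$. This follows by substituting $t\mapsto -1/t$ and clearing denominators, so the two spectra coincide including zeros.

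The only substantive step is the Helton--Vinnikov input, which we cite. The care needed is in the degenerate cases: when $u,v,e$ are linearly dependent, or when $f$ has lost variables, the theorem is still applied to $f$ as a form in three variables, and it holds for such forms, including non-reduced ones. If one prefers to avoid this, a perturbation-and-compactness argument also works. Perturb $u,v$ to generic directions, note that the matrices stay bounded because $\operatorname{tr}A^2=\sum_j\lambda_j(u)^2$ is controlled, and pass to a limit.
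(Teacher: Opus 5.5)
Your proof is correct and follows the paper's route. Both arguments apply Helton--Vinnikov to the slice polynomial $p(e+su+tv)/p(e)$ and read the spectra off the coordinate axes. Your use of the homogeneous Lewis--Parrilo--Ramana form, in which $f$ has degree exactly $m$, simply replaces the paper's zero-block padding for the case where the affine slice drops degree.

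Your optional perturbation fallback is unnecessary and, as sketched, incomplete. Moving $u,v$ cannot remove repeated factors of $p$, so it would not reach the non-reduced case. The cited theorem already covers non-reduced forms, so the main argument stands without it.
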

\begin{proof}
Every restriction of the polynomial on the left to a line through the origin has only real zeros, by hyperbolicity at $e$. The two-variable representation theorem of Helton and Vinnikov \cite[Theorem~2.2 and Section~4]{HV} therefore gives a real symmetric pencil of size equal to its degree. If this degree is smaller than $m$, adjoining zero blocks gives matrices of size $m$; products and repeated factors are represented by direct sums. Restriction to the coordinate axes identifies the two spectra, including zero roots. 

Differentiating the logarithmic determinant, we obtain
\begin{align}
 \operatorname{tr}(AB)&=H_e[u,v],\label{eq:n-2-9}\\
  \operatorname{tr}(A^k)&=S_{k,e}(u),\label{eq:n-2-10}\\
 \operatorname{tr}(A^2B)&=\tfrac12D^3\log p(e)[u,u,v].\notag
\end{align} If \(v\in \overline C\), its characteristic roots are nonnegative, so \(B\succeq0\). \end{proof}

The representation is attached to the pair $(u,v)$. We will also use the derivative identity $\frac1k DS_{k,e}(u)[v]=\tr(A^{k-1}B)$. Indeed, $S_{k,e}(u+tv)=\tr(A+tB)^k$, and the $k$ terms in its derivative have the same trace by cyclicity.

The sharpened Cauchy--Schwarz inequality \eqref{eq:n-2-12} is due to Bauschke, G\"uler, Lewis and Sendov \cite[Proposition~4.4]{BGLS}; the slice representation gives a short proof.

\begin{lemma}[Spectral metric inequalities]\label{thm:n-2-2}
With the \(H_e\) inner product,
\begin{equation}\label{eq:n-2-12}
\langle u,v\rangle_e
\le
\sum_{j=1}^m\lambda_j^e(u)\lambda_j^e(v),
\end{equation}
and consequently
\begin{equation}\label{eq:n-2-13}
\|\lambda^e(u)-\lambda^e(v)\|_2
\le
\|u-v\|_e.
\end{equation}

Moreover,
\begin{equation}\label{eq:n-2-14}
H_e\overline C\subseteq K.
\end{equation}

For every \(x\in C\),
\begin{equation}\label{eq:n-2-15}
H_x\succeq
\frac{H_e}{\lambda_1^e(x)^2},
\qquad
H_x^{-1}\preceq
\lambda_1^e(x)^2H_e^{-1}.
\end{equation}
\end{lemma}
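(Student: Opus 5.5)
Each claim comes from the slice representation of Lemma~\ref{thm:n-2-1} combined with an eigenvalue inequality for real symmetric matrices.

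\emph{Inequality \eqref{eq:n-2-12}.} Apply Lemma~\ref{thm:n-2-1} to the pair $(u,v)$. This gives symmetric $A,B$ with spectra $\lambda^e(u)$ and $\lambda^e(v)$, and by \eqref{eq:n-2-9}, $\operatorname{tr}(AB)=H_e[u,v]=\langle u,v\rangle_e$. The von Neumann (or Fan) trace inequality says $\operatorname{tr}(AB)\le\sum_j\lambda_j(A)\lambda_j(B)$ when both spectra are listed in decreasing order. Since the characteristic roots are ordered decreasingly, this is \eqref{eq:n-2-12}.

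\emph{Inequality \eqref{eq:n-2-13}.} Expand the square:
\[
\|\lambda^e(u)-\lambda^e(v)\|_2^2=S_{2,e}(u)+S_{2,e}(v)-2\sum_j\lambda_j^e(u)\lambda_j^e(v).
\]
By \eqref{eq:n-2-5}, $S_{2,e}=\|\cdot\|_e^2$, and by \eqref{eq:n-2-12} the cross sum is at least $\langle u,v\rangle_e$. Hence the right side is at most $\|u\|_e^2+\|v\|_e^2-2\langle u,v\rangle_e=\|u-v\|_e^2$.

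\emph{Inclusion \eqref{eq:n-2-14}.} Take $v\in\overline C$ and $x\in C$. Apply Lemma~\ref{thm:n-2-1} to the pair $(x,v)$. Both vectors lie in $\overline C$, so the end of that lemma gives $A\succeq0$ and $B\succeq0$. Then $\langle x,H_ev\rangle=H_e[x,v]=\operatorname{tr}(AB)\ge0$. Thus $H_ev$ pairs nonnegatively with all of $C$, so $H_ev\in C^*=K$.

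\emph{Comparison \eqref{eq:n-2-15}.} This is the main step, because it compares Hessians at two different points. First I would verify that the root parameters of $p(x+tw)/p(x)$ are the eigenvalues of a pencil. Take the slice representation of the pair $(x,w)$ at $e$, but use the plane through $0$, $x$, $w$; homogeneity allows this. It gives $p(sx+tw)=p(e)\det(sA+tB)$ with $A$ representing $x$, where $A\succ0$ has spectrum $\lambda^e(x)$. Concretely, apply the lemma with $u=x-e$, $v=w$: then $p(e+s(x-e)+tw)/p(e)=\det(I+sA'+tB)$, and setting $s=1$ gives $p(x+tw)/p(e)=\det(A+tB)$ with $A=I+A'$. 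The spectrum of $A'$ is $\lambda^e(x-e)=\lambda^e(x)-1$ (shift of roots along $e$), so $A$ has eigenvalues $\lambda^e_j(x)>0$.

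Then
\[
H_x[w,w]=-\tfrac{d^2}{dt^2}\log\det(A+tB)\big|_0=\operatorname{tr}\bigl((A^{-1}B)^2\bigr)=\|A^{-1/2}BA^{-1/2}\|_F^2 .
\]
Since $A^{-1/2}\succeq\lambda_1^e(x)^{-1/2}I$, we have $\operatorname{tr}(A^{-1}BA^{-1}B)\ge\lambda_1^e(x)^{-2}\operatorname{tr}(B^2)$. This uses the monotonicity $\operatorname{tr}(XBXB)\ge\operatorname{tr}(YBYB)$ for $X\succeq Y\succeq0$, which I would check via $\|X^{1/2}BX^{1/2}\|_F$ and the polar form. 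Finally $\operatorname{tr}(B^2)=S_{2,e}(w)=H_e[w,w]$, which gives $H_x\succeq H_e/\lambda_1^e(x)^2$. Inverting the Loewner inequality gives the second form of \eqref{eq:n-2-15}.
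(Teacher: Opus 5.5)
Your proof is correct and follows essentially the same route as the paper. Both use slice representations together with the von Neumann/Theobald trace inequality for \eqref{eq:n-2-12}--\eqref{eq:n-2-14}, and a pencil $p(x+tw)/p(e)=\det(A+tB)$ with $A\succ0$ of spectrum $\lambda^e(x)$ for \eqref{eq:n-2-15}; you obtain the pencil via the shift $\lambda^e(x-e)=\lambda^e(x)-1$, whereas the paper homogenizes. Your loosely justified trace-monotonicity step becomes immediate if you diagonalize $A$ as the paper does: then $\operatorname{tr}(A^{-1}BA^{-1}B)=\sum_{i,j}B_{ij}^2/(a_ia_j)\ge\operatorname{tr}(B^2)/\lambda_1^e(x)^2$.
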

\begin{proof}
By Theobald's trace theorem \cite{Theobald1975}, real symmetric matrices satisfy $\tr(AB)\le\sum_j\lambda_j(A)\lambda_j(B)$, with equality exactly when one orthogonal basis diagonalizes both matrices in decreasing eigenvalue order. Applying this to the slice representation proves \eqref{eq:n-2-12}. Since $\|v\|_e^2=\|\lambda^e(v)\|_2^2$, expansion of the squared distances proves \eqref{eq:n-2-13}.

If \(u,v\in \overline C\), the representing matrices are positive semidefinite, so $H_e[u,v]=\operatorname{tr}(AB)\ge0$. This is \eqref{eq:n-2-14}.

For \eqref{eq:n-2-15}, homogenize a slice representation: $p(se+tx+ru)/p(e)=\det(sI+tA+rB)$. Here \(A\succ0\), with eigenvalues \(a_i=\lambda_i^e(x)\), and $H_e[u,u]=\operatorname{tr}B^2$. At \(x\),
\[
H_x[u,u]
=
\operatorname{tr}(A^{-1}BA^{-1}B)
=
\sum_{i,j}\frac{B_{ij}^2}{a_ia_j}
\ge
\frac{\operatorname{tr}B^2}{\lambda_1^e(x)^2}.
\]
Taking inverses gives the second inequality. \end{proof}
\subsection{Analytic root branches}\label{sec:root-branches}
Write $p=c\prod_\nu h_\nu^{a_\nu}$ with distinct real irreducible homogeneous factors. Since $p(e)\ne0$, each $h_\nu(e)\ne0$. The change of variables $(t,v)\mapsto(t,te-v)$ is a polynomial automorphism, so $h_\nu(te-v)$ is irreducible in $\R[t,v]$. Its leading coefficient as a polynomial in $t$ is the nonzero constant $h_\nu(e)$; Gauss's lemma therefore gives irreducibility over $\R(v)[t]$. In characteristic zero this polynomial is separable, and its discriminant is nonzero. Distinct factors remain nonassociate over $\R(v)$, so their pairwise resultants are also nonzero.

Outside the zeros of these finitely many polynomials in $v$, all factor roots are simple and roots of different factors are disjoint. The implicit function theorem gives real analytic local branches, whose multiplicities in $p$ are the fixed integers $a_\nu$. This is a dense open subset of $V$. For $h\in\overline C$, a slice for $(v,h)$ realizes the roots of $v+th$ as the eigenvalues of $A+tB$ with $B\succeq0$. The min--max formula makes every ordered eigenvalue nondecreasing in $t$; the same monotonicity holds for each analytic branch where it is defined.

\section{Spectral probability and admissible exponents}\label{sec:admissible}
We study the set $G(p)$ defined in \eqref{eq:n-3-1}. For an arbitrary positive $\alpha\in G(p)$, let $\mu_\alpha$ be the unique positive measure with Laplace transform $p^{-\alpha}$. The measure may be singular; its exponential tilt still has a radial and angular decomposition.

\subsection{Exponential tilt and radial normalization}

Let \(\alpha\in G(p)\setminus\{0\}\), and let \(\mu_\alpha\) be its positive Laplace measure. At \(e\in C\), define
\begin{equation}\label{eq:n-3-2}
d\mathbb P_{\alpha,e}(y)
=
p(e)^\alpha e^{-\langle e,y\rangle}\,d\mu_\alpha(y).
\end{equation}

The calculation in Corollary~\ref{cor:exponential-family} gives the logarithmic moment generating function and the first two cumulants; it applies equally to this possibly singular Laplace measure.

Let $S_e=\{u\in K:u(e)=1\}$. This is compact.

Homogeneity and uniqueness of the Laplace transform imply $\mu_\alpha(tA)=t^{m\alpha}\mu_\alpha(A)$. The measure has no atom at the origin. Writing \(y=ru\), with \(r=y(e)>0\), its homogeneous polar decomposition is a constant multiple of $r^{m\alpha-1}\,dr\,d\sigma(u)$. After the tilt \eqref{eq:n-3-2},
\begin{equation}\label{eq:n-3-6}
Y=RU,
\qquad
R\sim\Gamma(m\alpha,1),
\qquad
R\perp U,
\qquad
U\in S_e.
\end{equation}

For a direct polar decomposition, put $\kappa=m\alpha$ and define a finite measure $\nu$ on $S_e$ by $\nu(E)=\mu_\alpha\{ru:0<r\le1,\ u\in E\}$. Finiteness follows from the exponential tilt at $e$. Homogeneity gives mass $t^\kappa\nu(E)$ to the same set with $r\le t$. Taking differences in $t$ identifies the product measure $\kappa r^{\kappa-1}\,dr\,d\nu(u)$. After multiplication by $p(e)^\alpha e^{-r}$, normalization gives the gamma law and its independence from the angular variable in \eqref{eq:n-3-6}.

\Needspace{8\baselineskip}
\subsection{Common spectral Dirichlet law}

\begin{theorem}\label{thm:n-3-1}
For \(\alpha>0\), membership in \(G(p)\) is equivalent to the existence of a probability measure on \(S_e\) such that, for every \(v\in V\),
\begin{equation}\label{eq:n-3-7}
U(v)\stackrel d=
\sum_{j=1}^mW_j\lambda_j^e(v),
\qquad
W\sim\operatorname{Dir}(\alpha,\ldots,\alpha).
\end{equation}

The probability measure is unique, and
\begin{equation}\label{eq:n-3-8}
\operatorname{conv}(\operatorname{supp}U)=S_e.
\end{equation}
\end{theorem}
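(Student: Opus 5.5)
Both directions go through the polar decomposition \eqref{eq:n-3-6} and one-dimensional Laplace transforms along lines. The key identity is the following. For $v\in V$ and small real $t$ with $e-tv\in C$,
\[
\E e^{t\langle v,Y\rangle}=\frac{p(e)^\alpha}{p(e-tv)^\alpha}=\prod_{j=1}^m(1-t\lambda_j^e(v))^{-\alpha},
\]
which is the cumulant formula \eqref{eq:exponential-family-cumulant} combined with the root factorization \eqref{eq:a-root-factorization}. The right side is the moment generating function of $\sum_j G_j\lambda_j^e(v)$, where the $G_j$ are i.i.d.\ $\Gamma(\alpha,1)$. Write $G_j=RW_j$ with $R=\sum G_j\sim\Gamma(m\alpha,1)$ independent of $W\sim\operatorname{Dir}(\alpha,\ldots,\alpha)$. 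Since $\langle v,Y\rangle=R\,U(v)$ with $R$ independent of $U$ and of the same gamma law, we get the distributional identity $R\,U(v)\stackrel d=R\sum_jW_j\lambda_j^e(v)$.

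\emph{Forward direction.} Cancel the common gamma factor. Gamma laws have Mellin transform $\E R^{s}$ nonvanishing on a strip, and the variables involved are bounded, so moments determine the law. Concretely, compare $\E[(RX)^k]=\E R^k\,\E X^k$ for each $k$. This gives equality of all moments of $U(v)$ and $\sum_jW_j\lambda_j^e(v)$, and since both are bounded, equality in law. This is the step needing care, but boundedness makes moment determinacy immediate. It yields \eqref{eq:n-3-7}, with the law of $U$ on $S_e$ as the required probability measure.

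\emph{Converse.} Given a probability measure on $S_e$ satisfying \eqref{eq:n-3-7}, take $R\sim\Gamma(m\alpha,1)$ independent of $U$ and set $Y=RU$. Reversing the computation, for every $v$ in a neighbourhood of $0$ we get $\E e^{\langle v,Y\rangle}=p(e)^\alpha p(e-v)^{-\alpha}$. Put $\mu=p(e)^{-\alpha}e^{\langle e,y\rangle}\,\mathrm{law}(Y)$, a positive measure on $K$. Its Laplace transform agrees with $p^{-\alpha}$ near $e$. Both are real analytic on $C$: the Laplace transform is finite on $C$ because $K$-supported transforms that are finite at one interior point are finite on the convex set $C$, which can be justified by analytic continuation along segments with monotone convergence. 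Hence they agree on $C$, and $p^{-\alpha}$ is completely monotone, i.e.\ $\alpha\in G(p)$.

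\emph{Uniqueness.} The law of $U$ is determined by its one-dimensional projections $U(v)$ for all $v$, by Cramér--Wold.

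\emph{Convex hull.} Let $S'=\operatorname{conv}\operatorname{supp}U\subseteq S_e$, a compact convex set. If $S'\neq S_e$, there is $u_0\in S_e\setminus S'$. Separate $u_0$ from $S'$ by a vector $v$ with $\max_{S'}u(v)<u_0(v)$. Since $S_e$ is the base of $K=C^*$, for $v$ modulo $\R e$ the largest root satisfies $\lambda_1^e(v)=\max_{u\in S_e}u(v)$; this is the standard support-function description, since $\lambda_1^e(v)\le t$ exactly when $te-v\in\overline C$. On the other hand, the Dirichlet law has full support on the simplex, so $\operatorname{ess\,sup}\sum_jW_j\lambda_j^e(v)=\lambda_1^e(v)$. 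Hence
\[
\max_{S'}u(v)=\operatorname{ess\,sup}U(v)=\lambda_1^e(v)\ge u_0(v),
\]
a contradiction. Therefore $S'=S_e$.
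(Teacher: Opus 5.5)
Your forward direction, uniqueness argument and convex-hull argument follow the paper's proof. You cancel the radial Gamma factor by comparing moments; your $\mathbb{E}R^k=(m\alpha)_k$ is exactly the nonzero binomial coefficient the paper uses after integrating $R$ out. The one-dimensional projections determine the law. Finally, the support function $\lambda_1^e(v)=\max_{u\in S_e}u(v)$, together with the full support of the Dirichlet law, gives \eqref{eq:n-3-8}.

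The converse contains a false step, though it is easy to repair. You establish the Laplace identity only for $x$ near $e$. You then assert that a $K$-supported Laplace transform finite at one interior point is finite on all of $C$. That is wrong in general. For $n=1$ and $K=[0,\infty)$, the measure $e^{y}\,dy$ has transform $1/(x-1)$, which is finite at $x=2$ but infinite at $x=1/2$.

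Finiteness only propagates from $x_0$ to $x_0+C$. Monotone convergence along a segment reaches at most the abscissa of convergence, not beyond it; going further would need a Landau--Pringsheim argument using analyticity of $p^{-\alpha}$ on the tube.

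No continuation is actually needed. Your identity $R\,U(v)\stackrel{d}{=}\sum_jG_j\lambda_j^e(v)$ holds for every $v$, not only small ones. For $v=e-x$ with $x\in C$, the numbers $1-\lambda_j^e(e-x)$ are, as a multiset, the characteristic roots of $x$, all positive. Hence, for every $x\in C$ at once,
\[
 \int e^{-\langle x,y\rangle}\,d\mu(y)
 =p(e)^{-\alpha}\,\mathbb{E}\,e^{R\,U(e-x)}
 =p(e)^{-\alpha}\prod_{j=1}^m\lambda_j^e(x)^{-\alpha}
 =p(x)^{-\alpha}.
\]

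This is the paper's converse. It applies \eqref{eq:n-3-7} with $v=x-e$ and the Dirichlet identity with $a_j=\lambda_j^e(x)>0$ to get $\mathbb{E}\,U(x)^{-m\alpha}=(p(x)/p(e))^{-\alpha}$ on all of $C$. It then reads off complete monotonicity from the positive mixture of the Gamma--Laplace transforms $u(x)^{-m\alpha}$.
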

\begin{proof}
Integration over the radial Gamma variable in \eqref{eq:n-3-6} yields
\begin{equation}\label{eq:n-3-9}
\left(\frac{p(x)}{p(e)}\right)^{-\alpha}
=
\mathbb E\,U(x)^{-m\alpha}.
\end{equation}

The Dirichlet integral identity is
\begin{equation}\label{eq:n-3-10}
\mathbb E\left(\sum_jW_ja_j\right)^{-m\alpha}
=
\prod_ja_j^{-\alpha},
\qquad a_j>0.
\end{equation}
Indeed, evaluate $\prod_j\int_0^\infty e^{-a_jt_j}t_j^{\alpha-1}\,dt_j$ first as a product of Gamma integrals, and then with \(t_j=rw_j\). The Jacobian is \(r^{m-1}\), and the radial integral has shape \(m\alpha\).

Set $a_j=1+t\lambda_j^e(v)$ for sufficiently small $t$, and compare \eqref{eq:n-3-9} at $x=e+tv$ with \eqref{eq:n-3-10}. Both random variables have compact support. Differentiating the uniformly convergent binomial series gives equality of every moment, since the coefficient $(m\alpha)_k/k!$ is nonzero. Polynomials are dense in the continuous functions on a compact interval containing both supports, so the two laws agree.

Conversely, apply \eqref{eq:n-3-7} with \(v=x-e\), and then \eqref{eq:n-3-10}, to obtain \eqref{eq:n-3-9}. Every \(u(x)^{-m\alpha}\) is a positive Gamma--Laplace transform, so its positive mixture is completely monotone.

The projections determine the characteristic function $\E e^{iU(v)}$ for every $v\in V$, and hence determine the probability measure uniquely.

Finally,
\begin{equation}\label{eq:n-3-11}
\max_{u\in S_e}u(v)
=
\inf\{s:se-v\in \overline C\}
=
\lambda_1^e(v).
\end{equation}
The Dirichlet distribution has the whole closed simplex as support, so its weighted average in \eqref{eq:n-3-7} has the same upper support endpoint. Equality of support functions proves \eqref{eq:n-3-8}. \end{proof}

The content of Theorem~\ref{thm:n-3-1} is that the directional laws, each given by Dirichlet weights on its own characteristic roots, are the projections of a single random linear functional.

For later use, if \(v\perp_{H_e}e\),
\begin{equation}\label{eq:n-3-12}
\mathbb EU(v)=0,
\qquad
\mathbb EU(v)^2
=
\frac{\|v\|_e^2}{m(m\alpha+1)}.
\end{equation}
This follows from the Dirichlet second moments and \(\sum_j\lambda_j^e(v)=0\).
\subsection{Dimension, degree, and exponent}

\begin{theorem}\label{thm:n-5-1}
If \(\alpha>0\) and \(p^{-\alpha}\in\mathrm{CM}(C)\), then
\begin{equation}\label{eq:n-5-7}
n\le m+\alpha m(m-1).
\end{equation}

Equality holds exactly when, up to a positive scalar and an invertible real linear change of variables, $p$ is a simple Euclidean Jordan determinant of rank $m$, and
\begin{equation}\label{eq:n-5-8}
\alpha=\frac d2,
\end{equation}
where \(d\) is its Peirce parameter.
\end{theorem}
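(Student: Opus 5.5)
The plan is to read the inequality \eqref{eq:n-5-7} off a second-moment computation for the random functional $U$ of Theorem~\ref{thm:n-3-1}, and then analyse equality through the support of $U$. Fix $e\in C$ and let $v_1,\dots,v_{n-1}$ be an $H_e$-orthonormal basis of the $H_e$-orthogonal complement of $e$. Recall that $\|e\|_e^2=m$ by \eqref{eq:n-2-5}. By \eqref{eq:n-3-12}, summing over this basis gives
\[
 \sum_{i=1}^{n-1}\mathbb E\,U(v_i)^2=\frac{n-1}{m(m\alpha+1)}.
\]
I would then bound the same sum pointwise on $S_e$. For $u\in K$, write $\|u\|_*$ for the dual norm of $\|\cdot\|_e$, so that $\|u\|_*^2=u(H_e^{-1}u)$. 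Expanding in the $H_e$-orthonormal basis $\{e/\sqrt m,v_1,\dots,v_{n-1}\}$ gives $\sum_iu(v_i)^2=\|u\|_*^2-u(e)^2/m$.

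The key pointwise estimate is $\|u\|_*\le u(e)$ for every $u\in K$. To see it, take any $v$ with $\|v\|_e=1$. Then $\lambda_1^e(v)e-v\in\overline C$, so $u(v)\le\lambda_1^e(v)\,u(e)$. Moreover $\lambda_1^e(v)\le\|\lambda^e(v)\|_2=\|v\|_e=1$. Hence every $u\in S_e$ satisfies $\sum_iu(v_i)^2\le1-1/m$. Taking expectations yields $(n-1)/(m(m\alpha+1))\le(m-1)/m$, that is, $n-1\le(m-1)(m\alpha+1)$. This is exactly \eqref{eq:n-5-7}.

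For the equality case, suppose equality holds. Then $\|u\|_*=u(e)$ almost surely, hence on all of $\operatorname{supp}U$ by continuity. For such $u$, take $v=H_e^{-1}u/\|u\|_*$. Both inequalities above must then be equalities: $\lambda_1^e(v)=\|\lambda^e(v)\|_2$ forces the roots of $v$ to be $(\lambda_1,0,\dots,0)$ with $\lambda_1=1>0$. It also forces $u(\lambda_1 e-v)=0$, which I will use only as consistency. A vector whose roots are all nonnegative lies in $\overline C$, so $H_e^{-1}u\in\overline C$, i.e.\ $\operatorname{supp}U\subset H_e\overline C$. By \eqref{eq:n-3-8}, $S_e$ is the convex hull of this support, and $H_e\overline C$ is a convex cone, so $K\subseteq H_e\overline C$. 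Together with \eqref{eq:n-2-14} this gives $H_e\overline C=K$. Theorem~\ref{thm:intro-rigidity} then puts $p$ in the Jordan product form \eqref{eq:intro-jordan}.

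It remains to force a single simple factor and the value of $\alpha$; I expect this bookkeeping to be the main obstacle. For a product, $n=\sum_a\dim J_a$ and $m=\sum_ak_a r_a$, where $r_a=\operatorname{rank}J_a$ and $\dim J_a=r_a+d_ar_a(r_a-1)/2$. First I would show that equality forces $k_a=1$ and $s=1$. The term $\alpha m(m-1)$ is superadditive, so splitting into factors, or raising a factor to a power $k_a>1$, loses dimension relative to the bound. In particular $\alpha\ge1/(2(m-1))$ together with strictness of cross terms in $m^2$ should suffice, though I would check the mixed terms carefully. With $p=\Delta_J$ simple of rank $m$ and $n=m+dm(m-1)/2$, equality reads $\alpha=d/2$. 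Conversely, $d/2$ lies in the Wallach set of $J$ (positivity of Riesz distributions on symmetric cones, as cited after \eqref{eq:intro-wallach}). At $\alpha=d/2$ the identity $n=m+\alpha m(m-1)$ is then just the dimension formula, which proves the ``exactly when'' direction.
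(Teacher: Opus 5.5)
Your proof of the inequality and your reduction of the equality case to $H_e\overline C=K$, and from there to the Jordan product form, are correct and follow the paper. Your pointwise bound $\|u\|_*\le u(e)$, obtained from $\lambda_1^e(v)\le\|\lambda^e(v)\|_2$, is an equivalent repackaging of the paper's centered Cauchy--Schwarz bound $\|H_e^{-1}u-e/m\|_e^2\le(m-1)/m$.

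The gap is in the final bookkeeping. Pure arithmetic with $n=\sum_a(r_a+d_ar_a(r_a-1)/2)$, $m=\sum_ak_ar_a$ and the gap bound $\alpha\ge1/(2(m-1))$ cannot force $s=1$ and $k_a=1$, because the Peirce parameters $d_a$ are not tied to $\alpha$. Two examples satisfy all of your arithmetic constraints:
\begin{itemize}
\item For the spin factor $J$ of dimension $10$ (rank $2$, $d=8$) and $p=\Delta_J^2$, one has $m=4$. Then $\alpha=1/2$ satisfies both $n=m+\alpha m(m-1)$ and $\alpha\ge1/6$, and even the refined gap $(m-a)\alpha\ge1/2$.
\item For $p=\Delta_J\cdot x_0$ on $J\oplus\R$, with $J$ of rank $2$ and $d=6$, equality holds at $\alpha=1$.
\end{itemize}
Superadditivity of $m(m-1)$ only helps once you know that each factor separately obeys its own bound, and that is the missing input.

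To supply it, restrict $p^{-\alpha}$ to one block with the other blocks frozen at their units. Directions supported in one block lie in $\overline C$ and are limits of directions in $C$, so $\Delta_a^{-k_a\alpha}$ is completely monotone on its own cone. Apply the inequality you already proved to $\Delta_a$, with degree $r_a$ and exponent $k_a\alpha$; rank-one factors satisfy it trivially. This gives $n_a\le r_a+k_a\alpha r_a(r_a-1)$, and summing over $a$,
\[
 m+\alpha m(m-1)-n\ \ge\ \sum_a(k_a-1)r_a+\alpha\Bigl(m^2-\sum_ak_ar_a^2\Bigr).
\]
The right-hand side is positive unless $s=1$ and $k_1=1$.

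The paper obtains $k_a=1$ differently, from the support information you set aside. Every extreme point of the compact base lies in $\operatorname{supp}U$, so every extreme ray of $\overline C$ has a single nonzero root. A primitive idempotent of $J_a$, however, has the root $1$ with multiplicity $k_a$. The paper then uses the same per-factor restriction to exclude $s\ge2$.
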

\begin{proof}
Use the angular law of Theorem~\ref{thm:n-3-1}, and identify \(V^*\) with \(V\) through \(H_e\). Put $X=H_e^{-1}U$ and $Z=X-e/m$. Then \(Z\perp e\), and \eqref{eq:n-3-12} gives
\begin{equation}\label{eq:n-5-9}
\mathbb E\|Z\|_e^2
=
\frac{n-1}{m(m\alpha+1)}.
\end{equation}

For any \(u\in S_e\), set \(z=H_e^{-1}u-e/m\). Then $\|z\|_e^2=u(z)\le\lambda_1^e(z)$. Since \(\sum_j\lambda_j^e(z)=0\), Cauchy--Schwarz applied to the remaining roots gives $\lambda_1^e(z)\le\sqrt{(m-1)/m}\,\|z\|_e$. Hence
\begin{equation}\label{eq:n-5-10}
\|z\|_e^2\le\frac{m-1}{m}.
\end{equation}
Taking expectations proves \eqref{eq:n-5-7}.

The equality case is proved in Section~\ref{sec:equality}. \end{proof}
\subsection{A sharp degree-dependent gap at zero}

\begin{lemma}[A curved-graph incidence estimate]\label{thm:n-5-2}
Let \(I\) be a compact interval and \(\lambda\in C^2(I)\) satisfy $|\lambda''|\ge\kappa>0$. Then, for all real $a,b$ and every $\varepsilon>0$,
\begin{equation}\label{eq:n-5-12}
\left|
\{t\in I:|a+bt-\lambda(t)|<\varepsilon\}
\right|
\le
8\sqrt{\varepsilon/\kappa}.
\end{equation}
\end{lemma}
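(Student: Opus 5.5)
Put $f(t)=\lambda(t)-a-bt$. Then $f''=\lambda''$, so $|f''|\ge\kappa$ on $I$. Since $f''$ is continuous and never zero, it has one sign on $I$; after replacing $f$ by $-f$ if needed, $f''\ge\kappa$ and $f$ is strictly convex. The goal is to bound the measure of the sublevel-type set $E=\{t\in I:|f(t)|<\varepsilon\}$. This is a van der Corput-type estimate, and the plan is elementary.

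\emph{Step 1: structure of $E$.} Convexity of $f$ makes $\{f<\varepsilon\}\cap I$ an interval $[\alpha_0,\beta_0]$, possibly relatively open or empty. Inside it, the set $\{f\le-\varepsilon\}$ is a (possibly empty) subinterval. Hence $E$ is a union of at most two intervals $E_1,E_2$, one on each side of the minimum point $t_0$ of $f$ on the closure; if $t_0$ is at an endpoint of $I$, only one side occurs. On each $E_i$ the function $f$ is monotone.

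\emph{Step 2: bounding one interval.} Take $E_1=(s_1,s_2)$ on the right of $t_0$, so $f'\ge0$ there. Convexity and $f''\ge\kappa$ give $f(s_2)-f(s_1)\ge\kappa(s_2-s_1)^2/2$. Indeed, $f'(s)\ge f'(s_1)+\kappa(s-s_1)\ge\kappa(s-s_1)$ for $s\ge s_1\ge t_0$, because $f'(s_1)\ge0$. Integrating from $s_1$ to $s_2$ gives the inequality. Since $f$ takes values in $(-\varepsilon,\varepsilon)$ on $E_1$, the increase is at most $2\varepsilon$. Therefore
\[
 s_2-s_1\le 2\sqrt{\varepsilon/\kappa}.
\]
The left interval is handled symmetrically, using $f'\le -\kappa(s_2-s)$.

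\emph{Step 3: the case where the minimum lies inside $E$.} If $E$ is a single interval containing $t_0$, split it at $t_0$ and apply Step 2 to each half, where $f'(t_0)=0$, or the one-sided derivative has the right sign at an endpoint. Each half has length at most $2\sqrt{\varepsilon/\kappa}$. In every case the total is at most $4\sqrt{\varepsilon/\kappa}$, which is well within the stated $8\sqrt{\varepsilon/\kappa}$.

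The only point needing care is the endpoint case, when the minimizer $t_0$ is at the boundary of $I$ and $f'(t_0)\neq0$. There $f$ is monotone on all of $I$, and the argument of Step 2 still applies with the inequality $f'(s_1)\ge0$ or $f'(s_1)\le0$ as appropriate. No other obstacle is expected; the generous constant $8$ absorbs any loose bookkeeping.
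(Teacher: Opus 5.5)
Your proof is correct and yields the better constant $4$. It follows the same outline as the paper's proof: fix the sign of $\lambda''$, observe that the set has at most two interval components, and bound each one by strong convexity. The per-interval estimate is different, though.

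The paper applies the strong-concavity midpoint inequality $g\bigl(\tfrac{s_1+s_2}{2}\bigr)\ge\tfrac12\bigl(g(s_1)+g(s_2)\bigr)+\tfrac{\kappa}{8}L^2$ to each component of $\{|g|<\varepsilon\}$ as it stands. This needs no knowledge of where the extremum lies and no endpoint case analysis. However, it only gives $L\le4\sqrt{\varepsilon/\kappa}$ per component, hence the constant $8$.

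You instead cut at the minimizer $t_0$, so that each piece is monotone with $f'$ of the right sign at its inner end, and you integrate $f'(s)\ge\kappa(s-s_1)$. This costs the small bookkeeping of your Step~3 and the boundary-minimizer case. In return it gives $2\sqrt{\varepsilon/\kappa}$ per piece and a total of $4\sqrt{\varepsilon/\kappa}$.

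That total is sharp. For $\lambda(t)=\kappa t^2/2$, $a=\varepsilon$, $b=0$ on a long interval, the set is $\{0<|t|<2\sqrt{\varepsilon/\kappa}\}$. In the application to the exponent gap (Theorem~\ref{thm:n-5-3}) only the exponent $1/2$ matters, so the improved constant changes nothing there.
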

\begin{proof}
After changing signs, assume \(\lambda''\ge\kappa\). The function $g(t)=a+bt-\lambda(t)$ is strongly concave. The set \(-\varepsilon<g<\varepsilon\) has at most two interval components. If one component has length \(L\), the strong-concavity midpoint inequality gives $-\varepsilon+\kappa L^2/8<\varepsilon$, so \(L\le4\sqrt{\varepsilon/\kappa}\). Sum over the two components. \end{proof}

\begin{theorem}\label{thm:n-5-3}
If a nonlinear real irreducible factor occurs in \(p\) with multiplicity \(a\), then
\begin{equation}\label{eq:n-5-13}
\alpha\in G(p),\quad\alpha>0
\quad\Longrightarrow\quad
(m-a)\alpha\ge\frac12.
\end{equation}

Consequently, unless \(p\) is a product of real linear forms,
\begin{equation}\label{eq:n-5-14}
G(p)\cap
\left(0,\frac1{2(m-1)}\right)
=\varnothing.
\end{equation}
This bound is sharp for every degree \(m\ge2\).
\end{theorem}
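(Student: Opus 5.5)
The plan is to turn the common spectral Dirichlet law of Theorem~\ref{thm:n-3-1} into a small-ball probability estimate along a line segment. I will compare it with the geometric incidence bound of Lemma~\ref{thm:n-5-2}. The key observation is that for a fixed linear functional $u$, the map $t\mapsto u(v_0+tv_1)$ is affine in $t$, while a root of a nonlinear factor is not.

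\emph{Step 1 (a curved root branch).} Let $h$ be the nonlinear irreducible factor, with multiplicity $a$. By Section~\ref{sec:root-branches}, on a dense open set the roots of $h(te-v)$ are simple real-analytic branches $\lambda(v)$, homogeneous of degree one, and disjoint from the roots of the other factors. Suppose that on some open set every such branch were affine along every line. Then the branch would be a linear functional $\ell$. The polynomial $h$ would then vanish on the hyperplane $\{\ell(v)=0\}$ (taking $t=0$ after translating), so $\ell$ would divide $h$, contradicting irreducibility and nonlinearity. Hence there are $v_0,v_1$ and a compact interval $I$ with the following properties: the branch $\lambda(t):=\lambda(v_0+tv_1)$ is analytic on $I$ and satisfies $|\lambda''|\ge\kappa>0$ there. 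All root values along the segment are bounded by some $M$. And $\lambda(t)$ occurs among the characteristic roots $\lambda^e_j(v_0+tv_1)$ exactly $a$ times, forming a block $B$ of indices.

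\emph{Step 2 (lower bound from the Dirichlet law).} Fix $t\in I$ and write $S=\sum_{j\in B}W_j\sim\mathrm{Beta}(a\alpha,(m-a)\alpha)$. Then
\[
\Bigl|\sum_jW_j\lambda^e_j-\lambda(t)\Bigr|=\Bigl|\sum_{j\notin B}W_j(\lambda_j^e-\lambda(t))\Bigr|\le2M(1-S).
\]
Consequently $\mathbb P(|U(v_0+tv_1)-\lambda(t)|<\varepsilon)\ge\mathbb P(1-S<\varepsilon/2M)\ge c\,\varepsilon^{(m-a)\alpha}$ for small $\varepsilon$, with $c>0$ independent of $t$. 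This uses the Beta density near $S=1$.

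\emph{Step 3 (upper bound from the incidence lemma).} Integrate over $t\in I$ and apply Fubini. Writing $U(v_0+tv_1)=A+Bt$ with random $A=U(v_0)$ and $B=U(v_1)$, we get
\[
c|I|\varepsilon^{(m-a)\alpha}\le\mathbb E\,\bigl|\{t\in I:|A+Bt-\lambda(t)|<\varepsilon\}\bigr|\le8\sqrt{\varepsilon/\kappa}
\]
by Lemma~\ref{thm:n-5-2}. Letting $\varepsilon\to0$ forces $(m-a)\alpha\ge1/2$. For the consequence \eqref{eq:n-5-14}: since $\deg h\ge2$, we have $a\le m/2$, hence $m-a\le m-1$ and $\alpha\ge1/(2(m-1))$.

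\emph{Step 4 (sharpness), the expected main obstacle.} For each $m$ I need a complete hyperbolic $p$ of degree $m$ that has a nonlinear factor, with $1/(2(m-1))\in G(p)$. Naive products such as a Lorentz form times linear factors fail, because each factor separately needs its own threshold. I would instead look for a degree-$m$ polynomial whose Riesz measure at $\alpha=1/(2(m-1))$ is an explicit positive measure. The first attempt would be to construct the law of $U$ directly as a mixture supported on a curve of extreme functionals. Such a measure would be singular, of "dimension $1/2$" per unit of $(m-1)\alpha$, matching the equality $(m-a)\alpha=1/2$ with $a=1$. I would then verify \eqref{eq:n-3-7} via the Dirichlet identity \eqref{eq:n-3-10}. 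A natural candidate is a pencil $x_0^{m-2}(x_0^2-x_1^2-x_2^2)$-type polynomial after a suitable linear substitution that couples the linear and quadratic parts. Checking positivity of its Riesz measure at exactly this exponent is the step I expect to be hardest.
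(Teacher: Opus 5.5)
Your Steps 1--3 follow the paper's proof of \eqref{eq:n-5-13} and \eqref{eq:n-5-14}: a root branch of the nonlinear factor with multiplicity exactly $a$ and $|\lambda''|\ge\kappa$ on a segment, aggregation of the Dirichlet weights into a Beta variable giving the uniform lower bound $c\,\varepsilon^{(m-a)\alpha}$, and then Fubini together with Lemma~\ref{thm:n-5-2}. Two small points: in Step 1 you should say explicitly that translation equivariance gives $\ell(e)=1$, so that $v\mapsto v-\ell(v)e$ maps an open set onto a relatively open subset of $\ker\ell$. For \eqref{eq:n-5-14} you only need $a\ge1$, not $a\le m/2$.

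The genuine gap is Step~4. The statement asserts that the bound is sharp for every $m\ge2$, but you never prove $1/(2(m-1))\in G(p)$ for any $p$. You name a candidate and defer the positivity check, so sharpness remains unproved.

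Your reason for discarding plain products is also mistaken. A positive Laplace representation of $p^{-\alpha}$ does not require each factor's power to be completely monotone; only the factor-by-factor argument fails. The paper's witness is in fact a three-variable Lorentz form times a power of a linear form:
$p_m(x,t,z)=x^{m-2}(xt-z^2)$ on $C=\{x>0,\ xt>z^2\}$.

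The missing ingredient is an explicit positive measure. Put $\eta=(m-1)\alpha-\tfrac12\ge0$ and take
$\frac{u^{\alpha-3/2}}{\Gamma(\alpha)\sqrt{4\pi}}\,du\,dw\,\frac{r^{\eta-1}}{\Gamma(\eta)}\,dr$ on $u>0$, $w\in\R$, $r\ge0$, reading the last factor as $\delta_0(dr)$ when $\eta=0$. Push it forward under $(u,w,r)\mapsto(r+w^2/(4u),u,w)$, paired with $(x,t,z)$ via $xs+tu+zw$. Integrate successively:
\begin{enumerate}
\item in $r$, a gamma integral giving $x^{-\eta}$;
\item in $w$, a Gaussian integral giving $\sqrt{4\pi u/x}\,e^{uz^2/x}$;
\item in $u$, a gamma integral with rate $t-z^2/x$.
\end{enumerate}
The result is $x^{-\eta-1/2}(t-z^2/x)^{-\alpha}=p_m^{-\alpha}$. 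The support lies in the dual cone because $su\ge w^2/4$, and Tonelli justifies the order of integration.

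The gamma factor in $r$ coming from the linear factor is what compensates for the subcritical exponent of the quadratic. At $\eta=0$ the measure is singular, concentrated on the boundary surface $4su=w^2$, which matches your heuristic. Combined with the inequality you proved, this gives $G(p_m)=\{0\}\cup[1/(2(m-1)),\infty)$; it is also a case of Gindikin's theorem for generalized powers.

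Without such a construction, your proposal establishes only the inequality and its consequence \eqref{eq:n-5-14}, not the sharpness claim.
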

\begin{proof}
By Section~\ref{sec:root-branches}, on a dense open set a root of the chosen irreducible factor is analytic and has multiplicity exactly $a$ in $p$.

Such a branch cannot be affine on an open set. Homogeneity would make it linear, say \(\lambda(v)=\ell(v)\), and translation by \(e\) would give \(\ell(e)=1\). The identity $h(v-\ell(v)e)=0$ would make $h$ vanish on an open subset of $\ker\ell$: the linear map $v\mapsto v-\ell(v)e$ is onto that hyperplane and maps open sets to relatively open sets. The restriction of $h$ to $\ker\ell$ would be zero as a polynomial, so $\ell$ would divide $h$, contrary to nonlinear irreducibility. Thus the Hessian of a root branch is nonzero somewhere. Choosing a direction in which its quadratic form is nonzero gives the asserted line segment after shrinking it.

Therefore some affine line \(v(t)=v_0+tw\) and compact interval \(I\) satisfy $|\lambda''(t)|\ge\kappa>0$. Shrink \(I\) so the other distinct roots remain separated and all root differences are uniformly bounded.

In the Dirichlet projection \eqref{eq:n-3-7}, combine the \(a\) weights at \(\lambda(t)\). The total remaining weight \(T\) has a Beta distribution with first parameter $(m-a)\alpha$. Thus $\Pr(T<\varepsilon/M) \ge c\varepsilon^{(m-a)\alpha}$ for sufficiently small $\varepsilon$, with constants uniform on $I$. In detail, if $b=(m-a)\alpha$ and $c_1=a\alpha$, the density of $T$ is $t^{b-1}(1-t)^{c_1-1}/\mathrm B(b,c_1)$. On $0<t<1/2$ the second factor has a positive lower bound, so integration from $0$ to $\varepsilon/M$ gives the stated estimate with $c>0$. Choose $M$ larger than every root difference on the compact interval. For each fixed $t$, the event in this Dirichlet realization implies that the weighted spectral average is within $\varepsilon$ of $\lambda(t)$. Equality of its law with the projection of $U$ therefore gives
\begin{equation}\label{eq:gap-probability}
 \Pr\bigl(|U(v_0)+tU(w)-\lambda(t)|<\varepsilon\bigr)
 \ge \Pr(T<\varepsilon/M)
 \ge c\varepsilon^{(m-a)\alpha}.
\end{equation}

Integrating over \(t\), then applying Fubini and Lemma~\ref{thm:n-5-2}, gives $c|I|\varepsilon^{(m-a)\alpha} \le 8\sqrt{\varepsilon/\kappa}$. Let \(\varepsilon\downarrow0\). This proves \eqref{eq:n-5-13}, and then \eqref{eq:n-5-14}.

For sharpness, take
\[
p_m(x,t,z)=x^{m-2}(xt-z^2),
\qquad
C=\{x>0,\ xt>z^2\}.
\]
Let $\eta=(m-1)\alpha-\frac12\ge0$. On \(u>0,w\in\mathbb R,r\ge0\), use the positive measure
\begin{equation}\label{eq:n-5-15}
\frac{u^{\alpha-3/2}}{\Gamma(\alpha)\sqrt{4\pi}}\,
du\,dw\,
\frac{r^{\eta-1}}{\Gamma(\eta)}\,dr,
\end{equation}
with the last factor interpreted as \(\delta_0(dr)\) when \(\eta=0\). Push it forward under $(u,w,r)\longmapsto(r+w^2/(4u),u,w)$. Its support lies in the closed dual cone.

In the pairing \(xs+tu+zw\), integration first in \(r\), then in \(w\), then in \(u\), gives
\[
x^{-\eta-1/2}(t-z^2/x)^{-\alpha}
=
x^{-(m-2)\alpha}(xt-z^2)^{-\alpha}.
\]
For $(x,t,z)\in C$, completion of the square gives
\[
 \int_{\R}e^{-xw^2/(4u)-zw}\,dw
 =\sqrt{\frac{4\pi u}{x}}\,e^{uz^2/x}.
\]
The remaining gamma integral has rate $t-z^2/x>0$, and the preceding expression follows, including $\eta=0$. The resulting Laplace integral is finite at each point of $C$, so the pushforward is locally finite: on a compact set the exponential weight is bounded below. Its support is dual since $s=r+w^2/(4u)$ gives $su\ge w^2/4$. The Laplace integrands are nonnegative, so Tonelli's theorem justifies each change in the order of integration.
Thus
\begin{equation}\label{eq:n-5-16}
G(p_m)=
\{0\}\cup
\left[\frac1{2(m-1)},\infty\right).
\end{equation}
This proves sharpness. \end{proof}

Writing $\Delta_1(x,t,z)=x$ and $\Delta_2(x,t,z)=xt-z^2$, the polynomial $p_m=\Delta_1^{m-2}\Delta_2$ is a generalized power on $\operatorname{Sym}_2$, and \eqref{eq:n-5-16} is also a special case of Gindikin's positivity theorem for generalized-power Riesz distributions \cite[Theorem~1]{Gindikin}.

In particular, zero is an accumulation point of positive admissible exponents exactly when \(p\) is a product of real linear forms. Such products have positive Gamma representations at every positive exponent.
\subsection{Accumulation at zero}\label{sec:gap}
Theorem~\ref{thm:n-5-3}, together with the lineality reduction of Section~\ref{subsec:lineality}, gives the following statement for arbitrary hyperbolic polynomials. The notation $G(p)$ remains that of \eqref{eq:n-3-1}.

\begin{corollary}[Accumulation at zero]\label{thm:gap}
Let $p$ be a homogeneous hyperbolic polynomial of positive degree $m$, positive on $C$. If $G(p)$ contains positive parameters tending to zero, then
\begin{equation}\label{eq:a-linear-factorization}
 p(x)=c\prod_{\ell=1}^r L_\ell(x)^{m_\ell},
\end{equation}
where $c>0$, the $L_\ell$ are real linear forms positive on $C$, and the $m_\ell$ are positive integers. Consequently, if $p$ has a nonlinear real irreducible factor, there is an $\epsilon(p)>0$ such that $G(p)\cap(0,\epsilon(p))=\varnothing$. Conversely, a polynomial of the form \eqref{eq:a-linear-factorization} has every nonnegative parameter in $G(p)$.
\end{corollary}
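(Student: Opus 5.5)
The plan is to reduce to the complete case and then invoke Theorem~\ref{thm:n-5-3}; the converse follows from the one-dimensional gamma integral.

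\emph{Reduction.} Let $L=\overline C\cap(-\overline C)$ be the lineality space. By Section~\ref{subsec:lineality}, $D_vp\equiv0$ for $v\in L$, so $p=\bar p\circ\pi$, where $\pi:V\to V/L$ and $\bar p$ is hyperbolic with complete cone $\bar C=\pi(C)$. I will check that $G(p)=G(\bar p)$, using Section~\ref{subsec:lineality} in both directions:
\begin{itemize}
\item pullback of a completely monotone function along $\pi$ stays completely monotone, since the directional derivatives along $u\in C$ become derivatives along $\pi(u)\in\bar C$;
\item conversely, $p^{-\alpha}$ is constant along $L$-cosets, so complete monotonicity on $C$ restricts to complete monotonicity on $\bar C$.
\end{itemize}
Irreducible factorizations also correspond. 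Every real irreducible factor $h$ of $p$ is $L$-invariant: restricted to lines $x+\mathbb R v$ with $x\in C$ and $v\in L$, each factor of the constant polynomial $t\mapsto p(x+tv)$ has no real zeros and divides a constant, hence is itself constant. So $h=\bar h\circ\pi$, and $h$ is linear exactly when $\bar h$ is.

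\emph{Main step.} If the effective dimension is $\ge2$, then $\bar p$ is complete with $\dim\ge2$. If its degree is $1$, it is a linear form and we are done. Otherwise $m\ge2$, and if some real irreducible factor of $\bar p$ were nonlinear, Theorem~\ref{thm:n-5-3} would give $G(\bar p)\cap(0,1/(2(m-1)))=\varnothing$, contradicting the accumulation of positive parameters at zero. Hence every irreducible factor is linear, and $p=c\prod L_\ell^{m_\ell}$. Each $L_\ell$ is nonzero on the connected set $C$, so it has constant sign there. We may choose signs so that every $L_\ell>0$ on $C$, absorbing the resulting sign into $c$; then $c>0$ because $p>0$ on $C$. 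In effective dimension one, $\bar p=cx^m$ is already of this form, and $\dim 0$ means $p$ is constant, which is excluded by $m>0$.

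The statement with $\epsilon(p)$ is the contrapositive; in the complete case one can take explicitly $\epsilon(p)=1/(2(m-1))$. For the converse, each $L_\ell(x)^{-m_\ell\alpha}$ is the Laplace transform of a positive measure supported on the ray $\mathbb R_{\ge0}\ell$ (the gamma integral, as in \eqref{eq:a-one-dimensional}). The convolution of these measures represents $p^{-\alpha}$ and is supported in $K$, since each $\ell\in K$ when $L_\ell>0$ on $C$. The parameter $\alpha=0$ is trivial.

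The main obstacle is only bookkeeping: verifying that the hypotheses of Theorem~\ref{thm:n-5-3} (completeness, $n,m\ge2$) hold after the reduction, and that linearity of factors is preserved under the quotient. The argument above handles both points.
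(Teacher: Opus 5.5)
Your proof is correct and follows the paper's route. You pass to the lineality quotient, apply Theorem~\ref{thm:n-5-3} in effective dimension at least two (treating effective dimension one and degree one directly), pull back the linear factors, and get the converse from the gamma-integral representations, while spelling out details the paper leaves terse, namely $G(p)=G(\bar p)$ and the correspondence of real irreducible factors under the quotient.
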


\begin{proof}
Pass to the lineality quotient as in Section~\ref{subsec:lineality}. The resulting complete polynomial has the same admissible exponents. If its quotient has dimension one, it is a power of a real linear form; otherwise Theorem~\ref{thm:n-5-3} excludes a nonlinear irreducible factor when positive admissible exponents tend to zero. Pulling back the linear factors gives \eqref{eq:a-linear-factorization}. The converse follows from their gamma-integral representations.
\end{proof}

\begin{remark}[Homogeneity is essential]\label{rem:a-homogeneity}
In the paragraph following Lemma~2.4, Scott--Sokal \cite[Lemma~2.4]{SS} give the nonhomogeneous quartic example $P(x)=((x+a)^2+b^2)(x+c)^2$, where $0<c\leq a$ and $b\ne0$. The polynomial has nonreal zeros, yet all its negative powers with positive exponent are completely monotone on the positive half-line. Indeed the inverse Laplace density of $(\log P)'$ is
\begin{equation}\label{ineq:a-quartic-density}
 2e^{-at}\cos(bt)+2e^{-ct}
 \geq2(e^{-ct}-e^{-at})\geq0\qquad(t\geq0).
\end{equation}
Lemma~2.4 of that source gives the stated complete monotonicity. The homogeneity hypothesis in Corollary~\ref{thm:gap} is therefore essential. 
\end{remark}

\part{Jordan rigidity and Legendre duality}
The logarithmic gradient supplies the dual coordinates used in this part. We first show that vanishing of the fourth-order defect at a single point recovers a Euclidean Jordan algebra and the polynomial itself. The resulting identity for the Hessian determinant then gives the Monge--Amp\`ere and polynomial-duality classifications. Finally, the saddle-point formula turns these classifications into rigidity statements for the Riesz kernels.

\section{The logarithmic gradient and multiplicative duality}\label{sec:legendre}

\begin{proposition}\label{thm:n-2-3}
The logarithmic gradient $\tau_p(x)=D\log p(x)$ is a real-analytic diffeomorphism from $C$ onto $C^\vee$.

Its multiplicative Legendre transform is positive, real-analytic, homogeneous of degree \(m\), and satisfies
\begin{align}
 \langle x,\tau_p(x)\rangle&=m, \qquad H_xx=\tau_p(x),\label{eq:n-2-16}\\
 D\log p_*(\tau_p(x))&=x,\label{eq:n-2-17}\\
 -D^2\log p_*(\tau_p(x))&=H_x^{-1}.\label{eq:n-2-18}
\end{align}
In particular, $(p_*)_*=p$. \end{proposition}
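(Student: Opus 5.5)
The plan has three parts: first show that $\tau_p$ is a diffeomorphism onto $C^\vee$, then derive the identities by homogeneity and the chain rule, and finally deduce $(p_*)_*=p$.

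\textbf{The map $\tau_p$.} For each fixed $y\in C^\vee=\operatorname{int}K$, consider the strictly convex function
\[
\Psi_y(x)=\langle x,y\rangle-\log p(x)
\]
from \eqref{eq:a-objective}, taken with $\alpha=1$. The coercivity argument in Section~\ref{sec:proof} used only $y\in\operatorname{int}K$ and the boundary behaviour of $p$, so it applies unchanged. It shows that $\Psi_y$ has a unique critical point in $C$, namely the unique solution of $\tau_p(x)=y$. Hence every $y\in C^\vee$ has exactly one preimage.

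Conversely, $\tau_p(C)\subseteq C^\vee$. By Euler's identity, $\tau_p(x)=H_xx$, since $D\log p$ is homogeneous of degree $-1$. By \eqref{eq:n-2-14} at the point $x$, $H_x\overline C\subseteq K$. To get the interior, suppose $\langle H_xx,w\rangle=0$ for some $w\in\overline C\setminus\{0\}$. Then $H_x[x,w]=\operatorname{tr}(AB)$ with $A\succ0$ and $B\succeq0$ in a slice at $x$ (Lemma~\ref{thm:n-2-1}). This forces $B=0$, hence $\lambda^x(w)=0$, hence $H_x[w,w]=0$, contradicting \eqref{ineq:a-hessian-positive}. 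So $\tau_p(x)$ is strictly positive on $\overline C\setminus\{0\}$, i.e.\ it lies in $\operatorname{int}K$.

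The Jacobian of $\tau_p$ is $-H_x$, which is nonsingular by \eqref{ineq:a-hessian-positive}. So $\tau_p$ is a real-analytic bijection with invertible differential, and by the analytic inverse function theorem a real-analytic diffeomorphism.

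\textbf{The identities.} The relation $\langle x,\tau_p(x)\rangle=m$ is Euler's identity for $p$, and $H_xx=\tau_p(x)$ is Euler's identity for the degree $-1$ homogeneous map $D\log p$ (up to the sign convention $H=-D^2\log p$, which indeed gives $H_xx=\tau_p(x)$). Since $\tau_p$ is homogeneous of degree $-1$, its inverse is too. Therefore
\[
p_*(y)=p(\tau_p^{-1}(y))^{-1}
\]
is positive, real-analytic, and homogeneous of degree $-m\cdot(-1)=m$.

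For \eqref{eq:n-2-17}, put $y=\tau_p(x)$, so that $\log p_*(y)=-\log p(x)$. Differentiating in $y$ with $dx/dy=(D\tau_p)^{-1}=-H_x^{-1}$ gives
\[
D\log p_*(y)=-\tau_p(x)^{\mathsf T}(-H_x^{-1})=H_x^{-1}\tau_p(x)=x,
\]
using $H_xx=\tau_p(x)$. Differentiating \eqref{eq:n-2-17} once more gives $D^2\log p_*(y)=dx/dy=-H_x^{-1}$, which is \eqref{eq:n-2-18}.

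\textbf{Involution.} By \eqref{eq:n-2-17}, $\tau_{p_*}=\tau_p^{-1}$ on $C^\vee$. Hence
\[
(p_*)_*(x)=p_*(\tau_p(x))^{-1}=p(x).
\]
Strictly, $p_*$ is only known to be an analytic function here, not a polynomial, so $(p_*)_*$ is read with the same formal definition. The only delicate step is showing that $\tau_p(C)$ lands in the open interior of $K$, which the slice argument above handles.
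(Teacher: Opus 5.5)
Your proof is correct and follows the paper's argument. Surjectivity and injectivity come from the same coercive, strictly convex minimization; the identities \eqref{eq:n-2-16}--\eqref{eq:n-2-18} come from Euler's identity and the chain rule; and involutivity follows from $\tau_{p_*}=\tau_p^{-1}$. The only difference is cosmetic, in showing $\tau_p(x)\in\operatorname{int}K$: in your slice at base point $x$ the matrix representing $x$ is $A=I$, so $\operatorname{tr}(AB)$ is just $\sum_j\lambda_j^x(w)=D\log p(x)[w]$, which the paper shows is positive directly from the nonnegative, not-all-zero roots, without invoking the Helton--Vinnikov slice.
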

\begin{proof}
For nonzero $v\in\overline C$, the roots relative to $x\in C$ are nonnegative and not all zero, so $D\log p(x)[v]=\sum_j\lambda_j^x(v)>0$. Thus $\tau_p(x)\in C^\vee$. Conversely, for $y\in C^\vee$, the strictly convex function $f_y(x)=\langle x,y\rangle-\log p(x)$ tends to $+\infty$ at the finite boundary of $C$. There is $c_y>0$ such that $\langle x,y\rangle\ge c_y\|x\|$ on $\overline C$, while $p(x)\le M_p\|x\|^m$; hence $f_y$ also tends to $+\infty$ at infinity. Its unique minimizer satisfies $\tau_p(x)=y$. Since $D\tau_p=-H$ is invertible, the inverse function theorem gives a real-analytic inverse.

Euler's identity gives \eqref{eq:n-2-16}. Differentiating $\log p_*(\tau_p(x))=-\log p(x)$ and using $H_xx=\tau_p(x)$ proves \eqref{eq:n-2-17}; a second differentiation gives \eqref{eq:n-2-18}. Finally, $\tau_p(tx)=t^{-1}\tau_p(x)$ gives homogeneity, and the inverse-gradient identity gives involutivity. These are the multiplicative Legendre identities of Etingof, Kazhdan and Polishchuk \cite[Proposition~3.5]{EKP}.
\end{proof}
\section{One-point Jordan rigidity}\label{sec:rigidity}
A Euclidean Jordan algebra is a finite-dimensional real commutative algebra with unit, a positive definite invariant inner product, and the identity $u^2\circ(u\circ v)=u\circ(u^2\circ v)$. Its spectral decomposition writes every element as $\sum_{i=1}^r t_i c_i$ in a Jordan frame of primitive orthogonal idempotents; the determinant is $\Delta(x)=\prod_i t_i$. On a simple algebra the off-diagonal Peirce spaces have a common dimension $d$, and $\dim J=r+d r(r-1)/2$ \cite[Chapters~II--V]{FK}.

Fix $e\in C$ and use its Hessian inner product, suppressing the subscript when the base point is fixed. Define a bilinear multiplication by
\begin{equation}\label{eq:n-4-1}
\langle u\circ v,w\rangle
=
\frac12D^3\log p(e)[u,v,w].
\end{equation}

The symmetry of the third derivative makes this product commutative and the inner product invariant: $\langle u\circ v,w\rangle = \langle u,v\circ w\rangle$, and logarithmic homogeneity gives $e\circ v=v$. Write $v^2=v\circ v$ and $v^3=v\circ v^2$. Logarithmic differentiation gives
\begin{equation}\label{eq:n-4-4}
S_3(v)=\langle v^2,v\rangle,
\qquad
v^2=\frac13\nabla_HS_3(v).
\end{equation}

The quartic defect $\mathcal D_e(v)=S_{4,e}(v)-\|v^2\|_e^2$ can equivalently be written as
\begin{equation}\label{eq:n-4-6}
\mathcal D_e(v)
=
-\frac16D^4\log p(e)[v^4]
-
\left\|
\frac12H_e^{-1}D^3\log p(e)[v,v,\cdot]
\right\|_e^2.
\end{equation}

\begin{theorem}[One-point Jordan rigidity]\label{thm:n-4-1}
For every $e\in C$ and $v\in V$, $\mathcal D_e(v)\ge0$.

The following conditions are equivalent:

\begin{enumerate}
\item \(\mathcal D_e\equiv0\) at some \(e\in C\).
\item \(H_e\overline C=K\) at some \(e\in C\).
\item After an invertible real linear change of coordinates,
\begin{equation}\label{eq:n-4-8}
p=c\prod_{a=1}^s\Delta_{J_a}^{k_a},
\qquad
c>0,\quad k_a\in\mathbb Z_{>0},
\end{equation}
where \(V=\bigoplus_aJ_a\), the \(J_a\) are simple Euclidean Jordan algebras, and each determinant uses only its own variable group.
\item For \(F=-\log p\), the Levi--Civita covariant derivative $\widehat\nabla D^3F$ vanishes at some point of \(C\).

\end{enumerate}
If one condition holds, conditions 1, 2, and 4 hold at every point of \(C\).
\end{theorem}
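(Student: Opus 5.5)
The plan is to prove nonnegativity first, using the slice representation of Lemma~\ref{thm:n-2-1}, and then to let the equality case of that argument drive the rigidity. Fix $e$, $v$, and an arbitrary $w$, and take the slice $(v,w)$, so $p(e+sv+tw)/p(e)=\det(I+sA+tB)$. By \eqref{eq:n-4-1} and the third-order trace identity, $\langle v^2,w\rangle_e=\tr(A^2B)$. Cauchy--Schwarz in the Frobenius inner product together with \eqref{eq:n-2-9}--\eqref{eq:n-2-10} then gives
\[
|\langle v^2,w\rangle_e|\le \|A^2\|_F\|B\|_F=\sqrt{S_{4,e}(v)}\,\|w\|_e .
\]
Choosing $w=v^2$ yields $\|v^2\|_e^2\le S_{4,e}(v)$, which is $\mathcal D_e(v)\ge0$. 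The equality analysis is the key point. If $\mathcal D_e(v)=0$, then in the slice for $(v,v^2)$ we get $B=A^2$ exactly, since the Frobenius norms agree and the pairing is extremal. Hence the characteristic roots of $v^2$ are the $\lambda_j^e(v)^2$, the map $t\mapsto v+tv^2$ has roots $\lambda_j+t\lambda_j^2$, and every $S_{k,e}(v+tv^2)$ is explicit. Differentiating $S_k$ along $v^2$ via the identity $\tfrac1k DS_{k,e}(v)[w]=\tr(A^{k-1}B)$ then gives $\langle v^{k-1},v^2\rangle$-type formulas.

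For (1)$\Rightarrow$(3), assume $\mathcal D_e\equiv0$. I will show that $(V,\circ,\langle\cdot,\cdot\rangle_e)$ with unit $e$ is a Euclidean Jordan algebra whose spectrum is the hyperbolic spectrum. The first step is the equality $B=A^2$ for all $v$. From it we get that the roots of $v^2$ are the squares of the roots of $v$. Applying the same argument to the slice $(v,v^3)$ with $v^3=v\circ v^2$ and polarizing in $v$, we read off power-associativity and $S_{k,e}(v)=\langle v^{k-1},v\rangle$ for $k\le 4$. The crucial step, which I expect to be the main obstacle, is the Jordan identity $v^2\circ(v\circ w)=v\circ(v^2\circ w)$. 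I would attempt it by an eighth-order identity: compute $\|v^4\|^2$ in two ways, once through $S_8(v)=\tr A^8$ in a slice containing $v$ and $v^2$ (where $B=A^2$ forces $v^4=v^2\circ v^2$ to have roots $\lambda_j^4$), and once through the product. The equality of the two expressions should be a sum of squares whose vanishing is the linearized Jordan identity, possibly supplemented by a Cauchy--Schwarz equality at order eight. Once $V$ is a Euclidean Jordan algebra with trace form $\langle\cdot,\cdot\rangle_e$, I decompose it into simple ideals $J_a$. The trace form restricted to $J_a$ is a positive multiple $k_a$ of the Jordan trace, and $k_a$ is a positive integer because it equals a multiplicity of a characteristic root of a primitive idempotent. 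Then $\log p(e+tv)-\log p(e)=\sum_j\log(1+t\lambda_j)$ agrees with $\sum_a k_a\log\Delta_{J_a}(e+tv)$ by matching all power sums via Newton's identities. This gives \eqref{eq:n-4-8}.

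The converse (3)$\Rightarrow$(1),(2),(4) at every point is standard Jordan theory. For $F=-\log\Delta$ one has $H_x=P(x)^{-1}$ (the inverse quadratic representation), which maps the symmetric cone onto itself, and the cone is self-dual for the trace form; the cubic form at $x$ is the Jordan product of the mutation algebra, which is Euclidean Jordan with the same spectrum, so $\mathcal D_x\equiv0$. Parallelism of $D^3F$ is the Koecher/Hildebrand fact for symmetric cones, and all of this passes to products with integer weights. For (4)$\Leftrightarrow$(1), I will write $\widehat\nabla D^3F=D^4F-\tfrac32\operatorname{Sym}(D^3F\cdot H^{-1}\cdot D^3F)$ with the Christoffel symbols of the Hessian metric. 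Its value on $v^4$ is, by \eqref{eq:n-4-6}, a nonzero multiple of $\mathcal D_e(v)$. So (4) gives (1), while (3) gives (4).

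Finally, for (2)$\Rightarrow$(1), I will use Theobald's equality case from Lemma~\ref{thm:n-2-2}. If $H_e\overline C=K$, then by \eqref{eq:n-3-11} we have $\lambda_1^e(v)=\max\{\langle u,v\rangle_e:u\in\overline C,\ \langle u,e\rangle_e=1\}$. Compare this with $\langle u,v\rangle_e\le\sum_j\lambda_j(u)\lambda_j(v)\le\lambda_1(v)$, where the roots of $u$ are nonnegative and sum to one. At a maximizer, equality forces $u$ to have a single nonzero root and to be simultaneously diagonalizable with $v$ in a slice. Iterating on the orthogonal complement produces a frame $c_1,\dots,c_m$ with $v=\sum\lambda_jc_j$. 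The slice for $(v,v^2)$ then reduces to diagonal matrices and gives $B=A^2$, that is $\mathcal D_e(v)=0$. Since (3) is coordinate-free in $e$, the final global assertion follows from (3)$\Rightarrow$(1),(2),(4) at every point.
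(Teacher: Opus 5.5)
Your nonnegativity argument (Frobenius Cauchy--Schwarz in the $(v,v^2)$ slice, with equality forcing $B=A^2$) is sound, and so are the identification of the diagonal of $\widehat\nabla D^3F$ with $6\mathcal D_e$ and the converse via the quadratic representation. All three match the paper.

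\textbf{First gap: the Jordan identity.} This is the heart of $(1)\Rightarrow(3)$, and you leave it as a hope. Computing $\|v^2\circ v^2\|^2$ ``in two ways'' cannot work: both ways only return $S_4(v^2)=S_8(v)$, which is the square-spectrum identity applied to $v^2$, and neither involves $v\circ v^3$.

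The missing idea is to compare the two candidate fourth powers $a=(u^2)^2$ and $b=u\circ u^3$, and to show $\|a\|^2=\|b\|^2=\langle a,b\rangle=S_8(u)$, so that $a=b$. Linearizing $(u^2)^2=u\circ u^3$ and subtracting the adjoint (each $L_u$ is self-adjoint by invariance) then gives $5[L_{u^2},L_u]=0$. This requires two inputs you do not supply.
\begin{itemize}
\item The cube spectrum. ``The same argument'' in the $(u,u^3)$ slice needs $\|u^3\|^2=S_6(u)$ as input. This comes from the $t^2$-coefficient of $S_4(u+tu^2)=\|(u+tu^2)^2\|^2$, combined with $\tfrac14DS_4(u)[w]=\langle u^3,w\rangle$. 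Only then does $\operatorname{tr}(C-A^3)^2=0$ follow.
\item The value $\langle u^2,(u^3)^2\rangle=S_8(u)$. The paper gets this for $u\in C$ from the equality case of Theobald's theorem in the $(u^2,u^3)$ slice: $\operatorname{tr}(UV)=S_5(u)$ forces a common ordered eigenbasis, and positivity of the roots gives $V=U^{3/2}$.
\end{itemize}
This value feeds into $\|b\|^2$ through the $t^2$-coefficient of $S_4(u+tu^3)$. Separately, $\langle a,b\rangle$ comes from differentiating $S_3(u^2)=S_6(u)$ along $u^3$.

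Relatedly, knowing $S_k=\langle v^{k-1},v\rangle$ only for $k\le4$ does not let you match all power sums. You need, for example, $S_{2^r}(v)=\|v^{2^{r-1}}\|^2$ by iterating the square spectrum, then the translation $v\mapsto v+te$ to recover every $S_j$ with $j\le m$.

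\textbf{Second gap: $(2)\Rightarrow(1)$ fails as written.}
\begin{itemize}
\item When $p$ has a repeated factor, for example $p=\Delta^k$ with $k\ge2$, every characteristic root of every $u$ has multiplicity at least $k$. So a maximizer can never have ``a single nonzero root''.
\item ``Iterating on the orthogonal complement'' is unsupported. Nothing shows that $\overline C\cap c_1^{\perp}$ carries a lower-degree hyperbolic structure containing $v-\lambda_1c_1$.
\item Even granting a frame $v=\sum_j\lambda_jc_j$, concluding $B=A^2$ would require $v^2=\sum_j\lambda_j^2c_j$. That is third-order information your construction never produces.
\end{itemize}

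The paper avoids all three problems as follows. At a generic point $x$, take $c_i=\nabla_{H_e}a_i(x)$, the $H_e$-gradients of the distinct analytic root branches, with multiplicities $r_i$.
\begin{itemize}
\item Monotonicity of the branches plus $H_e\overline C=K$ gives $c_i\in\overline C$, hence $G_{ij}=\langle c_i,c_j\rangle_e\ge0$.
\item The relations $\langle e,c_i\rangle_e=1$, $\langle x,c_i\rangle_e=a_i$, $\sum_ir_ic_i=e$ and $\sum_ir_ia_ic_i=x$ give $\sum_{i,j}r_ir_jG_{ij}(a_i-a_j)^2=0$. Hence the $c_i$ are orthogonal with $G_{ii}=1/r_i$.
\item Finally $x^2=\tfrac13\nabla_{H_e}S_3(x)=\sum_ir_ia_i^2c_i$, which yields $\|x^2\|_e^2=S_4(x)$ directly.
\end{itemize}

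A smaller omission: in $(3)\Rightarrow(1)$ you must also identify the given cone $C$ with the product of symmetric cones, after sign changes in each factor. The paper does this using the factor $t_it_j-s^2$.
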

\begin{proof}[Proof of Theorem~\ref{thm:n-4-1}]
The proof proceeds from the square and cube spectra to an eighth-order identity, which implies the Jordan identity. The Jordan spectral decomposition and Newton identities then recover the polynomial, including its integer multiplicities. We finish by relating this reconstruction to Hessian self-duality and the covariant derivative of the cubic tensor.

\subsection{Nonnegativity and the square spectrum}

The defect compares $S_4(u)$ with $\|u^2\|^2$. A slice in the directions $u$ and $u^2$ turns this into a comparison between the matrix representing $u^2$ and the square of the matrix representing $u$. Apply Lemma~\ref{thm:n-2-1} to the pair \((u,u^2)\). The representing matrices satisfy
\[
\operatorname{tr}B^2=\|u^2\|^2,
\qquad
\operatorname{tr}A^4=S_4(u),
\]
and $\operatorname{tr}(A^2B)=\langle u^2,u^2\rangle$. Therefore
\begin{equation}\label{eq:n-4-9}
\mathcal D_e(u)=\operatorname{tr}(B-A^2)^2\ge0.
\end{equation}

Assume henceforth that \(\mathcal D_e\equiv0\). Equation \eqref{eq:n-4-9} forces \(B=A^2\), giving
\begin{align}
 \frac{p(e+su+tu^2)}{p(e)} &= \det(I+sA+tA^2),\label{eq:n-4-10}\\
 \lambda(u^2) &= \bigl(\lambda_1(u)^2,\ldots,\lambda_m(u)^2\bigr)^\downarrow,\label{eq:n-4-11}\\
 S_k(u^2)&=S_{2k}(u)\qquad(k\ge1).\label{eq:n-4-12}
\end{align}

\subsection{The cube spectrum}

On a suitable slice, $u^3$ will likewise be represented by the cube of the matrix representing $u$. Differentiate $S_4(u)=\|u^2\|^2$. By invariance,
\begin{equation}\label{eq:n-4-13}
\frac14DS_4(u)[w]=\langle u^3,w\rangle.
\end{equation}

Set \(a=(u^2)^2\). From \eqref{eq:n-4-10}, $S_4(u+tu^2) = \operatorname{tr}(A+tA^2)^4$. On the other hand,
\[
S_4(u+tu^2)
=
\|u^2+2tu^3+t^2a\|^2.
\]

Equating the coefficients of \(t\) and \(t^2\) yields
\begin{align}
 \langle u^2,u^3\rangle&=S_5(u),\label{eq:n-4-14}\\
 6S_6(u) &= 4\|u^3\|^2+2\langle u^2,a\rangle.\label{eq:n-4-15}
\end{align}
But $\langle u^2,a\rangle = S_3(u^2)=S_6(u)$. Hence
\begin{equation}\label{eq:n-4-16}
\|u^3\|^2=S_6(u).
\end{equation}

Now apply Lemma~\ref{thm:n-2-1} anew to \((u,u^3)\), with matrices \(A,C\). Equation \eqref{eq:n-4-13} implies
\[
\operatorname{tr}(A^3C)
=
\frac14DS_4(u)[u^3]
=
S_6(u).
\]
Also, $\operatorname{tr}A^6=S_6(u)$ and $\operatorname{tr}C^2=S_6(u)$. Therefore $\operatorname{tr}(C-A^3)^2=0$, so \(C=A^3\). We have recovered
\begin{equation}\label{eq:n-4-17}
\frac{p(e+su+tu^3)}{p(e)}
=
\det(I+sA+tA^3).
\end{equation}

\subsection{Fourth-power associativity}
The square- and cube-spectrum identities allow us to compute both squared norms and the inner product of $(u^2)^2$ and $u\circ u^3$; these three quantities force the vectors to coincide. Take $u\in C$ and represent $(u^2,u^3)$ by matrices $U,V$. Their spectra are $\lambda_j(u)^2$ and $\lambda_j(u)^3$, and \eqref{eq:n-4-14} gives $\tr(UV)=\sum_j\lambda_j(u)^5$. The equality case of Theobald's theorem \cite{Theobald1975} gives a common ordered orthogonal eigenbasis. Since the roots are positive, the paired eigenvalues satisfy $\lambda_j(u)^3=(\lambda_j(u)^2)^{3/2}$, including repeated eigenvalues. Thus $V=U^{3/2}$ and
\begin{equation}\label{eq:n-4-19}
 \langle u^2,(u^3)^2\rangle=\tr(UV^2)=S_8(u).
\end{equation}

Put $a=(u^2)^2$ and $b=u\circ u^3$. The square-spectrum identity \eqref{eq:n-4-12} gives the norm of $a$. Comparing the coefficient of $t^2$ in $S_4(u+tu^3)=\|(u+tu^3)^2\|^2$, using \eqref{eq:n-4-17} and then \eqref{eq:n-4-19}, gives the norm of $b$:
\begin{align}
 \|a\|^2&=S_4(u^2)=S_8(u),\label{eq:n-4-20}\\
 6S_8(u)&=4\|b\|^2+2\langle u^2,(u^3)^2\rangle,\notag\\
 \|b\|^2&=S_8(u).\label{eq:n-4-21}
\end{align}
To compute their inner product, differentiate $S_3(u^2)=S_6(u)$ and use \eqref{eq:n-4-4}. Taking $w=u^3$ and applying \eqref{eq:n-4-17} yields
\begin{align}
 DS_6(u)[w]&=6\langle u\circ a,w\rangle,\label{eq:n-4-22}\\
 DS_6(u)[u^3]&=6\tr(A^5A^3)=6S_8(u),\notag\\
 \langle a,b\rangle&=S_8(u).\label{eq:n-4-23}
\end{align}
The three inner products in \eqref{eq:n-4-20}--\eqref{eq:n-4-23} therefore give
\begin{align}
 \|(u^2)^2-u\circ(u\circ u^2)\|^2&=0,\label{eq:n-4-24}\\
 (u^2)^2&=u\circ(u\circ u^2).\label{eq:n-4-25}
\end{align}
The last identity holds on the nonempty open set $C$, and hence on all of $V$ as a polynomial identity.

\subsection{The Jordan identity}

Write $L_uv=u\circ v$. By invariance each $L_u$ is self-adjoint, so one linearization of \eqref{eq:n-4-25} suffices to obtain the Jordan identity. Differentiating it, we obtain
\[
4L_{u^2}L_u
=
L_{u^3}+L_uL_{u^2}+2L_u^3.
\]
Subtract its adjoint:
\begin{equation}\label{eq:n-4-26}
5[L_{u^2},L_u]=0.
\end{equation}
This is the Jordan identity. The algebra is formally real as well, since
\[
\sum_i v_i^2=0
\quad\Longrightarrow\quad
0=\left\langle e,\sum_i v_i^2\right\rangle
=\sum_i\|v_i\|^2,
\]
so every $v_i=0$. Thus $(V,\circ,e)$ is a Euclidean Jordan algebra.

\subsection{Recovery of the polynomial}

The power sums of the characteristic roots determine $p$, and they can now be computed inside the Jordan algebra. Decompose $V=\bigoplus_aJ_a$ into simple ideals. On each $J_a$, the invariant inner product is $k_a>0$ times the standard trace inner product: represent the inner product relative to the trace form by a positive self-adjoint operator \(T\). Invariance makes \(T\) commute with every multiplication operator. Its eigenspaces are therefore ideals, and simplicity forces \(T\) to be scalar. Distinct simple ideals are orthogonal: for $x\in J_a$, $y\in J_b$ and $a\ne b$, invariance and the unit $e_a$ of $J_a$ give $\langle x,y\rangle=\langle e_a\circ x,y\rangle=\langle x,e_a\circ y\rangle=0$. The spectral and simple-ideal decompositions used here are those of Euclidean Jordan algebras; see \cite[Chapters~II--V]{FK}.

Let \(c\) be a primitive idempotent in \(J_a\). Since \(c^2=c\), the multiset \(\lambda(c)\) is invariant under componentwise squaring, by \eqref{eq:n-4-11}. Squaring permutes this finite multiset. Each of its entries $t$ therefore satisfies $t^{2^r}=t$ for some $r\ge1$, whose only real solutions are $0$ and $1$. Therefore
\begin{align}
 k_a&=\|c\|^2=S_2(c)\in\mathbb Z_{>0}.\label{eq:n-4-27}\\
 \sum_a k_a\operatorname{rank}J_a &= \|e\|^2=m.\label{eq:n-4-28}
\end{align}

Let \(\mu_{a,i}(v_a)\) denote the Jordan characteristic values in \(J_a\). Repeated use of the square-spectrum identity gives, for \(M=2^r\), \(r\ge1\),
\begin{equation}\label{eq:n-4-29}
S_M(v)
=
\|v^{M/2}\|^2
=
\sum_a k_a\sum_i\mu_{a,i}(v_a)^M.
\end{equation}

Choose one such \(M\ge m\), and apply \eqref{eq:n-4-29} to \(v+te\). Both characteristic spectra translate by \(t\). Comparing coefficients of \(t^{M-j}\) yields
\begin{equation}\label{eq:n-4-30}
S_j(v)
=
\sum_a k_a\sum_i\mu_{a,i}(v_a)^j
\qquad(0\le j\le m).
\end{equation}

These are power sums of multisets with the same total multiplicity $m$. Starting from $e_0=1$, Newton's recursion
\[
 j e_j=\sum_{i=1}^j(-1)^{i-1}e_{j-i}S_i\qquad(1\le j\le m)
\]
determines all their elementary symmetric functions. Hence
\begin{equation}\label{eq:n-4-31}
\frac{p(te-v)}{p(e)}
=
\prod_a\prod_i
\bigl(t-\mu_{a,i}(v_a)\bigr)^{k_a}.
\end{equation}
Taking constant terms,
\begin{equation}\label{eq:n-4-32}
p(v)=p(e)\prod_a\Delta_{J_a}(v_a)^{k_a}.
\end{equation}
This proves $1\Rightarrow3$. Conversely, suppose $p$ has the product form \eqref{eq:n-4-8}. In each simple factor a hyperbolicity direction has Jordan eigenvalues of one sign. Indeed, if two eigenvalues $t_i,t_j$ had opposite signs, choose $w\in J_{ij}$ with $w^2=c_i+c_j$. The restriction of the determinant in that direction contains the factor $t_it_j-s^2$, contradicting the real-zero property at the hyperbolicity point. Changing the signs of the factor coordinates as necessary therefore identifies the given full cone $C$ with the product of positive Jordan cones. At the product of their units the multiplication \eqref{eq:n-4-1} is the Jordan multiplication and $S_4(v)=\|v^2\|^2$. The quadratic representation $P(a)=2L_a^2-L_{a^2}$ of a Euclidean Jordan algebra satisfies $P(a)\Omega=\Omega$ for invertible $a$ and $\Delta(P(a)x)=\Delta(a)^2\Delta(x)$ \cite[Chapters~III--IV]{FK}. Taking $a=x^{-1/2}$ carries a positive point $x$ to the unit. The Hessian, cubic product, and fourth-order defect transform under this linear map, while the constant multiplier of the determinant disappears after logarithmic differentiation. The equality therefore holds at every point of $C$.

\subsection{One-point Hessian self-duality}

Self-duality forces the $H_e$-gradients of the distinct root branches at a generic point to be mutually orthogonal elements of $\overline C$; this orthogonality makes the defect vanish. Assume $H_e\overline C=K$. At a generic point $x$, let \(a_i(x)\) be the distinct characteristic roots, with multiplicities \(r_i\), and put $c_i=\nabla_{H_e}a_i(x)$. Section~\ref{sec:root-branches} supplies these branches on a dense open set, with fixed multiplicities even when $p$ has repeated factors.

By the monotonicity established in Section~\ref{sec:root-branches}, $H_ec_i$ is nonnegative on $\overline C$; the assumed self-duality therefore gives $c_i\in\overline C$.

Translation, homogeneity, and the first two power sums give
\begin{align}
 \langle e,c_i\rangle&=1, \qquad \langle x,c_i\rangle=a_i,\label{eq:n-4-33}\\
 \sum_i r_ic_i&=e, \qquad \sum_i r_ia_ic_i=x.\label{eq:n-4-34}
\end{align}

Let
\[
G_{ij}=\langle c_i,c_j\rangle\ge0,
\qquad
P_{ij}=r_jG_{ij}.
\]
Equations \eqref{eq:n-4-33}--\eqref{eq:n-4-34} give $P\mathbf1=\mathbf1$ and $Pa=a$. Symmetry of $G$ also gives $\sum_i r_iP_{ij}=r_j$. Expanding the square, we obtain
\[
 \sum_{i,j}r_iP_{ij}(a_i-a_j)^2
 =2\sum_i r_i a_i^2-2\sum_i r_i a_i(Pa)_i=0.
\]
Equivalently, $\sum_{i,j}r_ir_jG_{ij}(a_i-a_j)^2=0$. The \(a_i\) are distinct, so \(G_{ij}=0\) for \(i\ne j\), and the row sums give \(G_{ii}=1/r_i\).

Since
\[
x^2=\frac13\nabla_HS_3(x)=\sum_i r_ia_i^2c_i,
\]
we obtain $\|x^2\|^2=\sum_i r_ia_i^4=S_4(x)$. The polynomial $\mathcal D_e$ therefore vanishes identically. Conversely, a product of Jordan positive cones is self-dual for its weighted trace inner product. This proves conditions 2 and 3 are equivalent.

\subsection{The covariant fourth derivative}

Finally, we identify six times the defect with the diagonal of the covariant derivative of the cubic form, the tensor whose vanishing on open sets was studied by Hildebrand \cite{Hildebrand}. For \(F=-\log p\), let \(g=D^2F=H\). In affine coordinates the Levi--Civita symbols are $\Gamma^a_{ij}=\tfrac12g^{ab}F_{ijb}$. Subtracting the three connection terms from the affine derivative of $F_{ijk}$ gives
\begin{equation}\label{eq:n-4-36}
(\widehat\nabla D^3F)_{ijkl}
=
F_{ijkl}
-\frac12g^{ab}
\left(
F_{ija}F_{klb}
+
F_{ika}F_{jlb}
+
F_{ila}F_{jkb}
\right).
\end{equation}
This tensor is symmetric, and its diagonal value is
\begin{equation}\label{eq:n-4-37}
(\widehat\nabla D^3F)_e[v^4]
=
6\mathcal D_e(v).
\end{equation}
Polarization proves the remaining equivalence. For the equation on an open set, see Hildebrand \cite{Hildebrand}.
\end{proof}
\subsection{Equality in the dimension bound}\label{sec:equality}
\begin{proof}[Equality in Theorem~\ref{thm:n-5-1}]
Suppose equality holds in Theorem~\ref{thm:n-5-1}, and retain its random vectors $X=H_e^{-1}U$ and $Z=X-e/m$. The slack in \eqref{eq:n-5-10} is continuous and nonnegative, so it vanishes throughout the support. Equality in Cauchy--Schwarz forces $\lambda^e(Z)=((m-1)/m,-1/m,\ldots,-1/m)$. Thus
\begin{equation}\label{eq:n-5-11}
\lambda^e(X)=(1,0,\ldots,0),
\qquad X\in \overline C.
\end{equation}

Because the angular support generates \(S_e\), $H_e^{-1}K\subseteq \overline C$. Together with \eqref{eq:n-2-14}, this gives \(H_e\overline C=K\). Theorem~\ref{thm:n-4-1} yields
$p=c\prod_a\Delta_a^{k_a}$.

Every extreme point of the compact base $H_e^{-1}S_e$ belongs to the compact support generating it: by Carath\'eodory's theorem it is a convex combination of at most $n+1$ support points, and extremality forces every point with positive coefficient to equal that extreme point. Thus every extreme ray of $\overline C$ has $p$-rank one. A primitive idempotent in $J_a$ has its nonzero $p$-root repeated $k_a$ times, so $k_a=1$ for every $a$. Let $r_a=\operatorname{rank}J_a$. Restrict the positive Laplace representation to one factor while fixing the others at their units. Directions supported on one factor are limits of interior directions, so \(\Delta_a^{-\alpha}\) is completely monotone. For a factor of rank at least two apply the inequality in Theorem~\ref{thm:n-5-1}; a rank-one factor has $n_a=r_a=1$ and satisfies the same inequality directly. Summing gives $n\le m+\alpha\sum_a r_a(r_a-1)$. If there were at least two factors, $\sum_a r_a(r_a-1)<m(m-1)$, contradicting equality. There is therefore one simple factor, whose dimension $n=m+\frac d2m(m-1)$ gives $\alpha=d/2$. Conversely, the positive Riesz measure at $\alpha=d/2$ exists on every simple Euclidean Jordan cone \cite[Theorem~1 and the following example]{Gindikin}. The Peirce dimension formula then gives equality.  
\end{proof}
\section{Cumulants and the normalized Hessian determinant}\label{sec:ma}
\subsection{A fourth-cumulant inequality}

Let $Y$ have the tilted law \eqref{eq:n-3-2} at a positive admissible exponent, write $\Sigma=\operatorname{Cov}(Y)$, and let $\kappa_j$ denote its cumulants. The moment generating function is finite near zero. Differentiating its logarithm, as in Corollary~\ref{cor:exponential-family}, gives $\kappa_k=\alpha(-1)^{k+1}D^k\log p(e)$, and hence
\begin{align}
 \Sigma&=\alpha H_e,\label{eq:n-5-1}\\
 \kappa_3(v,v,\cdot) &= 2\alpha H_e(v^2,\cdot),\label{eq:n-5-2}\\
 \kappa_4(v^4)&=6\alpha S_4(v).\label{eq:n-5-3}
\end{align}
Therefore
\begin{equation}\label{eq:n-5-4}
\kappa_4(v^4)
\ge
\frac32\,
\kappa_3(v,v,\cdot)\,
\Sigma^{-1}\,
\kappa_3(v,v,\cdot).
\end{equation}
Indeed, the contraction on the right is $4\alpha\|v^2\|_e^2$, so the difference is exactly $6\alpha\mathcal D_e(v)$.

Equality in every direction, at one exponent and one tilt, is equivalent to the Jordan product form \eqref{eq:n-4-8}. Integrating over Gaussian directions gives the following scalar version.

\begin{proposition}[The fourth-moment defect]\label{prop:scalar-fourth-moment}
Let $Y$ have the tilted law \eqref{eq:n-3-2}, with $\alpha\in G(p)\setminus\{0\}$, and put $\Sigma=\operatorname{Cov}(Y)$ and $Z=\Sigma^{-1/2}(Y-\mathbb EY)$. Define $T_{ijk}=\mathbb EZ_iZ_jZ_k$ and $b_k=\sum_iT_{iik}$. Then
\begin{equation}\label{eq:n-5-5}
\mathfrak D(Z)
:=\mathbb E|Z|^4-n(n+2)-\|T\|_F^2-\frac12|b|^2\ge0.
\end{equation}
Equality at one exponent and one tilt holds if and only if $p$ has the Jordan product form \eqref{eq:n-4-8}.
\end{proposition}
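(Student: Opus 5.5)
The plan is to show that $\mathfrak D(Z)$ equals a positive multiple of the Gaussian average of the quartic defect $\mathcal D_e$, taken in the coordinates standardized by $\Sigma$. Nonnegativity and the equality case then follow from Theorem~\ref{thm:n-4-1}.

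\emph{Step 1: transfer to standardized cumulants.} The moment generating function of $Y$ is finite near zero, so all moments exist. Let $\tilde\kappa_j$ be the cumulant tensors of $Z$. Then $\tilde\kappa_j(w^j)=\kappa_j(v^j)$ with $v=\Sigma^{-1/2}w$, and $\tilde\kappa_2=I$. Since $Z$ is centred, its third moments equal its third cumulants, so $T=\tilde\kappa_3$. Its fourth moments are
\[
\mathbb EZ_iZ_jZ_kZ_l=\tilde\kappa_{4,ijkl}+\delta_{ij}\delta_{kl}+\delta_{ik}\delta_{jl}+\delta_{il}\delta_{jk}.
\]
Contracting with $i=j$, $k=l$ gives
\[
\mathbb E|Z|^4=n(n+2)+\sum_{i,k}\tilde\kappa_{4,iikk}.
\]

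\emph{Step 2: Gaussian averaging over directions.} Let $g$ be a standard Gaussian vector independent of everything else. Wick's formula gives
\[
\mathbb E_g\tilde\kappa_4(g^4)=3\sum_{i,k}\tilde\kappa_{4,iikk}.
\]
Writing $T(g,g,\cdot)_k=\sum_{i,j}T_{ijk}g_ig_j$, it also gives
\[
\mathbb E_g|T(g,g,\cdot)|^2=\sum_k\bigl(2\textstyle\sum_{i,j}T_{ijk}^2+b_k^2\bigr)=2\|T\|_F^2+|b|^2.
\]
Combining these with Step 1,
\[
\mathfrak D(Z)=\frac13\,\mathbb E_g\Bigl[\tilde\kappa_4(g^4)-\tfrac32|T(g,g,\cdot)|^2\Bigr].
\]
For $w=\Sigma^{1/2}v$ we have $\tilde\kappa_3(w,w,\cdot)=\Sigma^{-1/2}\kappa_3(v,v,\cdot)$. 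Hence $|T(w,w,\cdot)|^2=\kappa_3(v,v,\cdot)\Sigma^{-1}\kappa_3(v,v,\cdot)$, which is the contraction appearing in \eqref{eq:n-5-4}. By \eqref{eq:n-5-1}--\eqref{eq:n-5-3} the bracket equals $6\alpha\mathcal D_e(\Sigma^{-1/2}g)$. Therefore
\[
\mathfrak D(Z)=2\alpha\,\mathbb E_g\,\mathcal D_e(\Sigma^{-1/2}g)\ge0,
\]
by the nonnegativity part of Theorem~\ref{thm:n-4-1}.

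\emph{Step 3: equality.} The function $\mathcal D_e$ is a nonnegative continuous quartic, and the Gaussian law has full support. So the average vanishes iff $\mathcal D_e\equiv0$, i.e.\ iff condition 1 of Theorem~\ref{thm:n-4-1} holds at the tilt point $e$. That theorem then gives the Jordan product form \eqref{eq:n-4-8}. Conversely, the product form makes $\mathcal D_x\equiv0$ at every $x\in C$, again by Theorem~\ref{thm:n-4-1}, so equality holds at every tilt and every admissible exponent.

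The main obstacle is the bookkeeping in Step 2, not any conceptual difficulty. One must get the Wick pairing counts exactly right (the factor $3$ for $\tilde\kappa_4$, and the split $2\|T\|_F^2+|b|^2$). One must also check carefully that contraction by $\Sigma^{-1}$ in \eqref{eq:n-5-4} turns into the Euclidean norm after standardization, so that the constants $1$ and $\tfrac12$ in \eqref{eq:n-5-5} match exactly.
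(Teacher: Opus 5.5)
Your proposal is correct and follows the paper's proof essentially step for step. You whiten, average the cumulant inequality \eqref{eq:n-5-4} over an independent Gaussian direction using the Wick counts $3\sum_{i,k}\kappa^Z_{iikk}$ and $2\|T\|_F^2+|b|^2$, and settle the equality case with Theorem~\ref{thm:n-4-1} and the full support of the Gaussian; your normalization $2\alpha\,\mathbb E_g\mathcal D_e(\Sigma^{-1/2}g)$ agrees with the paper's $\frac{2}{\alpha}\mathbb E\mathcal D_e(G_e)$ by quartic homogeneity, since $\Sigma=\alpha H_e$.
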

\begin{proof}
Integrate the whitened version of \eqref{eq:n-5-4} against an independent standard Gaussian direction \(v\). Writing $\kappa^Z$ for the cumulants after whitening, the identity $\E v_iv_jv_kv_l=\delta_{ij}\delta_{kl}+\delta_{ik}\delta_{jl}+\delta_{il}\delta_{jk}$ gives \[
 \mathbb E_v\kappa_4^Z(v^4)=3\sum_{i,j}\kappa^Z_{iijj},
 \qquad
 \mathbb E_v|T(v,v,\cdot)|^2=2\|T\|_F^2+|b|^2.
\]
Since $\operatorname{Cov}(Z)=I$, we have $\sum_{i,j}\kappa^Z_{iijj}=\mathbb E|Z|^4-n(n+2)$; division by $3$ proves \eqref{eq:n-5-5}.

More precisely, let $G_e$ be the Gaussian vector with covariance $H_e^{-1}$, so that its coordinates in an $H_e$-orthonormal basis are independent standard Gaussians. A whitened direction $v$ corresponds to $\alpha^{-1/2}H_e^{-1/2}v$ in the original coordinates. Since $\mathcal D_e$ is quartic, integrating the exact difference $6\alpha\mathcal D_e$ and dividing by $3$ gives
\begin{equation}\label{eq:n-5-6}
\mathfrak D(Z)
=
\frac2\alpha\,\mathbb E\mathcal D_e(G_e).
\end{equation}
A nonnegative polynomial has zero integral against a full-support Gaussian only if it vanishes identically. Theorem~\ref{thm:n-4-1} gives the equality classification. \end{proof}

Define
\begin{equation}\label{eq:n-6-1}
\Psi(x)
=
\log\det H_x+\frac{2n}{m}\log p(x),
\qquad
\mathcal Lf=\operatorname{tr}(H_x^{-1}D^2f).
\end{equation}

The function $\Psi$ is homogeneous of degree zero, and a linear coordinate change adds a constant to it. The coefficients of the nondivergence operator $\mathcal L$ are the entries of the inverse Hessian.

\subsection{The exact defect identity}

\begin{proposition}\label{thm:n-6-1}
Let \(G_x\) be standard Gaussian in the \(H_x\) metric. Then
\begin{equation}\label{eq:n-6-2}
\mathcal L\Psi
=
2\,\mathbb E\mathcal D_x(G_x)
+
\frac12\|d\Psi\|_{H_x^{-1}}^2.
\end{equation}

Equivalently, for \(u=e^{-\Psi/2}\),
\begin{equation}\label{eq:n-6-3}
\mathcal Lu=-u\,\mathbb E\mathcal D_x(G_x)\le0.
\end{equation}
\end{proposition}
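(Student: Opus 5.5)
We prove \eqref{eq:n-6-2} by a direct tensor computation in affine coordinates, using logarithmic homogeneity to evaluate the lower-order terms. The nonnegativity in \eqref{eq:n-6-3} then comes from Theorem~\ref{thm:n-4-1}. Put $F=-\log p$ and $g=D^2F=H_x$, and write $F_{ij\cdots}$ for partial derivatives. Indices are raised with $g^{-1}$. The operator is $\mathcal L=g^{ij}\partial_i\partial_j$, and $\Psi=\log\det g-\frac{2n}{m}F$.

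\emph{Step 1: compute $\mathcal L\Psi$.} Set $b_k=\partial_k\log\det g=g^{ij}F_{ijk}$. A second differentiation gives
\[
\partial_l b_k=g^{ij}F_{ijkl}-g^{ia}g^{jb}F_{abl}F_{ijk}.
\]
Contracting with $g^{kl}$ yields
\[
\mathcal L\log\det g=g^{ij}g^{kl}F_{ijkl}-\|D^3F\|^2,
\]
where $\|D^3F\|^2$ is the full contraction of the cubic tensor with itself in the metric $g$. Since $D^2F=g$, we have $\mathcal LF=g^{ij}g_{ij}=n$. Hence
\[
\mathcal L\Psi=g^{ij}g^{kl}F_{ijkl}-\|D^3F\|^2-\frac{2n^2}{m}.
\]

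\emph{Step 2: average the defect.} Use the form \eqref{eq:n-4-6} of the defect. The signs are $D^4\log p=-D^4F$ and $D^3\log p=-D^3F$, and the sign of the cubic term is irrelevant because it appears squared. This gives
\[
\mathcal D_x(v)=\tfrac16F_{ijkl}v^iv^jv^kv^l-\tfrac14\bigl\|g^{-1}F_{ij\cdot}v^iv^j\bigr\|_g^2.
\]
The Gaussian $G_x$ has covariance $g^{-1}$. The same Wick pairing count used in the proof of Proposition~\ref{prop:scalar-fourth-moment} gives
\[
\mathbb E F[G^4]=3g^{ij}g^{kl}F_{ijkl},\qquad \mathbb E\|F_{ij\cdot}G^iG^j\|^2_{g^{-1}}=2\|D^3F\|^2+\|b\|^2_{g^{-1}}.
\]
Therefore
\[
2\,\mathbb E\mathcal D_x(G_x)=g^{ij}g^{kl}F_{ijkl}-\|D^3F\|^2-\tfrac12\|b\|^2_{g^{-1}}.
\]
Subtracting this from Step 1 reduces \eqref{eq:n-6-2} to the single identity
\[
\tfrac12\|d\Psi\|_{g^{-1}}^2=\tfrac12\|b\|^2_{g^{-1}}-\tfrac{2n^2}{m}.
\]

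\emph{Step 3: homogeneity identities.} This is the only step that needs care, mainly with signs. Write $d\Psi=b-\frac{2n}{m}dF$. By Proposition~\ref{thm:n-2-3}, $dF=-\tau_p(x)$ and $g\,x=\tau_p(x)$, so $g^{-1}dF=-x$. This gives $\|dF\|^2_{g^{-1}}=\langle x,\tau_p(x)\rangle=m$. The second derivatives $F_{ij}$ are homogeneous of degree $-2$, so Euler's identity gives $F_{ijk}x^k=-2F_{ij}$. Contracting with $g^{ij}$ yields $b_kx^k=-2n$, and hence $\langle b,g^{-1}dF\rangle=2n$. Expanding the square,
\[
\|d\Psi\|^2_{g^{-1}}=\|b\|^2_{g^{-1}}-\tfrac{4n}{m}\cdot2n+\tfrac{4n^2}{m^2}\cdot m=\|b\|^2_{g^{-1}}-\tfrac{4n^2}{m}.
\]
This is exactly the identity required in Step 2, which proves \eqref{eq:n-6-2}.

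\emph{Step 4: the form \eqref{eq:n-6-3}.} For $u=e^{-\Psi/2}$ the chain rule gives
\[
\mathcal Lu=u\bigl(-\tfrac12\mathcal L\Psi+\tfrac14\|d\Psi\|^2_{g^{-1}}\bigr).
\]
Substituting \eqref{eq:n-6-2}, the gradient terms cancel and $\mathcal Lu=-u\,\mathbb E\mathcal D_x(G_x)$. Theorem~\ref{thm:n-4-1} gives $\mathcal D_x\ge0$, so $\mathcal Lu\le0$.
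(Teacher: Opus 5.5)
Your proof is correct and follows essentially the same route as the paper. Both compute $\mathcal L\Psi$ from the first and second derivatives of $\log\det H$, evaluate $\mathbb E\mathcal D_x(G_x)$ by the three Wick pairings, and close the gap with the homogeneity identities $\langle x,\tau_p(x)\rangle=m$ and $b_kx^k=-2n$, which are the paper's $\|x\|_x^2=m$ and $\langle\zeta,x\rangle_x=n$. The difference is only notational: the paper works in an $H_x$-orthonormal frame with the contractions $A$, $N$, $\zeta$, while you use index contractions in affine coordinates.
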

\begin{proof}
Both sides of \eqref{eq:n-6-2} will be expressed in terms of the same three contractions $A$, $N$ and $\zeta$ of the third and fourth derivatives at $x$. Fix $x$ and choose an $H_x$-orthonormal basis $v_i$. Extend these vectors as constant affine vector fields while differentiating; the basis is normalized only at the chosen point. Define
\[
c_{ijk}=\langle v_i\circ v_j,v_k\rangle_x,
\qquad
N=\sum_{i,j,k}c_{ijk}^2,
\]
\begin{equation}\label{eq:n-6-4}
\zeta=\sum_i v_i^2,
\qquad
A=\sum_{i,j}S_{4,x}(v_i,v_i,v_j,v_j),
\end{equation}
where \(S_{4,x}(\cdot,\cdot,\cdot,\cdot)\) is the symmetric polarization of the quartic.

The unit is $x$, so
\begin{equation}\label{eq:n-6-5}
\langle \zeta,x\rangle_x=n,
\qquad
\|x\|_x^2=m.
\end{equation}

In these coordinates,
\[
 D H[v_k]_{ij}=-2c_{ijk},\qquad
 D^2H[v_k,v_l]_{ij}=6S_{4,x}(v_i,v_j,v_k,v_l).
\]
Thus $D\log\det H[v_k]=\sum_iD H[v_k]_{ii}=-2\sum_i c_{iik}=-2\langle\zeta,v_k\rangle_x$. Since $D\log p(x)[v_k]=\langle x,v_k\rangle_x$, this gives
\begin{equation}\label{eq:n-6-6}
\nabla_{H_x}\Psi
=
-2\left(\zeta-\frac nm x\right).
\end{equation}

For its second derivative,
\[
D^2\log\det H[v,v]
=
\operatorname{tr}(H^{-1}D^2H[v,v])
-
\operatorname{tr}(H^{-1}DH[v]H^{-1}DH[v]).
\]
At the chosen point $H=I$; summing over $v=v_k$, we obtain
\[
 \sum_{i,k}6S_{4,x}(v_i,v_i,v_k,v_k)
 -\sum_{i,j,k}4c_{ijk}c_{jik}=6A-4N.
\]
The term $(2n/m)\log p$ contributes $(2n/m)\tr(H^{-1}(-H))=-2n^2/m$. Hence
\begin{equation}\label{eq:n-6-7}
\mathcal L\Psi
=
6A-4N-\frac{2n^2}{m}.
\end{equation}

The three fourth-moment pairings give $\E S_{4,x}(G_x)=3A$. They also give
\[
 \E\|G_x^2\|_x^2
 =\sum_{i,j,k,l,a}c_{ija}c_{kla}
  (\delta_{ij}\delta_{kl}+\delta_{ik}\delta_{jl}+\delta_{il}\delta_{jk})
 =\|\zeta\|_x^2+2N.
\]
Subtracting yields
\begin{equation}\label{eq:n-6-8}
\mathbb E\mathcal D_x(G_x)
=
3A-2N-\|\zeta\|_x^2.
\end{equation}
Also,
\[
\left\|\zeta-\frac nm x\right\|_x^2
=
\|\zeta\|_x^2-\frac{n^2}{m}.
\]
Substituting these expressions proves \eqref{eq:n-6-2}. For $u=e^{-\Psi/2}$ the chain rule yields
\[
 \mathcal Lu=u\left(-\frac12\mathcal L\Psi
             +\frac14\|d\Psi\|_{H^{-1}}^2\right),
\]
which is \eqref{eq:n-6-3}. \end{proof}

\subsection{The Hessian determinant equation}

Writing $n_a=\dim J_a$ and $r_a=\rank J_a$, call the Jordan product below \emph{balanced} when its weights satisfy
\begin{align}
 p&=c\prod_a\Delta_a^{k_a},\label{eq:n-6-9}\\
 \frac{k_ar_a}{n_a}&=\frac mn\qquad\text{for every }a.\label{eq:n-6-10}
\end{align}

\begin{theorem}\label{thm:n-6-2}
The following conditions are equivalent:

\begin{enumerate}
\item \(\Psi\) has a local maximum at some point of \(C\).
\item \(\Psi\) is constant on \(C\).
\item For some \(c_0>0\), $\det(-D^2\log p)=c_0p^{-2n/m}$. \item \(p\) has the balanced Jordan product form \eqref{eq:n-6-9}--\eqref{eq:n-6-10}.

\end{enumerate}
In particular, the irreducible solutions are precisely simple Euclidean Jordan determinants, up to invertible real linear transformations and positive factors.
\end{theorem}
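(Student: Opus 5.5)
The proof will run through the cycle $1\Rightarrow2\Rightarrow3\Rightarrow4\Rightarrow1$, with Proposition~\ref{thm:n-6-1} doing the analytic work and Theorem~\ref{thm:n-4-1} doing the algebraic work.

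\emph{From 1 to 2.} By Proposition~\ref{thm:n-6-1}, the function $u=e^{-\Psi/2}$ satisfies $\mathcal Lu\le0$. The operator $\mathcal L$ is a uniformly elliptic operator with analytic coefficients on compact subsets of $C$, since $H_x\succ0$ by \eqref{ineq:a-hessian-positive}. A local maximum of $\Psi$ is a local minimum of $u$. The strong minimum principle for supersolutions of such an operator (Hopf, no zeroth-order term) then makes $u$ constant on a neighbourhood. Because $\Psi$ is real analytic on the connected set $C$, it is constant on all of $C$.

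\emph{From 2 to 3.} This step is immediate: $\det H=e^{\Psi}p^{-2n/m}$.

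\emph{From 3 to 4.} Condition 3 says that $\Psi$ is constant. Then \eqref{eq:n-6-2} gives $0=\mathcal L\Psi=2\,\mathbb E\mathcal D_x(G_x)$. Since $\mathcal D_x\ge0$ is a polynomial and $G_x$ has full support, $\mathcal D_x\equiv0$, and Theorem~\ref{thm:n-4-1} yields the Jordan product form \eqref{eq:n-6-9}. For balancing, I will use \eqref{eq:n-6-6}: constancy gives $\zeta=\frac nm x$ at every $x$, in particular at the unit $e=\sum_a e_a$ of the Jordan algebra. There the multiplication \eqref{eq:n-4-1} is the Jordan product, and the metric on $J_a$ is $k_a$ times the trace form $\langle u,v\rangle=\operatorname{tr}(u\circ v)$.

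The plan is to compute $\zeta=\sum_i v_i^2$ for an orthonormal basis of $J_a$ with respect to $k_a\operatorname{tr}$. Rescaling a trace-orthonormal basis by $k_a^{-1/2}$ gives $\zeta_a=k_a^{-1}\sum_i w_i^2$. By the standard identity in a simple Euclidean Jordan algebra, $\sum_i w_i^2=\frac{n_a}{r_a}e_a$, since this element is invariant under the automorphism group and has trace $n_a$. Hence $\zeta_a=\frac{n_a}{k_ar_a}e_a$, and comparing with $\frac nm e_a$ gives \eqref{eq:n-6-10}. If I want to avoid invoking invariance under automorphisms, I will instead verify the identity by a Peirce-basis count: a diagonal idempotent $c_i$ contributes $c_i$, and an off-diagonal unit vector in $J_{ij}$ contributes $\frac12(c_i+c_j)$.

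\emph{From 4 to 1.} At the unit the same computation shows $\zeta=\frac nm e$, so $\nabla\Psi=0$ there by \eqref{eq:n-6-6}. By Theorem~\ref{thm:n-4-1} the defect vanishes at every point, so \eqref{eq:n-6-2} reduces to $\mathcal L\Psi=\frac12\|d\Psi\|^2$. I will show constancy directly using the transitive group: the linear maps $\bigoplus_a P(a_a)$ preserve $C$ and satisfy $p\circ g=\det(g)^{\cdot}$-type multiplicative rules. Since $\Psi$ changes by a constant under linear maps and is homogeneous of degree zero, it is constant along orbits of this group.

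The difficulty is that this group acts transitively on $C$ only after allowing different scalings per factor. These multiply $p$ by $\prod_a t_a^{k_ar_a}$ and $\det H$ by $\prod_a t_a^{-2n_a}$, so $\Psi$ changes by $\sum_a\bigl(-2n_a+\frac{2n}{m}k_ar_a\bigr)\log t_a$, which vanishes precisely under \eqref{eq:n-6-10}. Thus $\Psi$ is constant, which gives 2 and hence 1.

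The irreducible case follows because a single simple factor with $k=1$ is automatically balanced, since $r/n=m/n$. Irreducibility forces $s=1$ and $k_1=1$.

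I expect the main obstacle to be the computation of $\zeta$ in a simple Jordan algebra with the rescaled metric. The elliptic maximum principle in step 1 and the orbit argument in step 4 are comparatively routine.
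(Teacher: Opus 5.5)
Your proof is correct, but it is organized differently from the paper's. The paper does not use a maximum principle. At a local maximum $e$ it notes $d\Psi(e)=0$ and $D^2\Psi(e)\preceq0$, so $\mathcal L\Psi(e)\le0$, and \eqref{eq:n-6-2} forces $\mathbb E\mathcal D_e(G_e)=0$. This gives the Jordan form directly from condition~1, pointwise. The paper then computes $\det(-D^2\log\Delta)=\Delta^{-2N/r}$ by a Peirce count \eqref{eq:n-6-12}, and writes $\Psi=c_2+2\sum_a(\frac nm k_a-\frac{n_a}{r_a})\log\Delta_a$ for every Jordan product \eqref{eq:n-6-13}. From this it reads off balance (from $d\Psi(e)=0$ along block-radial directions) and the converse (all coefficients vanish).

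Your Hopf strong minimum principle plus analytic continuation is valid: the coefficients $H_x^{-1}$ are continuous and locally uniformly elliptic, there is no zeroth-order term, and $\Psi$ is real analytic on the convex set $C$. It is, however, heavier than necessary, since the second-derivative test at one point already suffices. What it buys is that constancy of $\Psi$ follows from a local maximum before any Jordan algebra is invoked. Your balance argument, reading $\zeta=\frac nm e$ off \eqref{eq:n-6-6} at the Jordan unit, is a neat alternative to the determinant computation. Your Peirce count $\sum_iw_i^2=e+\frac d2(r-1)e=\frac{n_a}{r_a}e_a$ is right.

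For $4\Rightarrow1$, tighten the orbit step. As written, ``$\Psi$ changes by a constant under linear maps, hence is constant along orbits'' is not true. What is true is that if $p\circ g=\chi(g)p$, then $\Psi(gx)-\Psi(x)=\frac{2n}{m}\log\chi(g)-2\log|\det g|$. For $g=\bigoplus_aP(a_a)$ with $a_a\in\Omega_a$, this requires the fact $\det P(a_a)=\Delta_a(a_a)^{2n_a/r_a}$. With it, the change equals $4\sum_a(\frac nm k_a-\frac{n_a}{r_a})\log\Delta_a(a_a)$, which vanishes exactly under balance. Incidentally, these maps are already transitive on $C$ (take $a_a=x_a^{1/2}$), so separate per-factor scalings are not needed.

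That determinant identity is the same fact as the paper's \eqref{eq:n-6-12}, since $H_x=P(x^{-1})$ in trace coordinates. At that step the two routes therefore use the same input. The paper's explicit formula has the advantage of covering unbalanced products as well, and it avoids any transitivity argument.
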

\begin{proof}
At a local maximum \(e\), $d\Psi(e)=0$ and $\mathcal L\Psi(e)\le0$, so the nonnegative terms in \eqref{eq:n-6-2} must vanish. In particular, $\mathbb E\mathcal D_e(G_e)=0$. Thus \(\mathcal D_e\equiv0\), and Theorem~\ref{thm:n-4-1} gives \eqref{eq:n-6-9}.

For a simple Euclidean Jordan algebra, let $r$ be its rank and $N$ its dimension. The differential identities $D\log\Delta(x)[u]=\tr(x^{-1}\circ u)$ and $D(x^{-1})[u]=-P(x^{-1})u$ identify the Hessian of $-\log\Delta$ with $P(x^{-1})$ in trace coordinates \cite[Chapters~III--IV]{FK}. Choose a Jordan frame and write $x=\sum_i t_ic_i$. On the diagonal line $\R c_i$, the operator $P(x^{-1})$ has eigenvalue $t_i^{-2}$. On the Peirce space $J_{ij}$, $L_{x^{-1}}$ has eigenvalue $(t_i^{-1}+t_j^{-1})/2$ and $L_{x^{-2}}$ has eigenvalue $(t_i^{-2}+t_j^{-2})/2$; hence $2L_{x^{-1}}^2-L_{x^{-2}}$ has eigenvalue $(t_it_j)^{-1}$. The spaces $J_{ij}$ have the common dimension $d$, so $N=r+d r(r-1)/2$ and
\begin{equation}\label{eq:n-6-12}
\det(-D^2\log\Delta(x))
=
\prod_i t_i^{-2-d(r-1)}
=
\Delta(x)^{-2N/r}.
\end{equation}

Weights contribute only positive constants. Therefore $\det H_x = c_1\prod_a\Delta_a(x_a)^{-2n_a/r_a}$, and
\begin{equation}\label{eq:n-6-13}
\Psi(x)
=
c_2+
2\sum_a
\left(\frac nm k_a-\frac{n_a}{r_a}\right)
\log\Delta_a(x_a).
\end{equation}

Differentiate \eqref{eq:n-6-13} along a radial change in just the $a$th block. Since $D\log\Delta_a(x_a)[x_a]=r_a>0$, the equality $d\Psi(e)=0$ forces the coefficient of that block to vanish. This is \eqref{eq:n-6-10} for every $a$, and then $\Psi$ is constant on $C$.

Conversely, a balanced product makes every coefficient in \eqref{eq:n-6-13} zero. The equivalence of conditions 2 and 3 is immediate. Irreducibility leaves one factor with exponent one. \end{proof}

For \(m>1\), $D^2p=p(\tau_p\tau_p^{\mathsf T}-H)$, and $\tau_p^{\mathsf T}H^{-1}\tau_p=m$. By the determinant lemma,
\begin{equation}\label{eq:n-6-14}
\det D^2p
=
(-1)^{n+1}(m-1)p^n\det H.
\end{equation}
Thus Theorem~\ref{thm:n-6-2} also classifies the complete hyperbolic solutions of $\det D^2p=c\,p^{n(m-2)/m}$. The power is interpreted positively on \(C\); a global polynomial identity imposes the corresponding integrality conditions.

Fox relates the homogeneous Hessian equation to affine-sphere level sets and constructs solutions from relative invariants \cite{Fox}. 

Finally, for $\alpha\in G(p)\setminus\{0\}$, combining \eqref{eq:n-5-6}, \eqref{eq:n-6-2}, and \(\Sigma_x=\alpha H_x\) gives
\begin{equation}\label{eq:n-6-16}
\operatorname{tr}(\Sigma_x^{-1}D^2\Psi)
=
\mathfrak D(Z_{\alpha,x})
+
\frac12\|d\Psi\|_{\Sigma_x^{-1}}^2.
\end{equation}
This identifies the normalized geometric defect with the fourth-moment defect.
\Needspace{10\baselineskip}
\section{Polynomial multiplicative Legendre transforms}\label{sec:duality}
\subsection{The irreducible classification}

\begin{theorem}\label{thm:n-7-1}
Suppose $p$ is an irreducible complete real hyperbolic polynomial, positive on its full hyperbolicity cone $C$. Then \(p_*\) is a polynomial if and only if, after an invertible real linear change of coordinates and positive rescaling, \(p\) is the determinant of a simple Euclidean Jordan algebra.
\end{theorem}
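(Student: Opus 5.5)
The plan is to reduce polynomiality of $p_*$ to the Hessian determinant equation of Theorem~\ref{thm:n-6-2}, using unique factorization in $\R[x]$ and the irreducibility of $p$. The converse direction will be a direct Jordan computation.

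\emph{Easy direction.} Suppose $p=c\Delta$ for a simple Euclidean Jordan algebra, and identify $V$ with $V^*$ by the trace form. Then $D\log\Delta(x)[u]=\tr(x^{-1}\circ u)$, so $\tau_p(x)=x^{-1}$ on the symmetric cone. Since $\Delta(x^{-1})=\Delta(x)^{-1}$, we get
\[
p_*(y)=p(\tau_p^{-1}(y))^{-1}=c^{-1}\Delta(y^{-1})^{-1}=c^{-1}\Delta(y).
\]
This is a polynomial. A linear change of coordinates on $p$ induces the contragredient change on $p_*$, and a positive rescaling of $p$ rescales $p_*$. So the property is invariant under the normalizations in the statement.

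\emph{Hard direction.} Assume $p_*$ is a polynomial.

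1. \emph{Degenerate case.} If $n=1$, $p$ is a power of a linear form, hence a rank-one Jordan determinant. If $n\ge2$, completeness excludes a linear $p$, so $m\ge2$.

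2. \emph{Determinant relation on $C$.} Differentiating \eqref{eq:n-2-18}, i.e.\ taking determinants, gives $\det H_{p_*}(\tau_p(x))\det H_x=1$. Write $R_f=\det D^2f\cdot f^{2n/m-n}$. By \eqref{eq:n-6-14}, applied to both $p$ and the degree-$m$ homogeneous $p_*$, we have $R_f=\pm(m-1)e^{\Psi_f}$. The relation above is then equivalent to $\Psi_p(x)+\Psi_{p_*}(\tau_p(x))=\text{const}$.

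3. \emph{Clearing denominators.} Use $\tau_p=\nabla p/p$, the homogeneity of $\det D^2p_*$ of degree $n(m-2)$, and $p_*(\tau_p(x))=p(x)^{-1}$. The relation of step 2 becomes
\[
\det D^2p(x)\cdot(\det D^2p_*)(\nabla p(x))=c\,p(x)^{n(m-2)}\qquad(x\in C),
\]
with $c\neq0$. Both sides are polynomials, so this is a polynomial identity on $V$. I expect this bookkeeping of exponents and signs to be the main place where care is needed.

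4. \emph{Unique factorization.} Since $p$ is irreducible in the UFD $\R[x]$, the factor $\det D^2p$ equals $c'p^j$ for some $j$. The constant $c'$ is nonzero because $\det D^2p=\pm(m-1)p^n\det H\ne0$ on $C$. Comparing degrees gives $jm=n(m-2)$, hence
\[
\det D^2p=c'p^{n(m-2)/m}.
\]

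5. \emph{Conclusion.} By \eqref{eq:n-6-14}, step 4 is exactly condition~3 of Theorem~\ref{thm:n-6-2}. That theorem gives a balanced Jordan product form. Irreducibility leaves a single simple factor with exponent one, so $p$ is a simple Euclidean Jordan determinant up to linear change and positive scaling.
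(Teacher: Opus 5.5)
Your proposal is correct and follows essentially the same route as the paper. Both arguments obtain the polynomial identity $\det D^2p\cdot(\det D^2p_*)(\nabla p)=(m-1)^2p^{n(m-2)}$, use irreducibility and unique factorization to get $\det D^2p=c_1p^{n(m-2)/m}$, invoke Theorem~\ref{thm:n-6-2}, and prove the converse by the explicit Jordan formula. The only difference is cosmetic: the paper derives the identity by differentiating $\nabla p_*(\nabla p)=p^{m-2}x$ and taking determinants, whereas you take determinants in \eqref{eq:n-2-18} and apply \eqref{eq:n-6-14} to both $p$ and $p_*$, which yields the same constant $(m-1)^2$.
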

\begin{proof}
The Legendre identities turn polynomiality of $p_*$ into a Hessian-determinant equation for $p$, which Theorem~\ref{thm:n-6-2} solves. Assume \(q=p_*\) is polynomial. Homogeneity makes \(q\) a homogeneous polynomial of degree \(m\). From the inverse-gradient identities,
\begin{equation}\label{eq:n-7-1}
q(\nabla p)=p^{m-1},
\qquad
\nabla q(\nabla p)=p^{m-2}x.
\end{equation}

Differentiate the second equation:
\[
D^2q(\nabla p)D^2p
=
p^{m-2}I
+
(m-2)p^{m-3}x(\nabla p)^{\mathsf T}.
\]
For \(m=2\), the second term is zero. Taking determinants and using
\(\langle x,\nabla p\rangle=mp\) gives
\begin{equation}\label{eq:n-7-2}
\det D^2q(\nabla p)\det D^2p
=
(m-1)^2p^{n(m-2)}.
\end{equation}

The real polynomial ring is a unique factorization domain. Since \(p\) is irreducible, \eqref{eq:n-7-2} forces
\begin{equation}\label{eq:n-7-3}
\det D^2p=c_1p^N,
\qquad
N=\frac{n(m-2)}m\in\mathbb Z_{\ge0}.
\end{equation}
The Hessian determinant is nonzero by \eqref{eq:n-6-14}, and homogeneity determines \(N\).

Equations \eqref{eq:n-6-14} and \eqref{eq:n-7-3} imply $\det H=c_2p^{-2n/m}$ with $c_2>0$. Theorem~\ref{thm:n-6-2} gives a balanced Jordan product. Irreducibility leaves one simple determinant with exponent one.

Conversely, for any Jordan product $p=c\prod_a\Delta_a^{k_a}$, standard trace coordinates give $\tau_p(x)_a=k_ax_a^{-1}$, and hence
\begin{equation}\label{eq:n-7-4}
p_*(y)
=
c^{-1}
\prod_a k_a^{-k_ar_a}\Delta_a(y_a)^{k_a}.
\end{equation}
This is polynomial. \end{proof}

\begin{remark}
The non-prehomogeneous Clifford quartics of Kogiso and Sato are absolutely irreducible \cite[Theorem~3.2(1),(3)]{KogisoSato} and have polynomial multiplicative Legendre transforms \cite[Theorem~2.14]{KogisoSato}. If one were complete hyperbolic, Theorem~\ref{thm:n-7-1} would identify it with a simple Euclidean Jordan determinant, which is a relative invariant of its structure group. Thus none of these counterexamples is complete hyperbolic.
\end{remark}

\begin{corollary}\label{thm:n-7-2}
An irreducible complete hyperbolic polynomial with polynomial multiplicative Legendre transform satisfies
\begin{equation}\label{eq:n-7-5}
m\mid2n.
\end{equation}

If an irreducible complete hyperbolic \(p\) is not a simple Jordan determinant, then \(\det D^2p\) has an irreducible factor not dividing \(p\).
\end{corollary}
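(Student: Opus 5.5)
The first assertion comes from Theorem~\ref{thm:n-7-1} together with the dimension formula for simple Euclidean Jordan algebras. By that theorem, $p$ is, up to an invertible real linear change of coordinates and a positive factor, the determinant $\Delta$ of a simple Euclidean Jordan algebra $J$ of rank $r=m$ and dimension $n=\dim J=m+dm(m-1)/2$, where $d$ is the Peirce parameter. Then $2n=2m+dm(m-1)$, and both terms are divisible by $m$. A second route is \eqref{eq:n-7-3}: polynomiality of $p_*$ forces $N=n(m-2)/m\in\mathbb Z_{\ge0}$, so $m\mid n(m-2)=nm-2n$, which gives $m\mid 2n$ directly. I would give the second derivation as the proof, since it uses only the unique-factorization step, and mention the Jordan dimension check as a consistency test.

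For the second assertion I argue by contraposition: suppose every irreducible factor of $\det D^2p$ divides $p$. The Hessian determinant is nonzero by \eqref{eq:n-6-14}, and it is homogeneous of degree $n(m-2)$. Since $p$ is irreducible, unique factorization in $\R[x]$ gives $\det D^2p=c_1p^N$ with $c_1\neq0$ a constant, and comparing degrees gives $N=n(m-2)/m$. I then substitute into \eqref{eq:n-6-14} on $C$:
\[
\det H=\frac{(-1)^{n+1}c_1}{m-1}\,p^{N-n}=c_2\,p^{-2n/m},
\]
because $N-n=-2n/m$. Positivity of $\det H$ on $C$ forces $c_2>0$. This is condition 3 of Theorem~\ref{thm:n-6-2}, so $p$ is a balanced Jordan product. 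Irreducibility then leaves a single simple factor with exponent one, i.e.\ $p$ is a simple Jordan determinant up to linear change and positive scaling, which is the contradiction needed.

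The case $m=1$ needs separate handling, since \eqref{eq:n-6-14} assumes $m>1$. Here I note that a complete irreducible $p$ of degree one forces $n=1$, where $p$ is itself a (rank-one) Jordan determinant; so under the standing assumption $n,m\ge2$ nothing more is required. Once \eqref{eq:n-6-14} and Theorem~\ref{thm:n-6-2} are in hand, the only point needing care is the sign and constant bookkeeping.
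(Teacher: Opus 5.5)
Your proposal is correct and follows essentially the paper's proof. The divisibility comes from the integrality of $N=n(m-2)/m$ in \eqref{eq:n-7-3}, and the second assertion reduces a pure power $\det D^2p=c_1p^N$, via \eqref{eq:n-6-14}, to the Hessian-determinant condition of Theorem~\ref{thm:n-6-2}, with your constant, sign and degree bookkeeping spelling out what the paper leaves implicit.
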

\begin{proof}
The divisibility follows from \eqref{eq:n-7-3}. If all irreducible factors of \(\det D^2p\) divided \(p\), it would be a pure power of \(p\), and the same Monge--Amp\`ere argument would give the Jordan classification. \end{proof}

\subsection{Powers of the Legendre transform}

\begin{proposition}\label{thm:n-7-3}
For every complete hyperbolic \(p\), if \((p_*)^k\) is polynomial for some positive integer \(k\), then \(p_*\) is already polynomial. Moreover,
\begin{equation}\label{eq:n-7-6}
(p^k)_*=k^{-km}(p_*)^k.
\end{equation}
\end{proposition}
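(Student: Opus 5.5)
Two things must be shown: the scaling identity \eqref{eq:n-7-6}, and the fact that if some power $(p_*)^k$ is a polynomial then $p_*$ already is. I will prove the identity first and then use it only for orientation.

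\emph{The identity.} The polynomial $p^k$ is complete hyperbolic with the same cone $C$. Its logarithmic gradient is $\tau_{p^k}=k\tau_p$. Given $y\in C^\vee$, the preimage $x=\tau_{p^k}^{-1}(y)$ satisfies $\tau_p(x)=y/k$, so $x=\tau_p^{-1}(y/k)$. By Proposition~\ref{thm:n-2-3}, $p_*$ is homogeneous of degree $m$. Hence
\[
(p^k)_*(y)=p\bigl(\tau_p^{-1}(y/k)\bigr)^{-k}=p_*(y/k)^k=k^{-km}p_*(y)^k .
\]
This is a two-line computation and presents no obstacle.

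\emph{The root statement.} Put $q=(p_*)^k$ and assume $q$ is a polynomial. On the open set $C^\vee$ we know $p_*>0$ and $p_*$ is real-analytic, and $p_*=q^{1/k}$ there. I want to show that a real polynomial $q$ which is a $k$th power of a real-analytic function on a nonempty open set is the $k$th power of a polynomial. Factor $q=c\prod_i f_i^{e_i}$ into distinct real irreducible factors. Each irreducible $f_i$ that is not identically signed vanishes somewhere in $C^\vee$, because $f_i$ cannot vanish on an open set. Suppose that were the case: near a smooth zero point of $f_i$ inside $C^\vee$, with the other factors nonzero there, we would have $q\sim f_i^{e_i}\cdot(\text{unit})$. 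But $q>0$ on $C^\vee$, so $q$ does not vanish there at all.

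So in fact no factor $f_i$ vanishes on $C^\vee$. Choosing signs, each $f_i$ is positive on $C^\vee$. The zero-free condition alone does not force $k\mid e_i$; I need more, and this is the main obstacle. The approach I will use is to pass to the complexification. Since $p_*$ is real-analytic on $C^\vee$, it continues analytically as $q^{1/k}$ along paths avoiding the zero set of $q$ in $\C^n$. The function $q^{1/k}$ is single-valued near a generic smooth point of $\{f_i=0\}$ only if $k\mid e_i$. So it suffices to show that $p_*$ extends to a single-valued holomorphic function on a neighbourhood of generic points of each $\{f_i=0\}$.

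\emph{Obtaining the single-valued continuation.} The tool is the explicit structure from \eqref{eq:n-7-1}-type identities: $p_*(\tau_p(x))=p(x)^{-1}$ with $\tau_p=\nabla p/p$ rational. I would express $p_*$ algebraically, as follows. The function $r=1/p_*$ is homogeneous of degree $-m$ and satisfies $r(\tau_p(x))=p(x)$. The map $\tau_p$ is a birational involution up to the inverse gradient map $\tau_{p_*}$, since $(p_*)_*=p$. Then $\nabla\log q=k\nabla\log p_*$, so $\tau_{p_*}=\tfrac1k\nabla q/q$ is rational. Therefore $x=\tau_{p_*}(y)$ is a rational function of $y$, and
\[
p_*(y)=p\bigl(\tau_{p_*}(y)\bigr)^{-1}
\]
is a rational function of $y$.

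A rational function whose $k$th power is a polynomial is a polynomial, by unique factorization. This algebraic route avoids the monodromy argument entirely, and I expect it to be the cleanest proof. The only point to check is that $\tau_{p_*}$ is indeed the inverse of $\tau_p$, which is \eqref{eq:n-2-17}.
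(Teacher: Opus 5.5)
Your final argument is correct and is essentially the paper's proof: $\tau_{p_*}=\nabla q/(kq)$ is rational, so involutivity makes $p_*=1/p(\nabla q/(kq))$ rational; unique factorization then forces its reduced denominator to be constant, and \eqref{eq:n-7-6} follows from $\tau_{p^k}=k\tau_p$ and homogeneity, exactly as in the paper. You can delete the earlier factor-sign and monodromy paragraphs: they are not needed, and the claim that a sign-changing factor must vanish somewhere in $C^\vee$ is unjustified.
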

\begin{proof}
Let \(R=(p_*)^k\) be polynomial. Then $D\log p_*=(\nabla R)/(kR)$ is rational. By involutivity, \[
p_*(y)=\frac{1}{p\bigl(\nabla R(y)/(kR(y))\bigr)}.
\]
Thus \(p_*\) is rational, and its reduced denominator must be constant because $(p_*)^k$ is polynomial.

Finally,
\[
\tau_{p^k}=k\tau_p,
\qquad
\tau_p^{-1}(y/k)=k\tau_p^{-1}(y).
\]
Substitution proves \eqref{eq:n-7-6}. \end{proof}

\subsection{Reducible polynomials with visible boundary factors}

Write $p=c\prod_i h_i^{a_i}$ with distinct real irreducible factors.

Call \(h_i\) \emph{boundary-visible} if \(h_i=0\) contains a nonempty relatively open smooth hypersurface piece of \(\partial C\) on which every other \(h_j\) is nonzero.

\begin{theorem}\label{thm:n-7-4}
Let $p$ be complete and hyperbolic on its full cone $C$, and suppose every real irreducible factor is boundary-visible. Then the following are equivalent:

\begin{enumerate}
\item \(p_*\) is polynomial.
\item \(\det D^2p\) divides a positive integer power of \(p\), up to a nonzero constant.
\item Every entry of \(H_p^{-1}\) is polynomial.
\item \(p\) is a product of positive integer powers of independent simple Jordan determinants.

\end{enumerate}
In condition 3, the entries are necessarily homogeneous quadratics.
\end{theorem}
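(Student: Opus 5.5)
The implications into condition~2 and out of condition~4 are algebraic, and I would dispose of them first. For $1\Rightarrow2$, I would reuse \eqref{eq:n-7-2} verbatim. Its derivation did not use irreducibility. The factor $\det D^2q(\nabla p)$ is a polynomial in $x$, so $\det D^2p$ divides $p^{n(m-2)}$ up to a constant; when $m=2$ both determinants are constant. For $3\Rightarrow2$, I would combine \eqref{eq:n-6-14} with $\det H\cdot\det H^{-1}=1$ to get $\det D^2p\cdot\det(H^{-1})=\pm(m-1)p^n$. Here $\det(H^{-1})$ is a polynomial, so $\det D^2p$ divides $p^n$. Homogeneity of degree $-2$ of $H$ then forces any polynomial entries of $H^{-1}$ to be homogeneous quadratics.

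For $4\Rightarrow1$, I would take \eqref{eq:n-7-4}. For $4\Rightarrow3$, I would work in trace coordinates of each block, where $-D^2\log\Delta_a^{k_a}=k_aP(x_a^{-1})$. Its inverse is $k_a^{-1}P(x_a)=k_a^{-1}(2L_{x_a}^2-L_{x_a^2})$, which is quadratic and polynomial, and $H^{-1}$ is block diagonal. Condition~3 is then preserved under linear coordinate changes. For $4\Rightarrow2$, I would use \eqref{eq:n-6-12} blockwise to obtain $\det D^2p=c\,p^n\prod_a\Delta_a^{-2n_a/r_a}$. This is a polynomial whose only irreducible factors are the $\Delta_a$, so it divides a power of $p$.

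The substance is $2\Rightarrow4$. Assume $\det D^2p=c\prod_ih_i^{b_i}$ with integers $b_i\ge0$. By \eqref{eq:n-6-14},
\[
 \Psi=\log\det H+\frac{2n}{m}\log p=c'+\sum_i\gamma_i\log h_i,
 \qquad
 \gamma_i=b_i-na_i+\frac{2n}{m}a_i .
\]
Degree-zero homogeneity of $\Psi$ gives $\sum_i\gamma_i\deg h_i=0$. The goal is to show that every $\gamma_i=0$. Then $\Psi$ is constant, Theorem~\ref{thm:n-6-2} yields the balanced Jordan product, and condition~4 follows.

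I expect the main obstacle to be pinning down the individual $\gamma_i$, and this is exactly where boundary visibility enters. Fix a smooth piece $\Gamma_i\subset\partial C$ on which only $h_i$ vanishes, and approach a point $x_0\in\Gamma_i$ transversally. Near $x_0$ we have $-\log p=-a_i\log h_i+g$ with $g$ smooth. Hence
\[
 H=a_i\,\frac{\nabla h_i\nabla h_i^{\mathsf T}}{h_i^2}-a_i\,\frac{D^2h_i}{h_i}+D^2g .
\]
I would compute the leading order of $\det H$ in $h_i$ using the block structure: the normal direction, the radial direction (along which $h_i$ is homogeneous, so $D^2h_i$ degenerates), and the tangential directions. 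The spectral bounds \eqref{eq:n-2-15}, applied at base points near $x_0$ where exactly $a_i$ characteristic roots tend to zero, would give matching two-sided estimates. The target is $\det H\asymp h_i^{-e_i}$ for an exponent $e_i$ determined by $a_i$ and the rank of the second fundamental form of $\Gamma_i$. That pins $\gamma_i=-e_i+\tfrac{2n}{m}a_i$.

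If exact asymptotics prove unwieldy, I would fall back on Proposition~\ref{thm:n-6-1}: $u=e^{-\Psi/2}=C\prod_ih_i^{-\gamma_i/2}$ satisfies $\mathcal Lu\le0$. I would evaluate $\mathcal Lu/u$ explicitly near $\Gamma_i$. Its leading singular term is a multiple of $\gamma_i(\gamma_i+\text{const})\,\|dh_i\|_{H^{-1}}^2/h_i^2$, and the requirement that this be nonpositive for every visible factor would force the sign of each $\gamma_i$. Combined with $\sum_i\gamma_i\deg h_i=0$, this forces $\gamma_i=0$ for all $i$. If a sign constraint comes out only one-sided, I would use it to show that $\Psi$ is bounded above near every visible piece and tends to $-\infty$ there. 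The supremum of the degree-zero function $\Psi$ over the compact base would then be attained in $C$, giving condition~1 of Theorem~\ref{thm:n-6-2} directly.
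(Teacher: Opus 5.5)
\textbf{The step $2\Rightarrow4$ aims at a false target.} Your implications $1\Rightarrow2$, $3\Rightarrow2$ and $4\Rightarrow1,2,3$ are fine.

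Showing that every $\gamma_i=0$ means that $\Psi$ is constant. By Theorem~\ref{thm:n-6-2} that is equivalent to the \emph{balanced} Jordan form, which is strictly stronger than condition~4. Condition~2 also holds for unbalanced products. Take $p=x_1^2x_2$ on the open positive quadrant, with $n=2$ and $m=3$:
\begin{itemize}
\item $p$ is complete, and both factors are boundary-visible;
\item $p_*(y)=y_1^2y_2/4$;
\item $\det D^2p=-4x_1^2$;
\item $H_p^{-1}=\operatorname{diag}(x_1^2/2,x_2^2)$.
\end{itemize}
So all four conditions hold. Yet $\Psi=\log2+\tfrac23\log(x_1/x_2)$, so $\gamma_1=\tfrac23$ and $\gamma_2=-\tfrac23$.

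Your boundary asymptotics can at best reproduce these nonzero values, since $\gamma_i=-e_i+2na_i/m$ is itself a balance condition. Your fallback fails on the same example:
\begin{itemize}
\item $\mathcal D\equiv0$ here, so $u=e^{-\Psi/2}$ is exactly $\mathcal L$-harmonic and gives no sign information.
\item $\Psi\to+\infty$ as $x_2\to0$, so it has no interior maximum.
\item The ``leading singular term'' $\|dh_i\|^2_{H^{-1}}/h_i^2$ is bounded; it equals $1/2$ for $h_1=x_1$. This is because $H^{-1}$ degenerates quadratically in the normal direction.
\end{itemize}

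\textbf{The missing idea.} You need the weight-free criterion $\mathcal D\equiv0$ of Theorem~\ref{thm:n-4-1}, not constancy of $\Psi$. Boundary visibility should be used to exclude poles of $H^{-1}$, not to pin exponents. The paper reorganizes the argument as $2\Rightarrow3\Rightarrow4$.

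For $2\Rightarrow3$ it writes
\[
H_x^{-1}=\frac{xx^{\mathsf T}}{m-1}-p(x)\bigl(D^2p(x)\bigr)^{-1}.
\]
By the cofactor formula and condition~2, every reduced denominator of an entry is a product of factors of $p$. Suppose $h_i$ occurred. Pick a generic point $x_0$ of its visible patch, where the numerator and the other factors are nonzero; a resultant argument supplies such a point. Along $x_0+te$, $t\downarrow0$, the entry would then be unbounded. This contradicts $H_x^{-1}\preceq\lambda_1^e(x)^2H_e^{-1}$ from \eqref{eq:n-2-15}. Homogeneity then makes the entries quadratic.

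For $3\Rightarrow4$, differentiate $H^{-1}\tau_p=x$ three times in a direction $v$. The third derivative of the quadratic $H^{-1}$ vanishes. At a point normalized so that $H=I$, this gives $12(L_v^2-T_v)v=0$. Hence $\|v^2\|^2=S_4(v)$, so $\mathcal D\equiv0$, and Theorem~\ref{thm:n-4-1} yields the Jordan product with arbitrary positive integer weights.
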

\begin{proof}
We prove $1\Rightarrow2\Rightarrow3\Rightarrow4\Rightarrow1$; boundary visibility is used only in $2\Rightarrow3$. Equation \eqref{eq:n-7-2} proves \(1\Rightarrow2\).

Let \(\mathsf M_x=H_x^{-1}\). The identity
\begin{equation}\label{eq:n-7-7}
\mathsf M_x=\frac{xx^{\mathsf T}}{m-1}
-p(x)(D^2p(x))^{-1}
\end{equation}
follows by multiplication by \(H_x\), using
$D^2p(x)x=(m-1)\nabla p(x)$.

Under condition 2, each reduced denominator has only irreducible factors of $p$. Suppose a reduced denominator contains $h_i$. Its smooth boundary patch contains a point $x_0$ where the numerator and all other denominator factors are nonzero. Indeed, locally write the patch as the graph of a smooth function over an open subset of $\R^{n-1}$. If a polynomial coprime to $h_i$ vanished on an open part of this graph, its nonzero resultant with $h_i$ over $\R(x_1,\ldots,x_{n-1})$ would vanish on an open set, a contradiction. Apply this to the product of the numerator and the other denominator factors.

The points $x_0+te$ belong to $C$ for $t>0$, and $h_i(x_0+te)$ tends to zero while the reduced numerator stays nonzero. The entry would therefore be unbounded as $t\downarrow0$. On the other hand, \eqref{eq:n-2-15} bounds the positive matrix $\mathsf M_x$ by $\lambda_1^e(x)^2H_e^{-1}$; continuity of the root bounds its norm on bounded subsets of $C$. This contradiction removes every denominator factor.

Homogeneity, $\mathsf M_{tx}=t^2\mathsf M_x$, then makes each entry quadratic. Thus \(2\Rightarrow3\).

To prove \(3\Rightarrow4\), differentiate in a fixed direction \(v\), using primes. From $\mathsf M\tau_p=x$ and $\tau_p'=-Hv$, one obtains $\mathsf M'\tau_p=2v$, $\mathsf M''\tau_p=\mathsf M'Hv$,
\begin{equation}\label{eq:n-7-8}
\mathsf M'''\tau_p=2\mathsf M''Hv+\mathsf M'H'v.
\end{equation}
The left-hand side is zero because $\mathsf M$ is quadratic. At the current point choose coordinates with $H=I$. Let \(L_vw=v\circ w\), and let \(T_v\) be the operator corresponding to the bilinear form $S_4(v,v,\cdot,\cdot)$. Then $H'=-2L_v$ and $H''=6T_v$, $\mathsf M'=2L_v$ and $\mathsf M''=8L_v^2-6T_v$.
These formulas follow from $(H^{-1})'=-H^{-1}H'H^{-1}$ and $(H^{-1})''=2H^{-1}H'H^{-1}H'H^{-1}-H^{-1}H''H^{-1}$. Substituting into \eqref{eq:n-7-8}, we obtain
\[
 0=2(8L_v^2-6T_v)v+(2L_v)(-2L_v)v,
\]
that is, $12(L_v^2-T_v)v=0$. Pairing with \(v\), $\|v^2\|^2=S_4(v)$. Thus \(\mathcal D_x\equiv0\), and Theorem~\ref{thm:n-4-1} proves condition 4.

Finally, \eqref{eq:n-7-4} proves \(4\Rightarrow1\). \end{proof}

\subsection{Dual defect reversal}
The conclusion that a hyperbolic barrier has a hyperbolic dual only in the self-scaled case is recorded in \cite[Section~3.4]{NT}; together with \cite{HauserGuler}, it gives the Jordan classification in the last sentence of the next proposition. The defect identity gives a direct proof in the present notation.

The derivative definition \eqref{eq:n-4-6} makes sense for the positive analytic function \(p_*\), even if it is not polynomial.

\begin{proposition}\label{thm:n-7-5}
For \(y=\tau_p(x)\) and \(w=H_xv\),
\begin{equation}\label{eq:n-7-9}
\mathcal D^{p_*}_y(w)=-\mathcal D^p_x(v).
\end{equation}

Consequently, \(p_*\) is a polynomial hyperbolic with respect to every point of \(C^\vee\) if and only if \(p\) is a Jordan product of the form \eqref{eq:n-4-8}.
\end{proposition}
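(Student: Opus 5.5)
The plan is to prove \eqref{eq:n-7-9} by differentiating the Legendre identities of Proposition~\ref{thm:n-2-3}, and then to get the classification from the nonnegativity in Theorem~\ref{thm:n-4-1}.

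\emph{Derivatives of the dual potential.} Write $F=-\log p$ and $F_*=-\log p_*$, so that $H_x=D^2F(x)$ and, by \eqref{eq:n-2-18}, $H^*_y:=D^2F_*(y)=H_x^{-1}$ at $y=\tau_p(x)$. Since $D\tau_p=-H_x$, moving $x$ in the direction $v$ moves $y$ in the direction $-H_xv=-w$.
\begin{enumerate}
\item Differentiate $H^*(\tau_p(x))=H_x^{-1}$ once in the direction $v$. This expresses $D^3F_*(y)[w,\cdot,\cdot]$ as $\pm H_x^{-1}D^3F(x)[v,\cdot,\cdot]H_x^{-1}$.
\item Differentiate a second time, using $(H^{-1})''=2H^{-1}H'H^{-1}H'H^{-1}-H^{-1}H''H^{-1}$ and the chain rule for the moving point $y$. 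This expresses $D^4F_*(y)[w^4]$ as $-D^4F(x)[v^4]$ plus a quadratic correction in the cubic tensor. That correction is a multiple of $\|H_x^{-1}D^3F(x)[v,v,\cdot]\|_{H_x}^2$.
\item Note that the map $v\mapsto w=H_xv$ is an isometry from the $H_x$ metric to the $H^*_y$ metric, so squared norms of contracted cubic tensors agree on the two sides.
\end{enumerate}
Substituting into the derivative form \eqref{eq:n-4-6} of the defect, applied to $p_*$ at $y$, the quartic terms enter with opposite signs. The claim is that the cubic-square correction from step 2, namely (a multiple of) $\|C[v,v,\cdot]\|^2$, combines with the $-\tfrac14\|C_*[w,w,\cdot]\|^2$ term to give $+\tfrac14\|C[v,v,\cdot]\|^2$. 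The result is $\mathcal D^{p_*}_y(w)=-\mathcal D^p_x(v)$.

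An equivalent and cleaner route is through the covariant tensor \eqref{eq:n-4-36}. The Hessian metrics of $F$ and $F_*$ correspond under $\tau_p$, so $\widehat\nabla D^3F_*$ should pull back to $-\widehat\nabla D^3F$. Then \eqref{eq:n-4-37} gives the identity on the diagonal. I would use this route to cross-check the coefficients.

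\emph{Consequence.} Suppose $p_*$ is a polynomial hyperbolic with respect to every point of $C^\vee$. The nonnegativity part of Theorem~\ref{thm:n-4-1} rests only on Lemma~\ref{thm:n-2-1} at a hyperbolicity point, so it does not need completeness. It gives $\mathcal D^{p_*}_y\ge0$ at every $y\in C^\vee$, and also $\mathcal D^p_x\ge0$. By \eqref{eq:n-7-9} both defects therefore vanish identically at some $x$, and Theorem~\ref{thm:n-4-1} gives the Jordan product form \eqref{eq:n-4-8}.

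Conversely, for a Jordan product, \eqref{eq:n-7-4} exhibits $p_*$ as a positive multiple of the same Jordan product in trace coordinates. The product of symmetric cones is self-dual, so it equals $C^\vee$ in those coordinates, and $p_*$ is hyperbolic with respect to each of its points.

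The main obstacle is the sign and coefficient bookkeeping in step 2. The extra chain-rule term from the moving point $y$, together with the sign coming from $D\tau_p=-H$, must combine so that the cubic-square term flips exactly. I would first verify this in $H_x=I$ coordinates, and then check it on the one-dimensional example $p=x^m$, where both defects vanish.
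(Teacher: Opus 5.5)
Your proposal is correct and is essentially the paper's proof. The paper takes exactly your ``cleaner route'': $\tau_p$ is an isometry of the two Hessian metrics with $\tau_p^*D^3F_*=-D^3F$, so the Levi--Civita derivatives of the cubic tensors correspond up to a sign, and \eqref{eq:n-4-37} then gives the identity. It obtains the classification, as you do, from nonnegativity of both defects, Theorem~\ref{thm:n-4-1}, and \eqref{eq:n-7-4}. Your direct coordinate computation also closes: the second differentiation gives $D^4F_*(y)[w^4]=-D^4F(x)[v^4]+3\|H_x^{-1}D^3F(x)[v,v,\cdot]\|_{H_x}^2$, and $\tfrac16\cdot3-\tfrac14=\tfrac14$ produces exactly the sign flip.
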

\begin{proof}
The logarithmic gradient is an isometry of the two Hessian metrics that reverses the cubic form. We use this to compare their covariant derivatives, whose diagonal values are six times the respective defects. Let $F=-\log p$ and $G=-\log p_*$. For the usual convex Legendre transform \(F^*\), $G(y)=F^*(-y)+m$. The map \(\tau_p=-DF\) has differential \(-H_x\), and $D^2G(\tau_p(x))=H_x^{-1}$. Thus it is an isometry of the Hessian metrics.

Differentiating the inverse-Hessian identity gives $\tau_p^*(D^3G)=-D^3F$. An isometry preserves the Levi--Civita connection, so $\tau_p^*(\widehat\nabla^G D^3G) = -\widehat\nabla^F D^3F$. Evaluate four times on \(v\), and use \eqref{eq:n-4-37}. The minus sign in
\(D\tau_p(v)=-H_xv\) disappears in degree four, proving \eqref{eq:n-7-9}.

If \(p_*\) is also hyperbolic, both defects are nonnegative. Equation \eqref{eq:n-7-9} forces both to vanish. Apply Theorem~\ref{thm:n-4-1}. The converse follows from \eqref{eq:n-7-4}. \end{proof}

\section{Asymptotic profiles of the Riesz kernels}\label{sec:profiles}
\subsection{The Legendre transform in the saddle-point formula}

For \(y\in C^\vee\), put $x(y)=\tau_p^{-1}(y)$. At \(\alpha y\), the saddle of the inverse Laplace integral is \(x(y)\), independently of \(\alpha\). Euler's identity gives $\langle x(y),y\rangle=m$. Thus Theorem~\ref{thm:main} becomes
\begin{equation}\label{eq:n-8-1}
q_\alpha(\alpha y)
=
\frac{e^{m\alpha}}{(2\pi\alpha)^{n/2}}
\frac{p_*(y)^\alpha}{\sqrt{\det H_{x(y)}}}
(1+\varepsilon_\alpha(y)),
\end{equation}
where
$|\varepsilon_\alpha(y)|\le512n^2/\alpha$.

In particular,
\begin{equation}\label{eq:n-8-2}
\lim_{\alpha\to\infty}
\left(
\frac{q_\alpha(\alpha y)}{q_\alpha(\alpha y_0)}
\right)^{1/\alpha}
=
\frac{p_*(y)}{p_*(y_0)}.
\end{equation}

The exponential profile is thus the multiplicative Legendre transform. The next result uses the prefactor to distinguish the balanced Jordan products.

\subsection{Common-power asymptotics}

\begin{theorem}\label{thm:n-8-1}
The following conditions are equivalent:

\begin{enumerate}
\item There exist a fixed positive function \(\Phi:C^\vee\to(0,\infty)\), a sequence \(\alpha_j\to\infty\), and constants \(b_j,s_j>0\), such that for every fixed \(y\in C^\vee\),
\begin{equation}\label{eq:n-8-3}
\frac{q_{\alpha_j}(\alpha_jy)}
{b_j\Phi(y)^{s_j}}
\longrightarrow1.
\end{equation}
\item \(p\) is a balanced Jordan product.
\item \(p_*\) is polynomial and, for all sufficiently large real \(\alpha\),
\begin{equation}\label{eq:n-8-4}
q_\alpha(y)=c_\alpha p_*(y)^{\alpha-n/m}
\qquad(y\in C^\vee)
\end{equation}
for some \(c_\alpha>0\).

\end{enumerate}
Under condition 1, necessarily
\begin{equation}\label{eq:n-8-5}
\Phi=c_0p_*^a,
\qquad
s_j=\frac{\alpha_j-n/m}{a}+o(1)
\end{equation}
for some \(a,c_0>0\).
\end{theorem}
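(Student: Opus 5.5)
We prove $1\Rightarrow2\Rightarrow3\Rightarrow1$ and read off \eqref{eq:n-8-5} along the way. The main input is the saddle-point form \eqref{eq:n-8-1} of Theorem~\ref{thm:main}. For the implication $1\Rightarrow2$, fix a base point $y_0\in C^\vee$. Dividing \eqref{eq:n-8-1} at $y$ and at $y_0$ eliminates $b_j$ and the factor $e^{m\alpha}(2\pi\alpha)^{-n/2}$. Taking logarithms, condition 1 gives, for every fixed $y$,
\[
 s_j\,f(y)=\alpha_j\,g(y)+\tfrac12 h(y)+o(1),
\]
where
\[
 f=\log\frac{\Phi(y)}{\Phi(y_0)},\qquad g=\log\frac{p_*(y)}{p_*(y_0)},\qquad h=\log\frac{\det H_{x(y_0)}}{\det H_{x(y)}}.
\]
The $o(1)$ collects the errors $\log(1+\varepsilon_{\alpha_j})=O(1/\alpha_j)$ and the assumed relative convergence.

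We first determine $\Phi$. Take $y=2y_0$. Then $g=m\log2\neq0$ by homogeneity of $p_*$ (Proposition~\ref{thm:n-2-3}). Dividing the display by $\alpha_j$ shows that $s_j/\alpha_j$ converges to some $\sigma$. We have $\sigma>0$ because $s_j>0$ and $g(2y_0)>0$ forces $\sigma f(2y_0)=g(2y_0)$. Dividing by $\alpha_j$ at a general $y$ then gives $f=g/\sigma$. Hence $\Phi=c_0p_*^a$ with $a=1/\sigma$.

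We next determine $s_j$. Substituting $\Phi=c_0p_*^a$ back yields
\[
 \Bigl(\frac{s_j}{a}-\alpha_j\Bigr)g(y)=\tfrac12h(y)+o(1).
\]
At $y=2y_0$ we have $x(2y_0)=x(y_0)/2$, so $H$ scales by $4$ and $h=-2n\log2$. Therefore $s_j/a-\alpha_j\to-n/m$, which is \eqref{eq:n-8-5}. Feeding this limit into the display at arbitrary $y$ gives
\[
 \det H_{x(y)}\propto p_*(y)^{2n/m}=p(x(y))^{-2n/m}.
\]
Since $\tau_p$ is onto $C^\vee$, this is condition~3 of Theorem~\ref{thm:n-6-2}, and that theorem yields the balanced Jordan form.

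For $2\Rightarrow3$, polynomiality of $p_*$ is \eqref{eq:n-7-4}. For the kernel, use the cone gamma integral on each simple symmetric cone $\Omega_a$: for $\lambda>(r_a-1)d_a/2$ one has
\[
 \Delta_a(x_a)^{-\lambda}=\Gamma_{\Omega_a}(\lambda)^{-1}\int_{\Omega_a}e^{-\langle x_a,y_a\rangle}\Delta_a(y_a)^{\lambda-n_a/r_a}\,dy_a
\]
in trace coordinates (Faraut--Kor\'anyi, Gindikin). Apply this with $\lambda=k_a\alpha$, which is admissible for all large $\alpha$, and multiply over the blocks. The Laplace transform is injective, so $q_\alpha$ is a constant times $\prod_a\Delta_a(y_a)^{k_a\alpha-n_a/r_a}$, after absorbing the linear change of coordinates into the constant. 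The balance condition \eqref{eq:n-6-10} rewrites each exponent as $k_a(\alpha-n/m)$. Comparing with \eqref{eq:n-7-4}, which gives $p_*\propto\prod_a\Delta_a^{k_a}$, yields \eqref{eq:n-8-4}.

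For $3\Rightarrow1$, note that $p_*$ has degree $m$. Hence \eqref{eq:n-8-4} gives
\[
 q_\alpha(\alpha y)=c_\alpha\,\alpha^{m\alpha-n}\,p_*(y)^{\alpha-n/m}.
\]
Taking $\Phi=p_*$, $s_j=\alpha_j-n/m$ and $b_j=c_{\alpha_j}\alpha_j^{m\alpha_j-n}$ makes the ratio in \eqref{eq:n-8-3} identically $1$.

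The delicate step is in $1\Rightarrow2$: the only available information is convergence at fixed points, so the argument must extract that $s_j/\alpha_j$ converges, and then the subleading limit $s_j/a-\alpha_j\to-n/m$. This works only because the prefactor $\sqrt{\det H}$ in \eqref{eq:n-8-1} is controlled with relative error $O(1/\alpha)$. The dilation $y=2y_0$ is what pins down both limits without any uniformity in $y$. In $2\Rightarrow3$ the remaining care is bookkeeping of the normalizations: trace versus Hessian coordinates, and the factor $k_a$ in $\tau_p$. These affect only the constants $c_\alpha$.
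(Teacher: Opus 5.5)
Your proof is correct and follows essentially the paper's route. You compare the saddle-point formula \eqref{eq:n-8-1} at $y$ and $y_0$ to obtain $\Phi=c_0p_*^a$ and the asymptotics of $s_j$; at the dilation $y=2y_0$ you use the prefactor's scaling, where the paper uses exact kernel homogeneity $q_\alpha(2\alpha y_0)/q_\alpha(\alpha y_0)=2^{m\alpha-n}$, to the same effect. You then reduce to constancy of $p_*^{n/m}/\sqrt{\det H_{x(y)}}$ and apply Theorem~\ref{thm:n-6-2}, with the converse from the cone gamma integral and homogeneity.

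One slip: since $f=ag$, the display after substituting $\Phi=c_0p_*^a$ should read $(as_j-\alpha_j)g(y)=\tfrac12h(y)+o(1)$. It is $as_j-\alpha_j\to-n/m$, not $s_j/a-\alpha_j\to-n/m$, that yields \eqref{eq:n-8-5}.
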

\begin{proof}
Equation~\eqref{eq:n-8-2} determines the ratios of $p_*$ from the leading exponential asymptotics. A common power profile must therefore be a power of $p_*$, and the prefactor in \eqref{eq:n-8-1} yields the Hessian-determinant equation. Assume condition 1 and fix \(y_0\in C^\vee\). By kernel homogeneity,
\[
\frac{q_{\alpha_j}(2\alpha_jy_0)}
{q_{\alpha_j}(\alpha_jy_0)}
=
2^{m\alpha_j-n}.
\]
Let $d=\log(\Phi(2y_0)/\Phi(y_0))$. The relative limits imply $s_jd=(m\alpha_j-n)\log2+o(1)$. Since \(s_j>0\), we must have \(d>0\). Set $a=d/(m\log2)$. Then the second formula in \eqref{eq:n-8-5} follows.

Compare \eqref{eq:n-8-3} at \(y\) and \(y_0\), take \(\alpha_j\)-th roots, and use \eqref{eq:n-8-2}. We obtain
\[
\frac{\Phi(y)}{\Phi(y_0)}
=
\left(\frac{p_*(y)}{p_*(y_0)}\right)^a.
\]
Thus $\Phi=c_0p_*^a$, and absorbing constants into $b_j$ reduces condition 1 to
\begin{equation}\label{eq:n-8-7}
\frac{q_{\alpha_j}(\alpha_jy)}
{\widetilde b_jp_*(y)^{\alpha_j-n/m}}
\longrightarrow1.
\end{equation}
The $o(1)$ correction in the exponent contributes $1+o(1)$ at each fixed $y$. Comparing two points in \eqref{eq:n-8-1}, now at the level of relative error, shows that \eqref{eq:n-8-7} forces the function
\[
\mathcal A_p(y)=\frac{p_*(y)^{n/m}}{\sqrt{\det H_{x(y)}}}
\]
to be constant.

Since \(p_*(y)=p(x(y))^{-1}\), this is exactly $\det H_x=\text{constant}\cdot p(x)^{-2n/m}$. Theorem~\ref{thm:n-6-2} proves condition 2.

Conversely, suppose $p=c\prod_a\Delta_a^{k_a}$ is balanced. The classical cone Gamma integral gives, for sufficiently large \(\alpha\),
\begin{equation}\label{eq:n-8-9}
q_\alpha(y)
=
\frac{c^{-\alpha}}
{\prod_a\Gamma_{\Omega_a}(k_a\alpha)}
\prod_a
\Delta_a(y_a)^{k_a\alpha-n_a/r_a}.
\end{equation}
This is the cone gamma integral in trace-normalized Lebesgue measure \cite[Chapter~VII]{FK}. 

Balance says $n_a/r_a=k_an/m$. Together with \eqref{eq:n-7-4}, this turns \eqref{eq:n-8-9} into \eqref{eq:n-8-4}. In trace-normalized coordinates,
\[
c_\alpha
=
\frac{
c^{-n/m}
\prod_a k_a^{k_ar_a(\alpha-n/m)}
}{
\prod_a\Gamma_{\Omega_a}(k_a\alpha)
}.
\]
A change of coordinates affects only this constant. Condition 3 implies condition 1 by homogeneity, with zero error. \end{proof}

\subsection{The first correction term}

\begin{proposition}\label{thm:n-8-2}
Fix \(e\in C\) and put $y_\alpha=\alpha\tau_p(e)$. Let \(N,\zeta\) be defined as in \eqref{eq:n-6-4}, and let $\mu_e=\mathbb E\mathcal D_e(G_e)$. Then
\begin{equation}\label{eq:n-8-10}
\frac{q_\alpha(y_\alpha)}{Q_\alpha(y_\alpha)}
=
1+\frac{a_1(e)}{\alpha}
+O_{p,e}(\alpha^{-2}),
\end{equation}
where
\begin{equation}\label{eq:n-8-11}
a_1(e)=\frac14\mu_e+\frac16N-\frac14\|\zeta\|_e^2.
\end{equation}

Consequently, $a_1(e)\ge\frac16N-\frac14\|\zeta\|_e^2$, and equality at a single point characterizes the Jordan product form.
\end{proposition}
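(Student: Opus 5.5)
The plan is to carry the saddle-point computation of Section~\ref{sec:proof} one order further, at the special point $y_\alpha=\alpha\tau_p(e)$. By \eqref{eq:a-saddle} the saddle of $y_\alpha$ is $e$ for every $\alpha$, so $H_p=H_e$ and the normalized integral $s=q_\alpha(y_\alpha)/Q_\alpha(y_\alpha)=(2\pi)^{-n/2}\int W\,dz$ of \eqref{eq:a-normalized-integral} has a fixed root structure $b_j(z)$. The whole $\alpha$-dependence then sits in the explicit exponents of \eqref{eq:a-W}.

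\textbf{Expansion of the exponent.} For $r=\|z\|<\sqrt\alpha$ every $|b_j|\le r<\sqrt\alpha$, so the logarithmic series converges termwise. Summing over $j$ gives
\[
\log W(z)=-\frac{r^2}{2}+\sum_{k\ge3}\frac{(-1)^{k}\,(-i)^k}{k}\,\alpha^{1-k/2}S_{k,e}(H_e^{-1/2}z),
\]
with the sign convention fixed by the exponents of \eqref{eq:a-W}. Only power sums occur, and each is a homogeneous polynomial by \eqref{eq:n-2-4}, even though the individual $b_j$ are not smooth. Explicitly,
\[
\log W=-\frac{r^2}{2}+\frac{iP(z)}{3\sqrt\alpha}+\frac{S_4(z)}{4\alpha}+\frac{E_5(z)}{\alpha^{3/2}}+\rho(z),
\]
where $E_5$ is an odd quintic and $|\rho|\le C r^6/\alpha^2$ for $r\le\sqrt\alpha/2$. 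Exponentiating and expanding,
\[
W=e^{-r^2/2}\Bigl(1+\frac{iP}{3\sqrt\alpha}+\frac{1}{\alpha}\Bigl(\frac{S_4}{4}-\frac{P^2}{18}\Bigr)+\frac{O_1}{\alpha^{3/2}}+\rho'\Bigr).
\]
Here $O_1$ is an odd polynomial and $\rho'$ is bounded by a polynomial in $r$ times $\alpha^{-2}$ on the inner region. The odd terms integrate to zero against the Gaussian, consistent with $W(-z)=\overline{W(z)}$. This is exactly why the error is $O(\alpha^{-2})$ rather than $O(\alpha^{-3/2})$.

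\textbf{Gaussian moments.} In $H_e$-orthonormal coordinates $z$ is $G_e$, $S_4$ is the quartic $S_{4,e}$, and $P(z)=\langle z^2,z\rangle_e=\sum c_{ijk}z_iz_jz_k$ with the tensor $c$ of \eqref{eq:n-6-4}. The pairing formula in the proof of Lemma~\ref{lem:cubic} gives $\mathbb EP^2=6N+9\|\zeta\|_e^2$, and the proof of Proposition~\ref{thm:n-6-1} gives $\mathbb ES_{4,e}(G_e)=3A$. Hence
\[
a_1(e)=\frac{3A}{4}-\frac{N}{3}-\frac{\|\zeta\|_e^2}{2}.
\]
Substituting $3A=\mu_e+2N+\|\zeta\|_e^2$ from \eqref{eq:n-6-8} yields \eqref{eq:n-8-11}. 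The inequality follows from $\mu_e\ge0$, which holds because $\mathcal D_e\ge0$ by Theorem~\ref{thm:n-4-1}. Equality forces $\mu_e=0$. A nonnegative polynomial with zero integral against a full-support Gaussian vanishes identically, so $\mathcal D_e\equiv0$, and Theorem~\ref{thm:n-4-1} gives the Jordan product form. Conversely, the Jordan form gives $\mathcal D_e\equiv0$ at every point, hence $\mu_e=0$ and equality.

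\textbf{Main obstacle.} The main obstacle is making the $O_{p,e}(\alpha^{-2})$ remainder rigorous uniformly over the whole contour. I would split $\mathbb R^n$ at $r=\alpha^{1/8}$, or at $r=\sqrt\alpha/2$ together with a middle zone.
\begin{itemize}
\item On the inner region, bound $|W-e^{-r^2/2}(\text{polynomial})|$ by $e^{-r^2/2}\,\mathrm{poly}(r)\,\alpha^{-2}$. Use $|e^{w}-\sum_{k\le3}w^k/k!|\le|w|^4e^{|\operatorname{Re}w|}$, where $w$ collects the correction terms, and note $\operatorname{Re}w=S_4/(4\alpha)+O(r^6/\alpha^2)\le r^2/8$ once $r\le\alpha^{1/8}$.
\item On the outer region, both $|W|\le M\le K_\alpha$ from \eqref{ineq:a-modulus} and the Gaussian polynomial terms are super-polynomially small in $\alpha$. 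For $K_\alpha$ this holds because $(1+r^2/\alpha)^{-\alpha/2}\le e^{-r^2/4}$ for $r\le\sqrt\alpha$, while for larger $r$ the radial moments \eqref{eq:a-radial-moment} give decay.
\end{itemize}
The constants depend on $p$ and $e$ only through polynomial bounds on the $S_k$, which are uniform since $|S_k(z)|\le r^k$ by \eqref{eq:a-root-norm}.
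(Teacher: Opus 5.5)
Your proposal is correct and follows essentially the same route as the paper: a second-order expansion of the normalized integrand at the fixed saddle $e$, the Wick moments $\mathbb E S_4=3A$ and $\mathbb E P^2=6N+9\|\zeta\|_e^2$, the identity $\mu_e=3A-2N-\|\zeta\|_e^2$, and control of the remainder on three radial zones. The differences are cosmetic: the paper expands modulus and phase separately and cuts at $\alpha^{1/20}$ and $\sqrt\alpha$, while you expand the complex logarithm, cut at $\alpha^{1/8}$, and bound the far tail by radial moments; also, the coefficient in your general series should be $(-i)^k/k$ rather than $(-1)^k(-i)^k/k=i^k/k$, which flips the odd terms relative to your explicit display, but this changes nothing since only $P^2$ and parity enter.
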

\begin{proof}
We expand the normalized integrand to order $1/\alpha$, integrate the resulting polynomial against the Gaussian, and control the remainder on three radial regions. Use the $H_e$-orthonormal coordinates of \eqref{eq:a-normalization}. In this proof put $b_j(z)=\lambda_j^e(H_e^{-1/2}z)$ and $S_k(z)=\sum_jb_j(z)^k$, and write $W_\alpha$ for the integrand in \eqref{eq:a-W}. Expanding its logarithmic modulus and the cosine of its phase, we obtain
\begin{equation}\label{eq:n-8-13}
\operatorname{Re}W_\alpha(z)
=
e^{-\|z\|^2/2}
\left[
1+\frac1\alpha
\left(\frac{S_4(z)}4-\frac{S_3(z)^2}{18}\right)
\right]
+\text{remainder}.
\end{equation}

To estimate the integrated remainder, put $\rho=\|z\|$. Then $\sum_jb_j(z)^2=\rho^2$ and $\sum_j|b_j(z)|^k\le\rho^k$ for $k\ge2$. Write the normalized integrand from \eqref{eq:a-W} as
\[
 W_\alpha=e^{-\rho^2/2+R_\alpha+i\phi},\qquad
 r_\alpha=\frac{S_4(z)}{4\alpha},\qquad
 \vartheta_\alpha=\frac{S_3(z)}{3\sqrt\alpha}.
\]
On $\rho\le\alpha^{1/20}$ and for $\alpha\ge16$, the integral remainders for $\log(1+s)$ and $\arctan t$ give
\begin{equation}\label{eq:profile-local-remainders}
 0\le R_\alpha\le r_\alpha,\quad |R_\alpha-r_\alpha|\le\frac{\rho^6}{6\alpha^2},\quad
 |\phi-\vartheta_\alpha|\le\frac{\rho^5}{5\alpha^{3/2}},\quad
 |\phi|\le\frac{\rho^3}{3\sqrt\alpha}.
\end{equation}
For example, integrate the remainder in $1/(1+s)=1-s+s^2/(1+s)$ to obtain the logarithmic estimate, and in $1/(1+t^2)=1-t^2+t^4/(1+t^2)$ to obtain the phase estimate. Taylor's theorem then gives
\begin{align}\label{eq:profile-cos-exp-remainders}
 \left|\cos\phi-1+\frac{\vartheta_\alpha^2}{2}\right|
 &\le\frac1{\alpha^2}\left(\frac{\rho^8}{15}+\frac{\rho^{12}}{1944}\right),\\
 |e^{R_\alpha}-1-r_\alpha|&\le\frac1{\alpha^2}
       \left(\frac{\rho^6}{6}+\frac{\rho^8e^{R_\alpha}}{32}\right).
\end{align}
The first inequality uses $|\phi^2-\vartheta_\alpha^2|/2\le\rho^8/(15\alpha^2)$ and $|\cos\phi-1+\phi^2/2|\le\phi^4/24$. The second uses $|e^{R_\alpha}-1-R_\alpha|\le R_\alpha^2e^{R_\alpha}/2$. Since $r_\alpha\le1/4$ and $r_\alpha\le\rho^2/12$ on this region, multiplication of the two expansions gives
\begin{equation}\label{eq:profile-integrable-remainder}
 \left|\Re W_\alpha-e^{-\rho^2/2}
       \left(1+\frac{S_4(z)}{4\alpha}
                  -\frac{S_3(z)^2}{18\alpha}\right)\right|
 \le \frac{C}{\alpha^2}
      (\rho^6+\rho^8+\rho^{10}+\rho^{12})e^{-\rho^2/3}
\end{equation}
with an absolute constant $C$. In particular its integral is $O_n(\alpha^{-2})$.

For $\alpha^{1/20}<\rho\le\sqrt\alpha$, the envelope from \eqref{eq:a-envelope} satisfies $K_\alpha(z)\le e^{-\rho^2/4}$, because $\log(1+s)\ge s/2$ for $0\le s\le1$. The comparison terms are a fixed polynomial in $\rho$ times a Gaussian. Their integrals beyond $\alpha^{1/20}$ are $O_n(\alpha^{-L})$ for every fixed $L$: absorb the polynomial into half of the Gaussian and use $e^{-c\alpha^{1/10}}=O(\alpha^{-L})$. Finally, for $\alpha>2n$, polar coordinates and $1+t^2\ge2t$ for $t\ge1$ give
\begin{equation}\label{eq:profile-outer-tail}
 \int_{\rho\ge\sqrt\alpha}K_\alpha(z)\,dz
 \le |S^{n-1}|\alpha^{n/2}
          \frac{2^{-\alpha/2}}{\alpha/2-n}.
\end{equation}
The corresponding Gaussian tail also decays exponentially. These three regions prove the $O(\alpha^{-2})$ integrated remainder.

For a standard Euclidean Gaussian $Z$, the three fourth-order pairings give $\E S_4(Z)=3A$. Among the fifteen pairings of the six factors in $S_3(Z)^2$, six connect all three indices across the two cubic tensors and nine have one internal pair in each tensor. Therefore
\begin{equation}\label{eq:profile-wick}
 \E S_4(Z)=3A,\qquad \E S_3(Z)^2=6N+9\|\zeta\|_e^2.
\end{equation}
Thus $a_1(e)=3A/4-N/3-\|\zeta\|_e^2/2$. Using $\mu_e=3A-2N-\|\zeta\|_e^2$ proves \eqref{eq:n-8-11}. Nonnegativity and Theorem~\ref{thm:n-4-1} give the equality statement. \end{proof}

\section{Examples}\label{sec:examples}
\subsection{Lorentz determinants}

For $p(t,z)=t^2-\|z\|^2$ and $e=(1,0)$, one has \(H_e=2I\) and \(\mathcal D_e=0\).

Take $v_i=e_i/\sqrt2$. The nonzero cubic structure constants are $c_{000}=1/\sqrt2$ and $c_{0ii}=c_{i0i}=c_{ii0}=1/\sqrt2$ for $1\le i<n$. Thus $N=(3n-2)/2$ and $\|\zeta\|_e^2=n^2/2$. Equation \eqref{eq:n-8-11} gives
\begin{equation}\label{eq:n-9-1}
\frac{q_\alpha(2\alpha e)}{Q_\alpha(2\alpha e)}
=
1-\frac{3n^2-6n+4}{24\alpha}
+O_n(\alpha^{-2}).
\end{equation}
This recovers the Stirling coefficient in Section~\ref{subsec:lorentz}. For positive exponents, complete monotonicity holds exactly when $\alpha\ge(n-2)/2$.

\subsection{Third-order information is insufficient}

Consider the polynomials $p_0(t,x,y)=(t^2-x^2-y^2)^2$ and $p_1(t,x,y)=t^2(t^2-2x^2-2y^2)$. At $e=(1,0,0)$, both have $H_e=4I$ and their logarithmic derivatives agree through order three. Indeed, with \(u=(x^2+y^2)/t^2\), $\log p_0=4\log t-2u-u^2+O(u^3)$, $\log p_1=4\log t-2u-2u^2+O(u^3)$.
Nevertheless,
\begin{equation}\label{eq:n-9-2}
\mathcal D_e^{p_0}(a,b,c)=0,
\qquad
\mathcal D_e^{p_1}(a,b,c)=4(b^2+c^2)^2.
\end{equation}
Thus even a Jordan multiplication obtained from the third derivative does not identify the original polynomial without fourth-order compatibility.

For rational \(p,e\), the test can be made using one rational scalar. Let
\[
\Delta_H
=
\sum_{i,j}(H_e^{-1})_{ij}
\frac{\partial^2}{\partial v_i\partial v_j}.
\]
For the homogeneous quartic \(\mathcal D_e\),
\begin{equation}\label{eq:n-9-3}
\mathbb E\mathcal D_e(G_e)=\frac18\Delta_H^2\mathcal D_e.
\end{equation}
The fourth Gaussian moment formula gives \eqref{eq:n-9-3}: the three contractions of a symmetric fourth-order tensor coincide, while applying $\Delta_H$ twice contributes the factor $24$. For complete hyperbolic $p$, vanishing of this rational scalar is equivalent to the Jordan product form.

\subsection{The dual Vinberg cone}
We use the naming convention of Ishi \cite[Section~5.2]{Ishi}: the sparse positive definite matrix cone below is the dual of the five-dimensional Vinberg cone \cite{Vinberg}. Multiplicative Legendre transforms of generalized powers on homogeneous cones are studied in \cite[Theorem~5 and Proposition~1]{Ishi2016}.

Let
\begin{equation}\label{eq:n-9-4}
f(a,b,c,x,y)
=
abc-bx^2-ay^2
=
\det
\begin{pmatrix}
a&0&x\\
0&b&y\\
x&y&c
\end{pmatrix}.
\end{equation}
Its complete hyperbolicity cone is $C_f=\{a>0,\ b>0,\ c>x^2/a+y^2/b\}$. The determinant is hyperbolic at the identity matrix, and the displayed cone is its positive definite component by the Schur complement. At the identity its logarithmic Hessian on this five-dimensional matrix space is the positive trace form, so the cone is pointed. Irreducibility follows from Gauss's lemma: as a polynomial in $c$ it is primitive and linear over $\R(a,b,x,y)$, because $ab$ and $bx^2+ay^2$ are coprime.

Use the dual pairing $aA+bB+cC+xX+yY$, and put $\widehat A=A-X^2/(4C)$ and $\widehat B=B-Y^2/(4C)$. Write $c=s+x^2/a+y^2/b$ with $s>0$ in the primal cone. Minimizing the pairing in $x,y$, we obtain
\[
 \begin{aligned}
 aA+bB+cC+xX+yY
 &=Cs+a\widehat A+b\widehat B\\
 &\quad+\frac C a\left(x+\frac{aX}{2C}\right)^2
  +\frac C b\left(y+\frac{bY}{2C}\right)^2.
\end{aligned}
\]
Consequently the dual interior is
$D_f=\{C>0,\ \widehat A>0,\ \widehat B>0\}$.

The inverse logarithmic gradient is
\begin{align*}
 a&=\widehat A^{-1}, & b&=\widehat B^{-1},\\
 x&=-\frac{aX}{2C}, & y&=-\frac{bY}{2C},
 & c&=\frac1C+\frac{x^2}{a}+\frac{y^2}{b}.
\end{align*}
Therefore
\begin{equation}\label{eq:n-9-5}
f_*
=
C\widehat A\widehat B
=
\frac{(AC-X^2/4)(BC-Y^2/4)}{C}.
\end{equation}
The denominator does not cancel. For $\alpha>1/2$, its positive Riesz density is
\begin{equation}\label{eq:n-9-6}
q_\alpha
=
\frac{
C^{\alpha-2}
\widehat A^{\alpha-3/2}
\widehat B^{\alpha-3/2}
}{
4\pi\Gamma(\alpha)\Gamma(\alpha-1/2)^2
}
\end{equation}
on \(D_f\).

To verify \eqref{eq:n-9-6}, the change from $(A,B,C,X,Y)$ to $(\widehat A,\widehat B,C,X,Y)$ has Jacobian one. All integrands are nonnegative, so Tonelli's theorem permits us to integrate in \(\widehat A,\widehat B\), then in \(X,Y\) by Gaussian integration, and finally in \(C\). The resulting factors are $\Gamma(\alpha-1/2)^2 a^{-\alpha+1/2}b^{-\alpha+1/2}$, \[
\frac{4\pi C}{\sqrt{ab}}
\exp\!\left(C\left(\frac{x^2}{a}+\frac{y^2}{b}\right)\right),
\]
and the Gamma integral with shape \(\alpha\) and rate
\(c-x^2/a-y^2/b\). Their product is \(f^{-\alpha}\).
Thus
\begin{equation}\label{eq:n-9-7}
q_\alpha
=
\text{constant}_\alpha\,
f_*^{\alpha-3/2}C^{-1/2},
\end{equation}
so the ratio $q_\alpha/f_*^{\alpha-5/3}$ is a constant multiple of $f_*^{1/6}C^{-1/2}$ and is not constant, in accordance with Theorem~\ref{thm:n-8-1}.

\subsection{A factor invisible on the boundary}

Let $P=abf$. The full cone remains $C_f$. On its closure, $a=0$ forces $x=0$ and $b=0$ forces $y=0$, so these linear factors meet the boundary in ambient codimension at least two and are not boundary-visible. Differentiating $\log P=2\log a+2\log b+\log(c-x^2/a-y^2/b)$ and solving its gradient equations gives
\begin{align}
 P_*&= \frac{(AC-X^2/4)^2(BC-Y^2/4)^2}{16C^3},\label{eq:n-9-8}\\
 \det D^2P&=64a^6b^6f,\label{eq:n-9-9}
\end{align}
and
\begin{equation}\label{eq:n-9-10}
(H_P^{-1})_{cc}
=
c^2-\frac{x^4}{2a^2}
-\frac{y^4}{2b^2}
-\frac{2x^2y^2}{ab}.
\end{equation}
Thus all factors of the ordinary Hessian determinant come from \(P\), while the inverse logarithmic Hessian still has poles. This disproves the unrestricted implication from Hessian divisibility to polynomial inverse Hessian.

We do not know whether condition~1 of Theorem~\ref{thm:n-7-4} implies condition~4 without the boundary-visibility hypothesis; in the example above both fail, since $P_*$ in \eqref{eq:n-9-8} is not a polynomial.

\appendix
\section{Factor obstructions to polynomial duality}\label{sec:factor-obstructions}
The multiplicative Legendre identities also impose restrictions on individual factors over $\C$. We record two such restrictions.

Let \(P\) be homogeneous of degree \(M\) on a complex vector space \(V\), with generically nonsingular logarithmic Hessian, and suppose its multiplicative Legendre transform \(Q\) is polynomial.

For an irreducible homogeneous \(h\), the projective dual of $X=\{h=0\}\subset\mathbb P(V)$ is the closure of the tangent hyperplanes to the smooth locus of \(X\). We use projective biduality and its correspondence between smooth tangent points \cite[Theorem~1.7, arXiv version]{Tevelev}. 

\begin{theorem}\label{thm:n-A-1}
Assume \(\dim V\ge3\). Suppose an irreducible factor \(h\) of \(P\) has degree \(d\ge2\), and the projective dual of \(\{h=0\}\) is a hypersurface in \(\mathbb P(V^*)\). Then \(h\) is a nondegenerate quadratic form on \(V\), and $P=ch^a$ with $c\ne0$ and $a\in\mathbb Z_{>0}$. \end{theorem}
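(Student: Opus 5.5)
The plan is to compare vanishing orders along $X=\{h=0\}$ and along its projective dual $X^\vee$. Let $a$ be the multiplicity of $h$ in $P$, and let $n=\dim V$. Over $\C$, the identities \eqref{eq:n-7-1} remain valid as polynomial identities, because the multiplicative Legendre identities of Proposition~\ref{thm:n-2-3} are algebraic and hold on a Zariski-dense set:
\[
Q(\nabla P)=P^{M-1},\qquad \nabla Q(\nabla P)=P^{M-2}x .
\]
Moreover $(Q)_*=P$, so every argument applies symmetrically with the roles of $P$ and $Q$ exchanged.

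\textbf{Step 1: $Q$ vanishes on $X^\vee$ to order at least $M-a$.} Write $P=h^aR$ with $h\nmid R$. Then
\[
\nabla P=h^{a-1}\bigl(aR\nabla h+h\nabla R\bigr).
\]
Since $\nabla Q$ is homogeneous of degree $M-1$, the second identity becomes
\[
h^{(a-1)(M-1)}\,\nabla Q\bigl(aR\nabla h+h\nabla R\bigr)=h^{a(M-2)}R^{M-2}x .
\]
Comparing $h$-adic orders, $\nabla Q(aR\nabla h+h\nabla R)$ vanishes to order exactly $k=M-1-a$ along $X$, and its leading coefficient is proportional to $x\neq0$. In particular $k\ge0$, so $a\le M-1$.

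Now restrict to a curve $x(t)$ meeting $X$ transversally at a generic smooth point $x_0$ with $R(x_0)\ne0$. The curve $y(t)=aR\nabla h+h\nabla R$ passes at $t=0$ through $w=aR(x_0)\nabla h(x_0)$, which represents a point of $X^\vee$. Since $X^\vee$ is a hypersurface, the Gauss map $x_0\mapsto[\nabla h(x_0)]$ is dominant onto $X^\vee$. Varying $x_0$ and the transversal direction, these curves sweep out transversal directions at generic points of the cone over $X^\vee$. Hence $\nabla Q$ vanishes to order $\ge k$ generically along the cone over $X^\vee$, and therefore so does $Q$ to order $\ge k+1=M-a$.

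Let $g$ be the irreducible equation of $X^\vee$, of degree $e$, and let $b$ be its multiplicity in $Q$. The conclusion of Step 1 is
\[
b\ge M-a .
\]

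\textbf{Step 2: symmetric bounds and counting.} Biduality \cite{Tevelev} gives $(X^\vee)^\vee=X$, which is again a hypersurface. Running Step 1 for $Q$ and its factor $g$ therefore gives $a\ge M-b$, which is the same inequality, so $a+b\ge M$.

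Next, $e\ge2$. If $e=1$, then $X^\vee$ would be a hyperplane, and its dual would be a point rather than the hypersurface $X$; this is impossible because $\dim V\ge3$. Since also $d\ge2$, the degree bounds $da\le\deg P=M$ and $eb\le\deg Q=M$ give
\[
2(a+b)\le da+eb\le 2M .
\]
Together with $a+b\ge M$, all these inequalities are equalities. Hence $d=e=2$ and $da=M$, so $h^a$ already has degree $M$ and $P=ch^a$.

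Finally, $h$ is a quadric whose dual is a hypersurface. A degenerate quadric is a cone with a nontrivial vertex, and its dual lies in the proper linear subspace annihilating that vertex. So $X^\vee$ being a hypersurface forces $h$ to be nondegenerate.

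\textbf{Main obstacle.} The delicate point is Step 1, namely upgrading the order-of-vanishing information along the particular curves $y(t)$ to a genuine multiplicity of $g$ in $Q$. I would handle this through the dominance of the Gauss map together with a local computation in coordinates adapted to a generic smooth point $w$ of $X^\vee$ where $g$ is a local coordinate. There one checks that the curves $y(t)$ cross $\{g=0\}$ transversally: the derivative $\partial_t y(0)$ pairs nontrivially with the tangent-hyperplane normal, which by biduality is proportional to $x_0$. Then $\operatorname{ord}_t Q(y(t))=\operatorname{ord}_g Q$ at generic points, which gives the required equality of orders.
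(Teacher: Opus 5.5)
Your proposal is correct and is essentially the paper's proof. Your $y=aR\nabla h+h\nabla R$ is $R$ times the paper's $Z=h\,D\log P$, and both arguments compare $\operatorname{ord}_{h^\vee}Q$ with $M-a$ along the Gauss image, use biduality to identify the tangent hyperplane of the dual at $y(x_0)$ with $x_0$, and close with the same degree count using $d\ge2$ and $\deg h^\vee\ge2$. Two details need filling in. The transversality you leave as ``one checks'' is exactly the paper's key computation: differentiating $\langle x,y(x)\rangle=M\,h(x)R(x)$ gives $\langle x_0,\partial_t y(0)\rangle=(M-a)R(x_0)\,Dh(x_0)[x'(0)]\neq0$, since $a\le M-1$. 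Also, reading off $Q(y(x))=h^{M-a}R^{M-1}$ directly from $Q(\nabla P)=P^{M-1}$ gives $b=M-a$ at once. This avoids your passage from $\nabla Q$ to $Q$, which as written does not yield $b\ge1$ when $k=0$.
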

\begin{proof}
Write $P=h^ar$ with $h\nmid r$. On \(r\ne0\), define
\begin{equation}\label{eq:n-A-2}
Z(x)=h(x)D\log P(x)
=
a\,Dh(x)+h(x)\frac{Dr(x)}{r(x)}.
\end{equation}
It is regular across \(h=0\).

Since \(Q\) has degree \(M\),
\begin{equation}\label{eq:n-A-3}
Q(Z(x))=\frac{h(x)^{M-a}}{r(x)},
\qquad
\langle x,Z(x)\rangle=Mh(x).
\end{equation}

Let \(h^\vee\) define the projective dual, and let
\(\delta=\deg h^\vee\). Choose a sufficiently general nonzero smooth point \(x_0\) of \(h=0\), with \(r(x_0)\ne0\), whose Gauss image is smooth on \(h^\vee=0\).

Put $n=\dim V$ and $z_0=aDh(x_0)$. Since both projective varieties are hypersurfaces, biduality makes their Gauss maps inverse on dense open sets of smooth points. Their differentials therefore have rank $n-2$ there. On $T_{x_0}\{h=0\}$ we have $DZ=aD^2h$, because the remaining terms contain either $h$ or $Dh[v]$. The radial derivative satisfies $D(Dh)_{x_0}x_0=(d-1)Dh(x_0)\ne0$. Thus \(DZ_{x_0}\) restricted to \(T_{x_0}\{h=0\}\) has rank \(n-1\), with image equal to the tangent space of the affine dual hypersurface.

Differentiating the second identity in \eqref{eq:n-A-3} at \(x_0\) gives $\langle x_0,DZ_{x_0}v\rangle = (M-a)Dh(x_0)[v]$. The covector \(x_0\) annihilates the tangent space of the dual hypersurface. Since $M\ge ad\ge2a$, a transverse vector is mapped outside that tangent space. Therefore \(DZ_{x_0}\) is invertible.

The complex inverse function theorem now makes $Z$ a local biholomorphism. Choose $h^\vee$ reduced. Because $Z$ carries the smooth hypersurface $h=0$ to $h^\vee=0$ with invertible differential, locally $h^\vee(Z(x))=u(x)h(x)$ with $u(x_0)\ne0$. Factor $Q=(h^\vee)^bq$ and choose the point generally enough that $q(z_0)\ne0$. Then $Q(Z(x))=h(x)^b$ times a holomorphic unit. Comparing with \eqref{eq:n-A-3}, where $r(x_0)\ne0$, gives $b=M-a$; equivalently, $\operatorname{ord}_{h^\vee}Q=M-a$. Hence $\delta(M-a)\le M$ and $ad\le M$. By biduality, \(\delta\ge2\): if the dual were a hyperplane, its dual would be a point, not the nonlinear hypersurface \(X\). Therefore $2a\le ad\le M\le2a$. All inequalities are equalities: $d=\delta=2$, $M=2a$, and $\deg r=0$. If the quadratic form had rank $s<n$, its projective Gauss image would have dimension $s-2<n-2$, so it could not be a hypersurface in the full dual projective space. Hence it is nondegenerate. \end{proof}

In particular, when $\dim V\ge3$ and $P$ has an irreducible factor of degree at least three whose projective dual is a hypersurface, $P_*$ is not a polynomial, whatever the remaining factors are.

\begin{proposition}[Interior linear factors]\label{thm:n-A-2}
Return to complete real hyperbolic \(p\). Suppose $p=\ell^ar$ with $\ell\nmid r$ and $\deg r>0$, and the linear functional \(\ell\) lies in \(C^\vee=\operatorname{int}K\). Then \(p_*\) is not polynomial.
\end{proposition}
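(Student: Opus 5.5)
The plan is to adapt the regularized logarithmic gradient from the proof of Theorem~\ref{thm:n-A-1} to the linear factor $\ell$, and then evaluate the resulting identity on the hyperplane $\ell=0$. There the regularized gradient becomes the constant covector $a\ell$. Because $\ell$ lies in $C^\vee$, where $p_*$ is strictly positive, this evaluation produces a contradiction. Suppose, to the contrary, that $q=p_*$ is a polynomial; by Proposition~\ref{thm:n-2-3} it is homogeneous of degree $m$.

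\emph{Step 1: the regularized gradient.} On $\{r\ne0\}$ define
\[
 Z(x)=\ell(x)\,D\log p(x)=a\,\ell+\ell(x)\frac{Dr(x)}{r(x)}.
\]
This is a rational map whose denominator is a power of $r$, so it is regular wherever $r\ne0$, including across $\ell=0$. For $x\in C$ we have $\ell(x)>0$. Degree-$m$ homogeneity of $p_*$ and the identity $p_*(\tau_p(x))=p(x)^{-1}$ then give
\[
 q(Z(x))=\ell(x)^m\,p_*(\tau_p(x))=\frac{\ell(x)^m}{\ell(x)^a r(x)}=\frac{\ell(x)^{m-a}}{r(x)}.
\]

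\emph{Step 2: extension to the hyperplane.} Both sides of the last identity are rational functions that agree on the open set $C$. They therefore agree as rational functions, and hence pointwise on $\{r\ne0\}$. Since $\ell\nmid r$, the restriction of $r$ to the real hyperplane $\ker\ell$ is a nonzero polynomial, so there is a real point $x_0$ with $\ell(x_0)=0$ and $r(x_0)\ne0$. At $x_0$ the map $Z$ is regular and takes the value $Z(x_0)=a\ell$. The right-hand side equals $0$ there, because $m-a=\deg r>0$. We conclude that $q(a\ell)=0$.

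\emph{Step 3: contradiction.} Since $\ell\in C^\vee$ and $a>0$, the covector $a\ell$ also lies in $C^\vee$. Proposition~\ref{thm:n-2-3} says $p_*$ is positive on $C^\vee$, so $q(a\ell)=p_*(a\ell)>0$, contradicting Step 2. Hence $p_*$ is not a polynomial.

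The only delicate point is Step 2, namely justifying the passage from the identity on the cone to the boundary hyperplane. Note that $\ker\ell$ meets $\overline C$ only at $0$, precisely because $\ell\in\operatorname{int}K$; this is what distinguishes the present case from a boundary-visible factor. The argument therefore needs the identity as an equality of rational functions off $\{r=0\}$, which follows from agreement on an open set, together with the choice of a real point of $\ker\ell$ avoiding $\{r=0\}$. Both of these are routine, so I expect no serious obstacle.
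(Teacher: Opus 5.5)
Your proposal is correct and is essentially the paper's own proof. The paper also takes the regularized gradient $Z=\ell\,D\log p$ from the proof of Theorem~\ref{thm:n-A-1} with $h=\ell$. It extends the identity $p_*(Z)=\ell^{m-a}/r$ off the cone by clearing denominators, evaluates at a real point of $\ker\ell$ where $r\neq0$ to obtain $p_*(a\ell)=0$, and contradicts positivity of $p_*$ on $C^\vee$; you simply write out the derivation of that identity and the rational-continuation step in more detail.
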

\begin{proof}
If $p_*$ were polynomial, clearing denominators in the Legendre identity on $C$ would give a polynomial identity on the whole real vector space. Since $\ell\nmid r$, the restriction of $r$ to the real hyperplane $\ell=0$ is a nonzero polynomial, so it is nonzero at some real point $x_0$ of that hyperplane. Evaluate \eqref{eq:n-A-3} there with $h=\ell$. Then $Z(x_0)=a\ell$ and $M-a=\deg r>0$, giving $p_*(a\ell)=0$. But \(a\ell\in C^\vee\), where \(p_*>0\), a contradiction. \end{proof}

For a concrete boundary comparison, let
\[
p(x,y,z)=(xz-y^2)^a x^b,
\qquad a\ge1,\quad b\ge0,
\]
and set \(\Delta=AC-B^2/4\). Then
\begin{equation}\label{eq:n-A-6}
p_*(A,B,C)
=
\frac{\Delta^{a+b}}
{a^a(a+b)^{a+b}C^b}.
\end{equation}
Its inverse logarithmic gradient is
\[
x=\frac{(a+b)C}{\Delta},\qquad
y=-\frac{(a+b)B}{2\Delta},\qquad
z=\frac aC+\frac{(a+b)B^2}{4C\Delta}.
\]
These give $xz-y^2=a(a+b)/\Delta$, and verify \eqref{eq:n-A-6}. When \(b>0\), the denominator cannot cancel because
\(\Delta|_{C=0}=-B^2/4\).

\Needspace{10\baselineskip}
\section*{Use of AI tools}

This paper was produced with substantial use of AI language models. Candidate arguments, including the saddle-point proof of Theorem~\ref{thm:main}, the results on admissible exponents, and the Jordan rigidity and duality theory, were generated by GPT-6 Pro over several rounds directed by the author, who set the targets, identified the Jordan reconstruction and its role, rejected unsupported or overstated claims, and decided what to keep. The same model produced self-checks of its own output, and literature searches were assisted by GPT-5.6 Sol. The author determined the organization of the paper, including the merger of the positivity and rigidity results into a single work, and directed the rewriting of the manuscript.

AI-assisted step-by-step checking of the proofs and numerical constants, literature and priority cross-checks (including the identification of the question of Scott and Sokal answered here), and critical assessment of the claims, their framing and the exposition were carried out by Claude (Fable 5.1 and Opus 5.5) and by GPT Astra within Codex. The latter also assembled the LaTeX manuscript and wrote and ran the exact-arithmetic verification scripts.

All AI-generated text was reviewed and approved by the author. The author has checked the statements and proofs, is responsible for the mathematics, the citations and the final text, and alone made the decision to submit.

\section*{Data availability}
The LaTeX source and Python scripts for the scalar and exact-algebra calculations are included in the accompanying supplementary material.

\end{document}